\numberwithin{equation}{section}
\newtheorem{thm}{Theorem}[section]
\newtheorem{prop}[thm]{Proposition}
\newtheorem{lem}[thm]{Lemma}
\newtheorem{rem}[thm]{Remark}
\newtheorem{cor}[thm]{Corollary}
\newtheorem{dfn}[thm]{Definition}
\newcommand{\nn}{\nonumber}
\theoremstyle{definition}
\newtheorem{example}[thm]{Example}
\newcommand{\ds}[1]{\displaystyle #1}
\newcommand{\C}{{\mathbb C}}
\newcommand{\Z}{{\mathbb Z}}
\newcommand{\Q}{{\mathbb Q}}
\newcommand{\cA}{{\mathcal A}}
\newcommand{\cB}{{\mathcal B}}
\newcommand{\B}{{\mathcal B}}
\newcommand{\E}{{\mathcal E}}
\newcommand{\F}{\mathcal F}
\newcommand{\cR}{\mathcal{R}}
\newcommand{\bs}{\boldsymbol}
\newcommand{\gl}{\mathfrak{gl}}
\newcommand{\la}{\lambda}
\newcommand{\bk}{{\mathbf{k}}}
\newcommand{\bfs}{{\mathbf{s}}}
\newcommand{\bM}{{\mathbf{M}}}
\newcommand{\bN}{{\mathbf{N}}}
\newcommand{\ssq}{\textsf{q}}
\newcommand{\id}{{\rm id}}
\newcommand{\Ker}{\mathop{\rm Ker}}
\newcommand{\sgn}{\mathop{\rm sgn}}
\newcommand{\Tr}{{\rm Tr}}
\newcommand{\on}{\operatorname}
\newcommand{\mc}{\mathcal}
\newcommand{\al}{\alpha}
\begin{document}

\begin{title}[Commutative subalgebra of $\gl_{m|n}$ shuffle algebra]
{Commutative subalgebra of a shuffle algebra 
associated with quantum toroidal $\gl_{m|n}$}
\end{title}
\author{B. Feigin, M. Jimbo, and E. Mukhin}

\address{BF: National Research University Higher School of Economics,  101000, Myasnitskaya ul. 20, Moscow,  Russia, and Landau Institute for Theoretical Physics, 142432, pr. Akademika Semenova 1a, Chernogolovka, Russia
}
\email{bfeigin@gmail.com}
\address{MJ: 
Professor Emeritus,
Rikkyo University, Toshima-ku, Tokyo 171-8501, Japan}
\email{jimbomm@rikkyo.ac.jp}
\address{EM: Department of Mathematics,
Indiana University Purdue University Indianapolis,
402 N. Blackford St., LD 270, 
Indianapolis, IN 46202, USA}
\email{emukhin@iupui.edu}


\begin{abstract} 
We define and study the shuffle algebra $Sh_{m|n}$ of the quantum toroidal algebra $\mc E_{m|n}$ associated to Lie superalgebra $\gl_{m|n}$. We show that $Sh_{m|n}$ contains a family  of commutative subalgebras $\mc B_{m|n}(s)$ depending on parameters $s=(s_1,\dots,s_{m+n})$, $\prod_i s_i=1$, given by appropriate regularity conditions. We show that $\mc B_{m|n}(s)$ is a free polynomial algebra and give explicit generators which conjecturally correspond to the traces of the $s$-weighted $R$-matrix computed on the degree zero part of $\mc E_{m|n}$ modules of levels $\pm 1$.

\end{abstract}

\keywords{shuffle algebra, integrals of motion, Bethe algebra, 
quantum toroidal $\gl_{m|n}$}

\maketitle

\section{Introduction}
We continue the study of integrable systems of XXZ-type related to quantum toroidal algebras, see \cite{FHHSY}, \cite{FT},
\cite{FJMM2}, \cite{FJMM3}, \cite{FJM1}-\cite{FJM4}, \cite{FJMV}. 
In this paper we make the first steps in the case of quantum toroidal algebra $\mc E_{m|n}$ of type $\gl_{m|n}$.

The quantum toroidal algebra $\mc E_{m|n}$ of type $\gl_{m|n}$ was introduced and studied in \cite{BM1}, \cite{BM2}. The algebra $\mc E_{m|n}$ depends on parameters $q_1,q_2,q_3$, $q_1q_2q_3=1$, and has several presentations depending on the choice of the Dynkin diagram for $\gl_{m|n}$. In this paper we use only the standard parity. It is expected that $\mc E_{m|n}$
has a universal  $R$-matrix 
and therefore one could construct a quantum integrable system generated by the transfer matrices. 

In the even case, (that is in the case of $n=0$), the corresponding integrals of motion can be written explicitly  in the form of multidimensional contour integrals, \cite{FJM3}. These formulas can be used to find interesting generalizations, see \cite{FJMV}, \cite{FJM4}, to prove dualities, \cite{FJM2}, to look for connections to other models, \cite{FJMV}.
 The spectrum of integrals of motion is known to be given in terms of the Bethe ansatz, \cite{FJMM1}, \cite{FJMM2}, \cite{AO}. 
All of that is missing so far in the supersymmetric case.

In this paper we take the shuffle algebra approach developed in the even case  in \cite{FT}.  The shuffle algebra is a remarkable realization of the nilpotent part of a quantum group by spaces of rational functions symmetric in several groups of variables with explicit poles and satisfying explicit vanishing wheel conditions, see \cite{N1}, \cite{N2},
\cite{FT}, \cite{T1}, \cite{T2}, 
Sections \ref{ap1 sec} and \ref{sec:shflm}. The product of such  functions in the shuffle algebra is given by the shuffle star product, see \eqref{shfl}. The exchange functions in the star product come from the quadratic relations, and the wheel conditions from the Serre relations.

We introduce the shuffle algebra $Sh_{m|n}$ associated to $\mc E_{m|n}$. The definition $Sh_{m|n}$ is rather standard. However, the isomorphism of the shuffle algebra and the nilpotent part of $\mc E_{m|n}$ is not trivial, and we do not pursue it here, see \cite{N1}, \cite{N2}  for the even case.  Instead, we find a family of commutative subalgebras $\mc B_{m|n}(s)$ of $\mc E_{m|n}$ depending on parameters $s=(s_1,\dots,s_{m+n})$, $s_1s_2\cdots s_{m+n}=1$,
which we call the {\it Bethe algebras}. 
We use two main ideas. First, it is expected that the Bethe algebra
is characterized by 
the ratio of two limits when some of the variables in the rational function $F\in Sh_{m|n}$ go to zero and when the same variables go to infinity, see \eqref{ratio of limits}. Second, we can compute a few of elements of the Bethe algebra explicitly using the trace of the hypothetical $R$-matrix, see the Appendix. This happens to be sufficient to conjecture the answer, and then we prove it.

Our arguments often follow the even case of \cite{FT}.  However, there are several technical differences worth mentioning. 

First, in the bosonic realization of the $\mc E_{m|n}$ level one modules, part of the generators $E_i(z)$ act not as a single vertex operator but as a sum of two, see \cite{BM1}. As the result, in contrast to the even case, the matrix coefficients of the $R$-matrix do not factorize.  Instead they are given in terms of products of functions $I^c_{M,N}$ of two groups of variables:
\begin{align*}
 I^c_{M,N}(y_1,\dots,y_M;z_1,\dots,z_N)=   \frac{1}{\prod_{i<j}(y_i-y_j)}
\prod_{a=1}^M\Bigl(\frac{q^cT_{q,y_a}-q^{-c}T^{-1}_{q,y_a}}{q-q^{-1}}\Bigr)
\Bigl(\frac{\prod_{i<j}(y_i-y_j)}{\prod_{i,j}(y_i-z_j)}
\Bigr)\,,
\end{align*}
where 
$(T_{q,y_a}f)(y_1,\ldots,y_M)=f(y_1,\ldots,q y_a,\ldots,y_M).$  The functions $I^c_{M,N}$ satisfy a number of identities and curious properties, see 
Lemmas \ref{lem:Ic-prop}--\ref{lem:Iwheel},
\eqref{identity}, and seem to be an important ingredient.

Second, in the proof we make use of the fusion homomorphism which maps the subalgebra of $Sh_{m|n}$ of weight zero to a subalgebra of weight zero in $Sh_{m|n-1}\otimes Sh_{1|0}$ (with different parameters $ q_1, q_2$), see Proposition \ref{prop:fusion-glmn}, cf. \cite{FJM1}. It allows us to use the induction on the rank of the algebra. This homomorphism is not present in \cite{FT}.

Third, unlike \cite{FT}, we are not able to show that our commutative subalgebra coincides with the algebra of transfer matrices restricted to degree zero. We do expect this to be true, see the Appendix.

Fourth, similar to \cite{FT}, we show that the Bethe algebra $\mc B_{m|n}$ is a free polynomial algebra with explicit generators which we expect to correspond to the first $m$ skew-symmetric powers of the vector representations and first $n$ skew-symmetric powers of the covector representations of $\gl_{m|n}$.

Finally, it is
desirable to give the elliptic deformation of $\mc B_{m|n}$ corresponding to the full trace of the $R$-matrix (not restricted to the degree zero) as in \cite{FT}  for the even case and translate this knowledge into explicit integrals of motion as in \cite{FJM3} in the even case.  

\medskip

Note also, that the case $m=n$ is excluded from our considerations. While many definitions make sense in that case too, the proofs seem to fall apart. This case was not treated in \cite{BM1}, \cite{BM2} either.

\medskip

The paper is constructed as follows. In Section \ref{sec:Shfl} we describe the generalities about the $\gl_{m|n}$ shuffle algebra. In Section \ref{fusion sec} we give the fusion homomorphism of shuffle algebras which is used later for the induction with respect to the rank. The main results of this section are 
Proposition \ref{prop:fusion-glmn} and Proposition \ref{prop:fusion_glm}. 
In Section \ref{Bethe sec} we define the Bethe subalgebra of the shuffle algebra. We prove an upper estimate for the size of the Bethe algebra, Proposition \ref{prop:Gordon}. In Section \ref{G functions sec}, we give explicit generators of the Bethe algebra. We finally prove that the Bethe algebra is commutative in Theorem \ref{prop:surj_glmn}. In the Appendix,  we give examples of computation with traces of $R$-matrices which illustrates the origin of our formulas.

\bigskip

\noindent{\it Notation.}\quad
Throughout the paper we fix $q,d\in\C^{\times}$ and set
\begin{align*}
q_1=q^{-1}d\,,\quad q_2=q^2\,,\quad q_3=q^{-1}d^{-1}\,.
\end{align*}
We have $q_1q_2q_3=1$. 
We assume that $q_1^{n_1}q_2^{n_2}q_3^{n_3}=1$, $n_1,n_2,n_3\in\Z$, holds
only if $n_1=n_2=n_3$. 
Fixing a branch of logarithm $\log q_i$ we set $q_i^r=e^{r\log q_i}$ for 
$r\in\Q$. 

If a statement $P$ is true we set $\theta(P)=1$, 
and $\theta(P)=0$ otherwise.

\section{Shuffle algebras}\label{sec:Shfl}

\subsection{Algebra $Sh_{m|n}$}\label{sec:shflmn}

Let $m,n$ be positive integers with $m\neq n$. 
In this section we define
the shuffle algebra $Sh_{m|n}$ for  the quantum toroidal algebra 
of type $\gl_{m|n}$. 

Let $Sh_{m|n}$ be a graded complex vector space:
\begin{align*}
&Sh_{m|n} =\bigoplus_{\bN \in\Z_{\ge0}^{m+n}}
Sh_{m|n,\bN}\,,
\end{align*}
where
each graded component $Sh_{m|n,\bN}$ 
of degree $\bN=(N_1,\ldots,N_{m+n})$
is the space of all rational functions $F(x_1;\cdots;x_{m+n})$ 
in the variables $x_i=(x_{i,1},\ldots,x_{i,N_i})$, $1\le i\le m+n$, 
satisfying the following conditions (i)--(iii) below. 

Here and after the index
$i$ of $x_i$ is to be read modulo $m+n$, i.e.
 $x_{i+m+n,a}=x_{i,a}$. 
We call this index the color of the variable. We call variables $x_{m,a}, x_{m+n,a}$ fermionic, all other variables bosonic. We say that a bosonic variabel $x_{i,a}$ is before the equator if $i=1,\dots,m-1$ and after the equator if $i=m+1,\dots,m+n-1$.

\begin{enumerate}
 \item $F$
 has the form 
\begin{align*}
F(x_1;\cdots;x_{m+n}) 
=\frac{f(x_1;\cdots;x_{m+n}) }{\prod_{i=1}^{m+n}
\prod_{1\le a \le N_i\atop 1\le b\le N_{i+1}}(x_{i,a}-x_{i+1,b})}\,,
\end{align*}
where $f(x_1;\cdots;x_{m+n})$ is a Laurent polynomial. 
\item $F$ is symmetric in bosonic variables $x_i$, $i\neq m,m+n$, and  
skew-symmetric in fermionic variables $x_{m}$, $x_{m+n}$.
\item $F$ vanishes if one of the following holds for some $a,b,c,d$:
\begin{enumerate}
 \item $x_{i,b}=q_3x_{i-1,a}$ and $x_{i,c}=q_2x_{i,b}$, 
$1\le i\le m-1$;
 \item 
$x_{i,b}=q_2x_{i,a}$ and $x_{i+1,c}=q_3x_{i,b}$, 
$1\le i\le m-1$;
 \item 
$x_{m,b}=q_3 x_{m-1,a}$, $x_{m+1,c}=q_3^{-1}x_{m,b}$, 
and $x_{m,d}=q_1^{-1}x_{m+1,c}$;
\item  
$x_{i,b}=q_3^{-1}x_{i-1,a}$ and $x_{i,c}=q_2^{-1}x_{i,b}$, 
$m+1\le i\le m+n-1$;
 \item 
$x_{i,b}=q_2^{-1}x_{i,a}$ and $x_{i+1,c}=q_3^{-1}x_{i,b}$, 
$m+1\le i\le m+n-1$;
 \item 
$x_{m+n,b}=q_3^{-1} x_{m+n-1,a}$, $x_{1,c}=q_3x_{m+n,b}$, 
and $x_{m+n,d}=q_1 x_{1,c}$.
\end{enumerate}
\end{enumerate}
We call (iii) {\it wheel conditions}. 
The wheel conditions are schematically shown in
Figure \ref{fig:wheel} below. In that figure, for simplicity, we omitted the second indices of variables.

Note that all cubic wheel conditions contain two bosonic variables of the same color. The quartic relations contain two fermionic variables of the same color.

Note also that, due to skew-symmetry, $F$ has obvious zeroes at 
\begin{align}
x_{m,a}=x_{m,b}\,\quad\text{or}\quad  x_{m+n,a}=x_{m+n,b} \,,
\quad a\neq b\,.
\label{trivialzero}
\end{align}

\bigskip

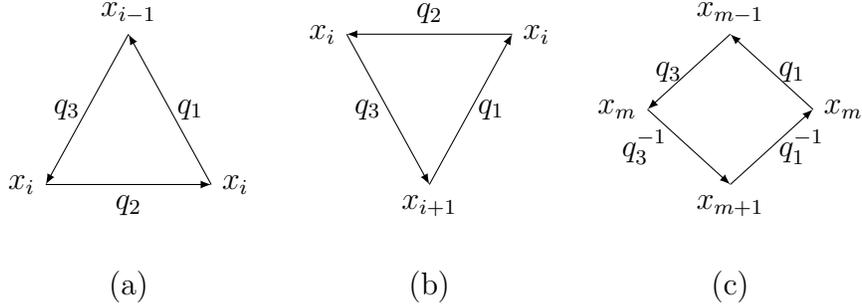
\begin{figure}[H]
\begin{align*}
\begin{tikzpicture}
\draw[-latex](-4,1)--(-5.1,-1);
\draw[-latex](-2.9,-1)--(-4,1);
\draw[-latex](-5.1,-1)--(-2.9,-1);
\draw(-4,1)node[above]{$x_{i-1}$};
\draw(-5.1,-1)node[left]{$x_i$};
\draw(-2.9,-1)node[right]{$x_i$};
\draw(-4.5,0)node[left]{$q_3$};
\draw(-4,-1)node[below]{$q_2$};
\draw(-3.5,0)node[right]{$q_1$};
\draw(-4,-2)node[below]{(a)};
\draw[-latex](-1.1,1)--(0,-1);
\draw[-latex](0,-1)--(1.1,1);
\draw[-latex](1.1,1)--(-1.1,1);
\draw(0,-1)node[below]{$x_{i+1}$};
\draw(-1.1,1)node[left]{$x_i$};
\draw(1.1,1)node[right]{$x_i$};
\draw(-0.5,0)node[left]{$q_3$};
\draw(0,1)node[above]{$q_2$};
\draw(0.5,0)node[right]{$q_1$};
\draw(0,-2)node[below]{(b)};
\draw[-latex](4,1)--(2.9,0);
\draw[-latex](2.9,0)--(4,-1);
\draw[-latex](4,-1)--(5.1,0);
\draw[-latex](5.1,0)--(4,1);
\draw(4,1)node[above]{$x_{m-1}$};
\draw(2.9,0)node[left]{$x_m$};
\draw(5.1,0)node[right]{$x_m$};
\draw(4,-1)node[below]{$x_{m+1}$};
\draw(3.5,0.5)node[left]{$q_3$};
\draw(3.3,-0.5)node[left]{$q_3^{-1}$};
\draw(4.5,0.5)node[right]{$q_1$};
\draw(4.5,-0.5)node[right]{$q_1^{-1}$};
\draw(4,-2)node[below]{(c)};
\end{tikzpicture}
\end{align*}
\caption{Wheel conditions (a), (b), (c). 
Here $i$ is bosonic before the equator.
Conditions (d), (e), (f) are obtained by reversing
all arrows. In these conditions $m$ is replaced with $m+n$ and $i$ is bosonic after the equator.
}\label{fig:wheel}
\end{figure}

Define {\it structure functions} $\omega_{i,j}(x,y)$ by
\begin{align*}
&\omega_{i,i}(x,y)=
\begin{cases}
\ds{\frac{x-q_2y}{x-y}}& \text{if $1\le i\le m-1$},\\		     
\ds{\frac{x-q_2^{-1}y}{x-y}}& \text{if $m+1\le i\le m+n-1$},\\		   
\end{cases} 
\\
&\omega_{i-1,i}(x,y)=
\begin{cases}
\ds{\frac{x-q_1y}{x-y}}& \text{if $1\le i\le m$},\\		     
\ds{\frac{x-q_1^{-1}y}{x-y}}& \text{if $m+1\le i\le m+n$},\\		     
\end{cases} 
\\
&\omega_{i,i-1}(x,y)=
\begin{cases}
\ds{d\frac{x-q_3y}{x-y}}& \text{if $1\le i\le m$},\\
\ds{d^{-1}
\frac{x-q_3^{-1}y}{x-y}}
& \text{if $m+1\le i\le m+n$}.\\		     
\end{cases} 
\end{align*}
We set $\omega_{i,j}(x,y)=1$ in all other cases.

For $F\in Sh_{m|n,\bM}$ and $G\in Sh_{m|n,\bN}$, we 
define their {\it shuffle product} $F*G$
by setting 
\begin{align}
&(F*G)(x_1;\cdots;x_{m+n})=\sum_{I_1,J_1}\cdots\sum_{I_{m+n},J_{m+n}} 
\sgn(I_m,J_m)\sgn(I_{m+n},J_{m+n})
\label{shfl}
\\
&\times F(x_{1,I_1};\cdots;x_{m+n,I_{m+n}})
 G(x_{1,J_1};\cdots;x_{m+n,J_{m+n}})
\prod_{i,j=1}^{m+n}\prod_{a\in I_i, b\in J_j}\omega_{i,j}(x_{i,a},x_{j,b})
\,.\nn
\end{align}
Here the sum is taken over all subsets $I_i,J_i$ of $\{1,\ldots,M_i+N_i\}$ 
such that
\begin{align*}
 I_i\sqcup J_i=\{1,\ldots,M_i+N_i\}\,,
\quad |I_i|=M_i\,,\ |J_i|=N_i\,,
\quad i=1,\ldots,m+n\,.
\end{align*}
For a subset 
$I=\{a_1,\ldots,a_k\}$, $a_1<\cdots<a_k$,
we write $x_{i,I}=(x_{i,a_1},\ldots,x_{i,a_k})$.
If $I=\{a_1,\ldots,a_{k}\}$, $a_1<\cdots<a_k$,
 and $J=\{b_1,\ldots,b_{l}\}$, $b_1<\cdots<b_l$, then $\sgn(I,J)$ stands for the signature of the permutation
$(a_1,\ldots,a_{k},b_1,\ldots,b_{l})$.

Then we have the following standard lemma.
\begin{lem}
 The space $Sh_{m|n}$ with the shuffle product is    an associative $\Z_{\ge0}^{m+n}$-graded algebra.
\end{lem}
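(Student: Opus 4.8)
The plan is to check that $F*G$ satisfies the three defining conditions (i)--(iii) whenever $F\in Sh_{m|n,\bM}$ and $G\in Sh_{m|n,\bN}$, and then to verify associativity; the grading is automatic, since every summand in \eqref{shfl} involves exactly $M_i+N_i$ variables of color $i$, so that $F*G\in Sh_{m|n,\bM+\bN}$. Conditions (i) and (ii) I would dispose of first. Symmetry in the bosonic colors $i\neq m,m+n$ is manifest, because permuting the variables of a fixed color merely permutes the subsets $I_i,J_i$ over which \eqref{shfl} sums; for the fermionic colors $m$ and $m+n$ the factors $\sgn(I_m,J_m)$ and $\sgn(I_{m+n},J_{m+n})$ supply precisely the signs making $F*G$ skew-symmetric, which also forces the trivial zeroes \eqref{trivialzero}. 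For the poles, I would count, for each adjacent pair of colors $(i,i+1)$, the $(M_i+N_i)(M_{i+1}+N_{i+1})$ required factors $(x_{i,a}-x_{i+1,b})$: $M_iM_{i+1}$ come from the denominator of $F$, $N_iN_{i+1}$ from that of $G$, and the remaining $M_iN_{i+1}+N_iM_{i+1}$ from the denominators of the cross structure functions $\omega_{i,i+1}$ and $\omega_{i+1,i}$. Each coincidence $x_{i,a}=x_{i+1,b}$ is produced by exactly one of these sources, so the poles are simple and of the shape demanded by (i).

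The one point requiring care in (i) is the spurious same-color pole at $x_{i,p}=x_{i,q}$ ($p\neq q$, $i$ bosonic) created by $\omega_{i,i}(x,y)=(x-q_2y)/(x-y)$ whenever $p$ and $q$ lie in different blocks. To see that it cancels, I would pair each summand with $p\in I_i$, $q\in J_i$ with the summand obtained by exchanging the memberships of $p$ and $q$ while freezing all other choices. In the first the relevant factor is $\omega_{i,i}(x_{i,p},x_{i,q})$, in the second $\omega_{i,i}(x_{i,q},x_{i,p})$; their residues at $x_{i,p}=x_{i,q}$ are $(1-q_2)x_{i,q}$ and $-(1-q_2)x_{i,q}$, and at the pole the two remaining products agree because $F$ and $G$ are symmetric and every other $\omega$ factor involving $p$ or $q$ coincides. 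Hence the residues cancel, $F*G$ has no same-color poles, and its numerator is a genuine Laurent polynomial, establishing (i). For the fermionic colors $\omega_{m,m}=\omega_{m+n,m+n}=1$, so no such poles arise at all.

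The wheel conditions (iii) are the heart of the matter and the step I expect to be hardest. Fixing one of the six configurations of Figure \ref{fig:wheel} — say type (a), with $u=x_{i-1,a}$, $v=x_{i,b}$, $w=x_{i,c}$ and $v=q_3u$, $w=q_2v$, so that also $u=q_1w$ — I would restrict $F*G$ to this locus and classify the summands of \eqref{shfl} by the distribution of $u,v,w$ between the $F$-block (the $I$'s) and the $G$-block (the $J$'s). If all three lie in the same block, that factor is evaluated on a type-(a) wheel and vanishes by (iii) for $F$ or for $G$. In every mixed distribution exactly one cross structure function joining an $F$-variable to a $G$-variable acquires a zero, because the numerators $x-q_1y$, $x-q_2y$, $x-q_3y$ of the $\omega_{i,j}$ are tuned to the arrow-labels $q_1,q_2,q_3$ of the wheel: for instance, if $w$ belongs to the $F$-block and $v$ to the $G$-block then $\omega_{i,i}(w,v)=(w-q_2v)/(w-v)$ vanishes at $w=q_2v$, while if $u$ belongs to the $F$-block and $w$ to the $G$-block then $\omega_{i-1,i}(u,w)=(u-q_1w)/(u-w)$ vanishes at $u=q_1w$. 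Running over the (finitely many) distributions for each of the six wheel types (a)--(f) — the fermionic quartic cases (c), (f) being the most laborious because four variables must be tracked — verifies (iii) for $F*G$. Confirming that the six sets of $\omega$-numerators cover all mixed distributions is exactly the compatibility between the quadratic relations and the Serre relations, and is the main obstacle.

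Finally, associativity I would prove by expanding both $(F*G)*H$ and $F*(G*H)$ as one and the same sum over ordered three-block partitions $\{1,\dots,M_i+N_i+P_i\}=I_i\sqcup J_i\sqcup K_i$ with $|I_i|=M_i$, $|J_i|=N_i$, $|K_i|=P_i$, weighted by $F(x_{\bullet,I})\,G(x_{\bullet,J})\,H(x_{\bullet,K})$ and the full family of cross structure functions. The equality of the two iterated expansions with this single sum rests on two elementary facts: the product $\prod_{a\in A,b\in B}\omega_{i,j}(x_{i,a},x_{j,b})$ is multiplicative in $A$, so that fusing $F$ and $G$ first and then pairing with $H$ reproduces exactly the $F$--$H$ and $G$--$H$ cross factors of the triple sum; and the signs for the fermionic colors obey the cocycle identity $\sgn(I,J)\,\sgn(I\cup J,K)=\sgn(I,J\cup K)\,\sgn(J,K)$, which is the multiplicativity of the signature under composition of the sorting permutations. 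Together with the evident associativity of the pointwise product of $F$, $G$, $H$, this shows $(F*G)*H=F*(G*H)$ and completes the proof.
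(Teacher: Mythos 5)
Your proposal is correct and follows exactly the route the paper takes, namely direct verification that $F*G$ satisfies conditions (i)--(iii) plus the standard three-block expansion for associativity; the paper simply declares these checks ``straightforward'' and omits them, so your write-up is a fleshed-out version of the same proof (residue cancellation of the spurious same-color poles, case analysis of variable distributions over the wheel loci, and the sign cocycle $\sgn(I,J)\,\sgn(I\sqcup J,K)=\sgn(I,J\sqcup K)\,\sgn(J,K)$). One cosmetic point: in the wheel-condition step the claim should be ``at least one'' rather than ``exactly one'' cross structure function vanishes -- e.g.\ in the quartic cases (c), (f) some mixed distributions kill two cross factors at once -- but this does not affect the argument.
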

\begin{proof}
Associativity is obvious from the definition. It is straightforward to check that if $F\in Sh_{m|n,\bM}$ and $G\in Sh_{m|n,\bN}$,  
then the shuffle product $F*G$ also satisfies the wheel conditions and therefore  
$F*G\in Sh_{m|n,\bM+\bN}$.
\end{proof}

As necessary, the dependence of shuffle algebras on parameters 
will be exhibited explicitly as $Sh_{m|n}(q_1,q_2,q_3)$.

We have an isomorphism
\begin{align}
&Sh_{m|n}(q_1,q_2,q_3)  \longrightarrow Sh_{n|m}(q^{-1}_1,q^{-1}_2,q^{-1}_3),
\label{cyclmn}\\
&F(x_1;\cdots;x_{m+n})\mapsto F(x_{m+1};\cdots; x_{m+n};x_1;\cdots;x_m)\,.
\nn
\end{align}

We expect that as algebra $Sh_{m|n}$ is generated by 
$\bigoplus_{i=1}^{m+n}Sh_{m|n,\bs 1_i}$,
where $\bs 1_i=(0,\dots,0,1,0,\dots,0)$ with $1$ in the $i$-th position. 

The quantum toroidal algebras $\mc E_{m|n}$ associated to $\gl_{m|n}$ were defined and studied in \cite{BM1}, \cite{BM2}. 
These algebras have generators $E_{i,k}, F_{i,k}$, $K_{i,0}^{\pm1}$, $H_{i,j}$,
 $C$, $i=1,\dots, m+n$, $k\in\Z, j\in\Z\setminus\{0\}$. 

We expect that the shuffle algebra $Sh_{m|n}$ is isomorphic to the subalgebra of $\mc E_{m|n}$ generated by $F_{i,k}$. 
Under this isomorphism $F_{i,k}$  
is related to $x_{i,1}^k\in Sh_{m|n,\bs 1_i}$.

In this text we work with the standard parity, where we have only two fermionic colors $m$ and $m+n$. The shuffle algebras in other parities are defined similarly.

\subsection{Shuffle algebra $Sh_{m}$}\label{sec:shflm}
We shall use also the shuffle algebra $Sh_m$ for the quantum toroidal algebra 
of type $\gl_m$. We recall the definition below.
Further information can be found in 
the literature \cite{N1}, \cite{N2}, \cite{FT}.

Assume $m\ge3$.

Let $Sh_m$ be the graded complex vector space
\begin{align*}
Sh_m=\bigoplus_{\bN\in \Z_{\ge0}^m} Sh_{m,\bN}\,,
\end{align*}
where $Sh_{m,\bN}$ is 
the space of all rational functions 
$F(x_1;\cdots;x_m)$, $x_i=(x_{i,1},\ldots,x_{i,N_i})$, such that 
\begin{enumerate}
 \item $F$  has the form 
\begin{align}
F(x_1;\cdots;x_{m}) 
=\frac{f(x_1;\cdots;x_{m}) }{\prod_{i=1}^{m}
\prod_{1\le a \le N_i\atop 1\le b\le N_{i+1}}(x_{i,a}-x_{i+1,b})}\,,
\label{glm_pole}
\end{align}
where $f(x_1;\cdots;x_{m})$ is a Laurent polynomial; 
\item $F$ is symmetric in $x_i$ for all $1\le i\le m$;
\item $F$ vanishes if one of the following holds for some $a,b,c$:
\begin{enumerate}
 \item $x_{i,b}=q_3x_{i-1,a}$ and $x_{i,c}=q_2x_{i,b}$, $1\le i\le m$;
 \item $x_{i,b}=q_2x_{i,a}$ and $x_{i+1,c}=q_3x_{i,b}$, 
$0\le i\le m-1$.
\end{enumerate}
\end{enumerate}
Here $x_{i+m,a}=x_{i,a}$. 
The shuffle product is defined by the same formula \eqref{shfl}, 
wherein the $\sgn$ factors are dropped and the structure functions
$\omega_{i,j}(x,y)$ are replaced by
\begin{align*}
&\omega_{i,i}(x,y)=\frac{x-q_2y}{x-y}\,,
\\
&\omega_{i-1,i}(x,y)=\frac{x-q_1y}{x-y}\,,
\\
&\omega_{i,i-1}(x,y)=d
\frac{x-q_3y}{x-y}\,,
\end{align*}
and $\omega_{i,j}(x,y)=1$ in all other cases.
\medskip

In this case all variables are bosonic. The wheel conditions correspond to triangles in Figure \ref{fig:wheel}. 

\medskip

When $m=1,2$ the definition of $Sh_m$ needs a slight modification. 

In both cases the wheel conditions (a),(b) apply under the 
convention  $x_{i+m,a}=x_{i,a}$ ($m=1,2$). 

For $m=2$, the structure functions are changed to
\begin{align*}
&\omega_{i,i}(x,y)=\frac{x-q_2y}{x-y}\,,\\
&\omega_{i,i-1}(x,y)=\frac{(x-q_1y)(x-q_3y)}{(x-y)^2} \,.
\end{align*}
For $m=1$, the pole condition \eqref{glm_pole} reads
\begin{align*}
F(x)=\frac{f(x)}{\prod_{1\le a<b\le N}(x_a-x_b)^2},
\end{align*}
where $f(x)$ is a symmetric Laurent polynomial
in $x=(x_1,\ldots,x_N)$. 
The structure functions are modified to
\begin{align*}
\omega_{1,1}(x,y)=\frac{(x-q_1y)(x-q_2y)(x-q_3y)}{(x-y)^3}\,. 
\end{align*}

In what follows we write $Sh_{m}=Sh_{m|0}$. 

\section{Fusion homomorphism}\label{fusion sec}

Consider the subalgebra of $Sh_{m|n}$ consisting of subspaces of 
degree $N_1=\cdots=N_{m+n}$:
\begin{align*}
Sh^0_{m|n}=\bigoplus_{N\ge0} Sh^0_{m|n,N}\,,
\quad 
Sh^0_{m|n,N}=Sh_{m|n,N,\ldots,N}\,. 
\end{align*}

The aim of this section is to introduce homomorphisms of algebras
of the form 
\begin{align*}
&Sh^0_{m|n}(q_1,q_2,q_3)
\longrightarrow Sh^0_{m|n-1}(\bar q_1,q_2,\bar q_3)
\otimes Sh_1(q_1^{-1}q_3^{-m+n-1},q_2^{-1},q_3^{m-n})\,,
\\
&Sh^0_{m}(q_1,q_2,q_3)
\longrightarrow Sh^0_{m-1}(\tilde{q}_1,q_2,\tilde{q}_3)
\otimes Sh_1(q_1q_3^{-m+1},q_2,q_3^{m})
\end{align*}
with appropriate parameters $\bar{q}_s, \tilde{q}_s$.
We call them {\it fusion homomorphisms}. 

We are motivated by the constructions of \cite{FJMM1}. In that work, fused currents were used to construct subalgebras $\mathcal E_{m-1}\otimes \mathcal E_1\subset\mathcal E_m$ which is the quantum toroidal version of the Lie 
subalgebra inclusion $\gl_{m-1}\oplus \gl_1\subset \gl_m$. 

\subsection{Fusion homomorphism for $Sh_{m|n}$}\label{sec:fusion-mn}
In this subsection we assume that $m\ge n+1$, $n\ge 1$. 
We set 
\begin{align}
&\bar{q}_1=q_1 q_1^{-\frac{1}{m-n+1}}\,,
\quad \bar{q}_3=q_3 q_1^{\frac{1}{m-n+1}}\,.
\label{qbar}
\end{align}

Fix $L,N$ with $0\le L\le N$. 
We introduce new variables $y_i=(y_{i,a})_{1\le a\le L}$, $1\le i\le m+n-1$, 
$z=(z_b)_{L+1\le b\le N}$, 
and consider the following  
specialization of the variables $x_i=(x_{i,1},\ldots,x_{i,N})$, see  Figure \ref{fig:spv} below:
\begin{align}
&\text{$1\le a \le L$}:&&x_{i,a}=
\begin{cases}
q_1\ssq^i
y_{i,a}&\text{if $1\le i\le m$},\\ 
q_1\ssq^{2m-i}
y_{i,a}&\text{if $m+1\le i\le m+n-1$},\\ 
q_1y_{m+n-1,a}&\text{if $i=m+n$},\\ 
\end{cases} 
\label{xtoy}\\
&\text{$L+1\le b \le N$}:&&x_{i,b}=
\begin{cases}
q_3^{i-1}z_b & \text{if $0\le i\le m-1$}, \\
q_3^{2m-i-1}z_b & \text{if $m\le i\le m+n-1$}. \\
\end{cases} 
\label{xtoz}
\end{align}
where $\ssq=q_1^{-1}\bar{q}_1=q_1^{-1/(m-n+1)}$.
We have in particular
\begin{align*}
&\text{$1\le a \le L$}:
&&x_{1,a}=\bar{q}_1y_{1,a},\quad
x_{m+n-1,a}=y_{m+n-1,a},\quad x_{m+n,a}=q_1y_{m+n-1,a},\\
&\text{$L+1\le b \le N$}:&
&x_{1,b}=z_b\,,\quad x_{m+n-1,b}=q_3^{m-n}z_{b}\,,\quad
 x_{m+n,b}=q_3^{-1}z_b\,. 
\end{align*}

\bigskip

\begin{figure}[H]
\begin{align*}
\begin{tikzpicture}
\coordinate(A0) at (0.5,-0.86);
\coordinate(A1) at (0.86,-0.5);
\coordinate(A2) at (1,0);
\coordinate(A3) at (0.86,0.5);
\coordinate(A4) at (0.5,0.86);
\coordinate(A5) at (0,1);
\coordinate(A6) at (-0.5,0.86);
\coordinate(A7) at (-0.86,0.5);
\coordinate(A8) at (-0,-1);
\draw(A0) arc(-60:-30:1cm);
\draw(A1) arc(-30:0:1cm);
\draw(A2) arc(0:30:1cm);
\draw(A3) arc(30:60:1cm);
\draw[very thick,red](A4) arc(60:90:1cm);
\draw(A5) arc(90:120:1cm);
\draw(A6) arc(120:150:1cm);
\draw[dashed](A7) arc(150:270:1cm);
\draw(A8) arc(270:300:1cm);
\filldraw[white](A0) circle(0.12cm);
\filldraw[white](A1) circle(0.12cm);
\filldraw[white](A2)circle(0.12cm);
\filldraw[white](A3)circle(0.12cm);
\filldraw[white](A4)circle(0.12cm);
\filldraw[white](A5)circle(0.12cm);
\filldraw[white](A6)circle(0.12cm);
\draw(A0) circle(0.12cm);
\draw(A1) node{$\otimes$};
\draw[dashed](0.96,-0.26) arc(-15:15:1cm);
\draw(A3) circle(0.12cm);
\draw(A4) circle(0.12cm);
\draw(A5) node{$\otimes$};
\draw(A6) circle(0.12cm);
\draw(A0) node[right]{\tiny $m-1$};
\draw(A1) node[right]{\tiny $m$};
\draw(A4) node[right]{\tiny $m+n-1$};
\draw(A5) node[above]{\tiny $m+n$};
\draw(A6) node[left]{\tiny $1$};
\coordinate(B0) at (5.5,-0.86);
\coordinate(B1) at (5.86,-0.5);
\coordinate(B2) at (6,0);
\coordinate(B3) at (5+0.86,0.5);
\coordinate(B4) at (5+0.5,0.86);
\coordinate(B5) at (5,1);
\coordinate(B6) at (5-0.5,0.86);
\coordinate(B7) at (5-0.86,0.5);
\coordinate(B8) at (5-0,-1);
\draw(B0)[very thick,blue] arc(-60:-30:1cm);
\draw(B1)[very thick,green] arc(-30:0:1cm);
\draw(B2)[very thick,green] arc(0:30:1cm);
\draw(B3)[very thick,green] arc(30:60:1cm);
\draw(B4) arc(60:90:1cm);
\draw(B5)[very thick,blue] arc(90:120:1cm);
\draw(B6)[very thick,blue] arc(120:150:1cm);
\draw[dashed,very thick,blue](B7) arc(150:270:1cm);
\draw(B8)[very thick,blue] arc(270:300:1cm);
\filldraw[white](B0) circle(0.12cm);
\filldraw[white](B1) circle(0.12cm);
\filldraw[white](B2)circle(0.12cm);
\filldraw[white](B3)circle(0.12cm);
\filldraw[white](B4)circle(0.12cm);
\filldraw[white](B5)circle(0.12cm);
\filldraw[white](B6)circle(0.12cm);
\draw(B0) circle(0.12cm);
\draw(B1) node{$\otimes$};
\draw[dashed,very thick,green](5.96,-0.26) arc(-15:15:1cm);
\draw(B3) circle(0.12cm);
\draw(B4) circle(0.12cm);
\draw(B5) node{$\otimes$};
\draw(B6) circle(0.12cm);
\draw(B0) node[right]{\tiny $m-1$};
\draw(B1) node[right]{\tiny $m$};
\draw(B4) node[right]{\tiny $m+n-1$};
\draw(B5) node[above]{\tiny $m+n$};
\draw(B6) node[left]{\tiny $1$};
\end{tikzpicture}
\end{align*}
\caption{Specialization \eqref{xtoy}, \eqref{xtoz}.}\label{fig:spv}
\end{figure}
\bigskip

For a function 
$F(x_1;\cdots;x_{m+n})$, let
$\rho_{N,L}(F)(y_1;\cdots;y_{m+n-1};z)$ 
denote the function obtained by the substitution 
\eqref{xtoy}--\eqref{xtoz}.

\begin{lem}
For an element 
\begin{align*}
F(x_1;\cdots;x_{m+n}) 
=\frac{f(x_1;\cdots;x_{m+n}) }{\prod_{i=1}^{m+n}\prod_{1\le a, b\le N}(x_{i,a}-x_{i+1,b})}\,
\in Sh^0_{m|n,N}\,,
\end{align*}
define 
\begin{align*}
&\bar{F}_{N,L}(y_1;\cdots;y_{m+n-1}|z) 
\\
&=\frac{\bar{f}_{N,L}(y_1;\cdots;y_{m+n-1}|z)}
{\prod_{i=1}^{m+n-2}\prod_{1\le a, b\le N}(y_{i,a}-y_{i+1,b})
\cdot \prod_{1\le a, b\le N}(y_{m+n-1,a}-y_{1,b})
\cdot \prod_{1\le a<b\le N}(z_a-z_b)^2}
\end{align*}
as follows. 

If $n>1$ we define $\bar{f}_{N,L}$ by
\begin{align*}
&\rho_{N,L}\bigl(f(x_1;\cdots;x_{m+n})\bigr)
=const.
\bar{f}_{N,L}(y_1;\cdots;y_{m+n-1}|z)
\\
&\times 
\prod_{1\le a,b\le L}(y_{m+n-1,a}-q_2 y_{m+n-1,b}) \cdot
\prod_{L+1\le a,b\le N}(z_a-q_2z_b)^{m+n-2}\prod_{L+1\le a<b\le N}(z_a-z_b)^2
\\
&\times 
\prod_{i=1}^{m-1}\prod_{1\le a\le L\atop L+1\le b\le N}
(q_1\ssq^i y_{i,a}-q_2q_3^{i-1}z_b)(q_1\ssq^i y_{i,a}-q_2^{-1}q_3^{i-1}z_b)
\\
&\times
\prod_{i=m+1}^{m+n-1}\prod_{1\le a\le L\atop L+1\le b\le N}
(q_1\ssq^{2m-i}y_{i,a}-q_2q_3^{2m-i-1}z_b)(q_1\ssq^{2m-i}y_{i,a}-q_2^{-1}q_3^{2m-i-1}z_b)
\\
&\times\prod_{1\le a\le L\atop L+1\le b\le N}
(q_1\ssq^m y_{m,a}-q_2q_3^{m-1}z_b)(q_1\ssq^m y_{m,a}-q_3^{m-1}z_b)
(y_{m+n-1,a}-z_b)(y_{m+n-1,a}-q_2z_b)\,.
\end{align*}
If $n=1$, we replace the factor 
$\prod_{1\le a,b\le L}(y_{m+n-1,a}-q_2 y_{m+n-1,b})$
in the second line of the right hand side 
by $\prod_{a=1}^L y_{m,a}\prod_{1\le a<b\le L}(y_{m,a}-y_{m,b})^2$. 

Then $\bar{F}_{N,L}(y_1;\cdots;y_{m+n-1}|z)$ belongs to 
\begin{align*}
Sh^0_{m|n-1,L}(\bar q_1,q_2,\bar q_3)
\otimes 
Sh_{1,N-L}(q_1^{-1}q_3^{-m+n-1},q_2^{-1},q_3^{m-n})\,.
\end{align*}
\end{lem}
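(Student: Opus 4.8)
The plan is to verify directly that $\bar{F}_{N,L}$ satisfies the three defining conditions (i)--(iii) of the target algebra $Sh^0_{m|n-1,L}(\bar q_1,q_2,\bar q_3)\otimes Sh_{1,N-L}(q_1^{-1}q_3^{-m+n-1},q_2^{-1},q_3^{m-n})$, deducing each from the corresponding condition on $F\in Sh^0_{m|n,N}$. As a preliminary consistency check one notes $\bar q_1 q_2\bar q_3=q_1q_2q_3=1$ and $(q_1^{-1}q_3^{-m+n-1})q_2^{-1}q_3^{m-n}=q_1^{-1}q_2^{-1}q_3^{-1}=1$, so both target factors are genuine shuffle algebras with the correct parameter normalization; this is exactly what forces the values \eqref{qbar} and $\ssq=q_1^{-1/(m-n+1)}$.

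First I would treat the pole structure, which is the heart of the matter. Substituting \eqref{xtoy}--\eqref{xtoz} into the denominator $\prod_i\prod_{a,b}(x_{i,a}-x_{i+1,b})$ of $F$, I would sort the resulting factors into three families: the $y$--$y$ factors (both $a,b\le L$), which reproduce up to constants the cyclic denominator $\prod_{i=1}^{m+n-2}\prod(y_{i,a}-y_{i+1,b})\cdot\prod(y_{m+n-1,a}-y_{1,b})$ once colors $m+n-1$ and $m+n$ fuse through $x_{m+n-1,a}=y_{m+n-1,a}$, $x_{m+n,a}=q_1y_{m+n-1,a}$; the $z$--$z$ factors (both $a,b>L$), which under $x_{i,b}=q_3^{i-1}z_b$ collapse to constants times powers of $(z_a-z_b)$ and $(z_a-q_3 z_b)$; and the mixed $y$--$z$ factors. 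The essential claim is that $\rho_{N,L}(f)$ is divisible by the big product on the right-hand side, so that $\bar f_{N,L}$ is a genuine Laurent polynomial and the only surviving poles are those displayed in the denominator of $\bar F_{N,L}$. This divisibility is precisely where the wheel conditions of $F$ enter: I would match each factor of the big product to a locus on which $f$ must vanish. The mixed factors, written in the original variables as $x_{i,a}-q_2^{\pm1}x_{i,b}$ with $x_{i,a}$ of $y$-type and $x_{i,b}$ of $z$-type of the same color, correspond to the configurations appearing in the cubic conditions (a),(b),(d),(e) and, near the equator, in the quartic conditions (c),(f); the internal $z$-factors, including the high power $(z_a-q_2 z_b)^{m+n-2}$, arise from combining the wheel conditions of $F$ over all $m+n-2$ bosonic colors that are identified onto the single $Sh_1$ color, which forces high-order vanishing on the $z$-block.

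Next, symmetry (ii) is comparatively soft: since \eqref{xtoy}--\eqref{xtoz} rescale all variables of a fixed color by a single constant, the symmetry of $F$ in each bosonic $x_i$ and the skew-symmetry in $x_m$ pass directly to $\bar F_{N,L}$ in the variables $y_i$. The delicate point is the fused fermionic color $m+n-1$ of $Sh_{m|n-1}$: interchanging $y_{m+n-1,a}\leftrightarrow y_{m+n-1,b}$ simultaneously permutes the even variables $x_{m+n-1}$ and the odd variables $x_{m+n}$, so $\rho_{N,L}(F)$ acquires exactly the fermionic sign, while one checks the extracted factor $\prod_{a,b}(y_{m+n-1,a}-q_2 y_{m+n-1,b})$ is invariant. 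The analogous bosonic statement, with the replacement factor $\prod_a y_{m,a}\prod_{a<b}(y_{m,a}-y_{m,b})^2$, handles the degenerate case $n=1$, where the two fermionic colors merge and the target is $Sh_m$. For the $z$-block all colors are identified, giving a symmetric function with the double poles $\prod_{a<b}(z_a-z_b)^2$ characteristic of $Sh_1$.

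Finally, for the wheel conditions (iii) I would pull each target wheel condition back through the specialization. A wheel condition of $Sh_{m|n-1}(\bar q_1,q_2,\bar q_3)$ among the $y$-variables, expressed in $\bar q_1,\bar q_3$, transforms under \eqref{xtoy} into a relation among the $x$-variables that is precisely one of the conditions (a)--(f) of $Sh_{m|n}$ in the original parameters; the shifts \eqref{qbar} and the factor $\ssq$ are engineered so that the $q_1^{\pm1/(m-n+1)}$ discrepancies telescope around the cycle. Likewise the wheel conditions of $Sh_1(q_1^{-1}q_3^{-m+n-1},q_2^{-1},q_3^{m-n})$ among the $z$-variables follow from the $z$-block specializations of (a)--(f). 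The main obstacle I anticipate is the polynomiality step above: establishing the divisibility of $\rho_{N,L}(f)$ by the big product with the correct multiplicities---the exponent $m+n-2$, the mixed factors flanking the two fermionic colors, and the boundary behaviour both at the equator and at the cyclic seam $y_{m+n-1}\to y_1$---is a careful case analysis, after which symmetry and the wheel-condition pullbacks become routine.
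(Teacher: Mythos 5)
Your overall strategy coincides with the paper's proof, which likewise verifies directly that $\bar f_{N,L}$ is a Laurent polynomial and then checks the (skew-)symmetry and the wheel conditions of the two target factors; your parameter checks, the sign analysis for the fused color $m+n-1$ (and its bosonic degeneration at $n=1$), and the pullback of the target wheel conditions through \eqref{xtoy}--\eqref{xtoz} are exactly the steps the paper declares ``straightforward to check.''

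There is, however, a genuine gap in your key step, the divisibility of $\rho_{N,L}(f)$ by the extracted product. You attribute \emph{every} factor of that product to the wheel conditions (a)--(f), but several factors cannot be obtained from wheel conditions at all: they come from the trivial zeros \eqref{trivialzero}, i.e.\ from the skew-symmetry of $f$ in the fermionic families $x_m$ and $x_{m+n}$, which the paper invokes explicitly alongside (a)--(f). Concretely: the factor $\prod_{L+1\le a<b\le N}(z_a-z_b)^2$ requires $f$ to be divisible by $(x_{m,a}-x_{m,b})(x_{m+n,a}-x_{m+n,b})$, each becoming a constant multiple of $(z_a-z_b)$ under \eqref{xtoz}; no wheel condition forces vanishing on the divisor $z_a=z_b$ (for instance, condition (c) restricted to the $z$-block cuts out the codimension-three locus $z_a=z_b=z_c$, $z_d=q_2 z_c$, not a divisor). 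Likewise the equatorial mixed factor $(q_1\ssq^m y_{m,a}-q_3^{m-1}z_b)$ is precisely the locus where a $y$-type and a $z$-type fermionic variable of color $m$ coincide --- again a skew-symmetry zero, with no wheel condition triggered at its generic point --- and for $n=1$ the replacement factor $\prod_{a}y_{m,a}\prod_{a<b}(y_{m,a}-y_{m,b})^2$ also needs the skew-symmetry of both fermionic colors. As written, your matching procedure fails on these factors, so the polynomiality of $\bar f_{N,L}$ would not follow.

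A second, smaller gap: the pullback of the target wheel conditions is not routine at the seam. For the target condition (f), the three defining equations force $y_{m+n-1,b}=q_2\,y_{m+n-1,d}$ (since $\bar q_1\bar q_3=q_1q_3=q_2^{-1}$), a locus on which the extracted factor $\prod_{1\le a,b\le L}(y_{m+n-1,a}-q_2 y_{m+n-1,b})$ already vanishes to first order; hence to conclude that $\bar f_{N,L}$ (and not merely $\rho_{N,L}(f)$) vanishes there, one must show that $\rho_{N,L}(f)$ vanishes to order two, which uses two distinct source conditions simultaneously (condition (d) with $i=m+n-1$ together with condition (f)) plus a transversality argument for the corresponding linear ideals. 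The same compensation is needed for the $Sh_1$ wheel conditions in $z$, whose loci sit inside the zero set of the extracted power $(z_a-q_2z_b)^{m+n-2}$. This is the same multiplicity bookkeeping you flagged for the polynomiality step; it does not disappear in the part of your plan you call routine.
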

\begin{proof}
 The zero conditions (a)--(f) along with \eqref{trivialzero} imply that 
$\bar{f}_{N,L}(y_1;\cdots;y_{m+n-1}|z)$ defined as above 
is a Laurent polynomial. It is straightforward to check that 
the latter has the (skew-)symmetry in $y_i$ or $z$,
and that it satisfies the wheel conditions 
for $Sh_{m|n-1}(\bar{q}_1,q_2,\bar{q}_3)$ in $(y_1,\ldots,y_{m+n-1})$
and for $Sh_1(q_1^{-1}q_3^{-m+n-1},q_2^{-1},q_3^{m-n})$ in $z$.
\end{proof}
\medskip

In the definition of $\bar{f}_{N,L}$ the overall 
constant multiple is at our disposal.
We choose it 
so that in terms of $\bar{F}_{N,L}$ the formula reads as follows.
\begin{align*}
&\rho_{N,L}\bigl(F(x_1;\cdots;x_{m+n})\bigr)
=
\bar{F}_{N,L}(y_1;\cdots;y_{m+n-1}|z) \times A B C\,,
\end{align*}
where for $n>1$
\begin{align*}
A&= 
\prod_{i=0}^{m-1}\prod_{1\le a,b\le L}
\frac{y_{i,a}-y_{i+1,b}}{y_{i,a}-\ssq y_{i+1,b}}
\prod_{i=m}^{m+n-2}\prod_{1\le a,b\le L}
\frac{y_{i,a}-y_{i+1,b}}{y_{i,a}-\ssq^{-1}y_{i+1,b}}
\\
&\times 
\prod_{1\le a<b\le L}
d^{-1}
\frac{(y_{m+n-1,a}-q_2y_{m+n-1,b})(y_{m+n-1,a}-q_2^{-1}y_{m+n-1,b})}
{(y_{m+n-1,a}-q_1y_{m+n-1,b})(y_{m+n-1,a}-q_1^{-1}y_{m+n-1,b})}\,,
\\
B&=\prod_{L<a<b\le N}
d^{m-n}
\left(\frac{z_a-q_2 z_b}{z_a-q_3z_b}\frac{z_a-q_2^{-1} z_b}{z_a-q_3^{-1}z_b}
\right)^{m+n-2}
\frac{(z_a-z_b)^2}{(z_a-q_3z_b)(z_a-q_3^{-1}z_b)}
\frac{(z_a-z_b)^2}{(z_a-q_3^{m-n+1}z_b)(z_a-q_3^{-m+n-1}z_b)}\,,
\\
C&=\prod_{i=1}^{m-1}\prod_{1\le a\le L\atop L<b\le N}
d
\frac{(q_1\ssq^i y_{i,a}-q_2q_3^{i-1}z_b)(q_1\ssq^i y_{i,a}-q_2^{-1}q_3^{i-1}z_b)}
{(q_1\ssq^i y_{i,a}-q_3^{i}z_b)(q_1\ssq^i y_{i,a}-q_3^{i-2}z_b)}
\\
\times&
\prod_{1\le a\le L\atop L<b\le N}
d^{-1}
\frac{(q_1\ssq^m y_{m,a}-q_3^{m-1}z_b)(q_1\ssq^m y_{m,a}-q_2q_3^{m-1}z_b)}
{(q_1\ssq^m y_{m,a}-q_3^{m-2}z_b)(q_1\ssq^m y_{m,a}-q_3^{m-2}z_b)}
\\
&\times
\prod_{i=m+1}^{m+n-2}\prod_{1\le a\le L\atop L<b\le N}
d^{-1}
\frac{(q_1\ssq^{2m-i}y_{i,a}-q_2q_3^{2m-i-1}z_b)
(q_1\ssq^{2m-i}y_{i,a}-q_2^{-1}q_3^{2m-i-1}z_b)}
{(q_1\ssq^{2m-i}y_{i,a}-q_3q_3^{2m-i-1}z_b)
(q_1\ssq^{2m-i}y_{i,a}-q_3^{-1}q_3^{2m-i-1}z_b)}
\\
\times&
\prod_{1\le a\le L\atop L<b\le N}
\frac{(y_{m+n-1,a}-q_2q_3^{m-n}z_b)(y_{m+n-1,a}-q_2^{-1}q_3^{m-n}z_b)}
{(y_{m+n-1,a}-q_3q_3^{m-n}z_b)(y_{m+n-1,a}-q_1^{-1}q_3^{m-n}z_b)}
\frac{(y_{m+n-1,a}-q_2z_b)(y_{m+n-1,a}-z_b)}
{(y_{m+n-1,a}-q_3^{-1}z_b)(y_{m+n-1,a}-q_1^{-1}z_b)}\,.
\end{align*} 
For $n=1$, $B$ is the same and $A,C$ are modified as
\begin{align*}
A&= 
\prod_{i=0}^{m-1}\prod_{1\le a,b\le L}
\frac{y_{i,a}-y_{i+1,b}}{y_{i,a}-\ssq y_{i+1,b}}
\times
\prod_{1\le a< b\le L}
\frac{d^{-1}(y_{m,a}-y_{m,b})^2}
{(y_{m,a}-q_1y_{m,b})(y_{m,a}-q_1^{-1}y_{m,b})}\,,
\\
C&=\prod_{i=1}^{m-1}\prod_{1\le a\le L\atop L<b\le N}
d
\frac{(q_1\ssq^i y_{i,a}-q_2q_3^{i-1}z_b)(q_1\ssq^i y_{i,a}-q_2^{-1}q_3^{i-1}z_b)}
{(q_1\ssq^i y_{i,a}-q_3^{i}z_b)(q_1\ssq^i y_{i,a}-q_3^{i-2}z_b)}
\\
\times&
\prod_{1\le a\le L\atop L<b\le N}
\frac{(y_{m,a}-q_1^{-1}q_3^{m-2}z_b)(y_{m,a}-q_3^{m-1}z_b)}
{(y_{m-1,a}-q_3^{m-2}z_b)(y_{m,a}-q_1^{-1}q_3^{m-1}z_b)}
\frac{(y_{m,a}-q_2z_b)(y_{m,a}-z_b)}
{(y_{m,a}-q_3^{-1}z_b)(y_{m,a}-q_1^{-1}z_b)}\,.
\end{align*}

\medskip

Denoting $\bar{F}_{N,L}$ by $\pi_{N,L}(F)$,
we set  
$\pi_{m|n}^{m|n-1}=\sum_{L=0}^N\pi_{N,L}$.

\begin{prop}\label{prop:fusion-glmn}
The linear map  
\begin{align*}
&\pi_{m|n}^{m|n-1}:Sh^0_{m|n}(q_1,q_2,q_3)
\longrightarrow
Sh^0_{m|n-1}(\bar q_1,q_2,\bar q_3)
\otimes Sh_1(q_1^{-1}q_3^{-m+n-1},q_2^{-1},q_3^{m-n})\,,
\\
&\bar{q}_1=q_1 q_1^{-\frac{1}{m-n+1}}\,,
\quad \bar{q}_3=q_3 q_1^{\frac{1}{m-n+1}}\,,
\end{align*}
is a homomorphism of $\Z_{\ge0}$-graded algebras.
\end{prop}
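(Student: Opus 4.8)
The plan is to verify multiplicativity on graded components. Since $\pi_{m|n}^{m|n-1}=\sum_L\pi_{N,L}$ respects the bigrading (the $Sh_{m|n-1}$-degree being $L$ and the $Sh_1$-degree being $N-L$), and these degrees add under the product in $Sh^0_{m|n-1}\otimes Sh_1$, it suffices to prove, for $F\in Sh^0_{m|n,M}$ and $G\in Sh^0_{m|n,N}$, the identity
\[
\pi_{M+N,L}(F*G)=\sum_{K+K'=L}\pi_{M,K}(F)*\pi_{N,K'}(G)
\]
for every $L$. First I would expand the left-hand side by inserting the shuffle formula \eqref{shfl} for $F*G$ and then applying the specialization $\rho_{M+N,L}$ of \eqref{xtoy}--\eqref{xtoz}. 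In the resulting sum over subsets $I_i\sqcup J_i=\{1,\dots,M+N\}$, the variables with index $\le L$ are sent to the fused $y$-variables and those with index $>L$ to the diagonal $z$-strings $(x_{1,b},\dots,x_{m+n,b})$ lying on the geometric progression $q_3^{\bullet}z_b$; I would reorganize the sum according to how each $z$-string and each $y$-index is distributed between $F$ and $G$, which is legitimate because $F$ and $G$ are (skew-)symmetric in each color.

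The heart of the argument is the vanishing of the ``broken'' terms. I claim that any shuffle term in which a complete $z$-string $(x_{1,b},\dots,x_{m+n,b})$, $b>L$, is split between $I$ and $J$ specializes to zero. Indeed, viewing the colors $1,\dots,m+n$ as a cycle, a proper nonempty subset assigned to $F$ must possess at least one ``ascending'' edge $(i-1,i)$ with $i-1$ assigned to $G$ and $i$ to $F$; the corresponding factor $\omega_{i,i-1}(x_{i,b},x_{i-1,b})$ produced by the pair $I_i\times J_{i-1}$ in \eqref{shfl} vanishes, because on the string one has $x_{i,b}=q_3^{\pm1}x_{i-1,b}$ with the sign matching the shape of $\omega_{i,i-1}$ before and after the equator (and at the seam $\omega_{1,m+n}(x_{1,b},x_{m+n,b})$ and the equator crossing $\omega_{m,m-1}(x_{m,b},x_{m-1,b})$ one checks the same cancellation directly). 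Consequently only terms with intact $z$-strings survive, and, because $|I_i|=M$ is independent of $i$, the number $K$ of $y$-indices assigned to $F$ is then forced to be the same in every color; an analogous check, using in addition the skew-symmetry zeros \eqref{trivialzero} and the quartic wheel data at the last node, shows that the fused pair $(x_{m+n-1,a},x_{m+n,a})$, $a\le L$, must also stay together. This pins down the decomposition of the surviving terms by $(K,K')$ with $K+K'=L$, the number of intact $z$-strings sent to $F$ being $M-K$.

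It then remains to match the surviving structure functions. Among the $y$-variables the products of the original $\omega_{i,j}$ reassemble, after the substitution $\ssq=q_1^{-1/(m-n+1)}$, into the structure functions of $Sh_{m|n-1}(\bar q_1,q_2,\bar q_3)$ with $\bar q_1=q_1q_1^{-1/(m-n+1)}$, $\bar q_3=q_3q_1^{1/(m-n+1)}$ — this is exactly what forces the stated parameters — the contributions of the two original colors $m+n-1,m+n$ combining into the structure function of the single fused (fermionic) color $m+n-1$, while the signs $\sgn(I_m,J_m)$ and $\sgn(I_{m+n},J_{m+n})$ become the signs of the fermionic colors $m$ and $m+n-1$ of $Sh_{m|n-1}$. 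Among distinct $z$-strings the products reassemble into the $Sh_1(q_1^{-1}q_3^{-m+n-1},q_2^{-1},q_3^{m-n})$ structure function, and all remaining factors — the string-internal ones, the $y$--$z$ cross terms, and the distinct-string normalizations — are precisely the factors $A$, $B$, $C$ by which $\rho_{M+N,L}$ differs from $\pi_{M+N,L}$, so dividing by $ABC$ yields the right-hand side. The main obstacle is this final bookkeeping together with the broken-string/broken-pair vanishing: the clean cancellation of $\omega_{i,i-1}$ on the $z$-strings is the crucial input, while the fused-node analysis (matching the fused $\bar\omega$ and the fermionic sign at color $m+n-1$, and treating the case $n=1$ via \eqref{trivialzero}) is the most delicate part.
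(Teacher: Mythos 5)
Your overall strategy is the same as the paper's: specialize the shuffle product, argue that only terms in which every $z$-string stays intact and every fused $y$-pair stays together survive, and then match the surviving structure functions against the factors $A,B,C$. However, your key vanishing claim is false as stated. Writing $I_i'=I_i\cap\{1,\dots,L\}$, $I_i''=I_i\cap\{L+1,\dots,M+N\}$ as in the paper, you assert that any broken $z$-string must contain an ascending edge $(i-1,i)$, with color $i-1$ in $G$ and color $i$ in $F$, whose factor $\omega_{i,i-1}(x_{i,b},x_{i-1,b})$ vanishes on the string. This is true for the edges $1\le i\le m$ (including the seam $i=1$ and the equator crossing $i=m$, which you checked) and for $m+1\le i\le m+n-1$, but it fails at the edge $i=m+n$: there $x_{m+n,b}=q_3^{-1}z_b$ and $x_{m+n-1,b}=q_3^{m-n}z_b$, so $x_{m+n,b}/x_{m+n-1,b}=q_3^{n-m-1}\neq q_3^{-1}$ (since $m\neq n$), and
$\omega_{m+n,m+n-1}(x_{m+n,b},x_{m+n-1,b})
=d^{-1}\bigl(q_3^{-1}-q_3^{m-n-1}\bigr)\big/\bigl(q_3^{-1}-q_3^{m-n}\bigr)\neq 0$.
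A defect of this kind is unavoidable: the product of the ratios $x_{i,b}/x_{i-1,b}$ around the cycle is $1$, while the product of the zero-locations of the $\omega_{i,i-1}$ is $q_3^{m-n}\neq1$, so exactly one edge cannot match, and the specialization \eqref{xtoz} puts it at $i=m+n$. Consequently a term in which a string is broken into a cyclic arc $\{m+n,1,\dots,j\}$ assigned to $F$ and its complement assigned to $G$ (for instance $S=\{m+n\}$) carries no vanishing structure factor, and your argument does not kill it. The same problem occurs for the fused pair in one direction: if $x_{m+n,a}$ goes to $F$ and $x_{m+n-1,a}$ to $G$, the cross factor $\omega_{m+n,m+n-1}(q_1y_{m+n-1,a},y_{m+n-1,a})$ is nonzero; your appeal to the skew-symmetry zeros \eqref{trivialzero} and the quartic wheel conditions cannot repair this, because those conditions constrain $F$ and $G$ separately, not the structure factors between them.

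The missing ingredient is the global counting argument of the paper's proof. Local vanishing only yields the one-directional chains $I''_{m+n-1}\subset I''_{m+n-2}\subset\cdots\subset I''_1\subset I''_{m+n}$ (from the string factors with $1\le i\le m+n-1$) and $I'_{m+n-1}\subset I'_{m+n}$ (from the pair factor); neither chain closes the cycle by itself. One must then use $|I_i'|+|I_i''|=M$ for \emph{every} color $i$: the two chains give $|I''_{m+n-1}|\le|I''_{m+n}|$ and $|I'_{m+n-1}|\le|I'_{m+n}|$, and adding these forces equality throughout, hence $I''_1=\cdots=I''_{m+n}$ and $I'_{m+n-1}=I'_{m+n}$. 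It is exactly this step that excludes the problematic arcs, so it is essential, not bookkeeping; you do invoke the constancy of $|I_i|$, but only downstream, after intactness has (incorrectly) been taken for granted. Once this correction is made, your reduction to the sum over $(K,K')$ with $K+K'=L$ and the final matching against $A,B,C$ agree with the paper's concluding computation.
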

\begin{proof}
 Let $F\in Sh_{m|n,M}^0$,  $G\in Sh_{m|n,N}^0$. 
First we examine the effect of the specialization 
\eqref{xtoy},\eqref{xtoz} 
in the shuffle product
\eqref{shfl}. 
Let $0\le L\le M+N$. 
For each partition 
$I_i\sqcup J_i=\{1,\ldots,M+N\}$ such that $|I_i|=M$, $|J_i|=N$, set
\begin{align*}
&I_i'=I_i\cap\{1,\ldots,L\}\,,\quad I_i''=I_i\cap\{L+1,\ldots,M+N\}\,,\\
&J_i'=J_i\cap\{1,\ldots,L\}\,,\quad J_i''=J_i\cap\{L+1,\ldots,M+N\}\,,
\end{align*}
so that 
\begin{align*}
&I'_i\sqcup J'_i=\{1,\ldots,L\}\,,\quad I''_i\sqcup J''_i=\{L+1,\ldots,M+N\}\,,
\\
&|I'_i|+|I''_i|=M\,,\quad |J'_i|+|J''_i|=N\,.
\end{align*}
In order that the factor 
\begin{align*}
\prod_{a\in I'_{m+n-1}\,, b\in J'_{m+n}}\omega_{m+n-1,m+n}(x_{m+n-1,a},q_1x_{m+n-1,b}) 
\end{align*}
be non-vanishing, we must have $I_{m+n-1}'\cap J'_{m+n}=\emptyset$, which implies
\begin{align*}
I'_{m+n-1}\subset I'_{m+n}\,. 
\end{align*}
Similarly, 
\begin{align*}
&\prod_{a\in I''_{i}\,, b\in J''_{i-1}}
\omega_{i,i-1} (q_3^{i-1}z_a,q_3^{i-2}z_b) \neq 0, \quad 1\le i\le m\,,
\\
&\prod_{a\in I''_{i}\,, b\in J''_{i-1}}
\omega_{i,i-1} (q_3^{2m-i-1}z_a,q_3^{2m-i}z_b) \neq 0, \quad m+1\le i\le m+n-1\,,
\end{align*}
implies 
\begin{align*}
I''_{m+n-1}\subset I''_{m+n-2}\subset\cdots\subset I''_1\subset I''_{m+n}\,. 
\end{align*}
Since $|I'_i|+|I''_i|=M$ for all $i$, we must have that
\begin{align*}
&I'_{m+n-1}=I'_{m+n}\,,\quad
I_1''=\cdots=I''_{m+n} \,.
\end{align*}
Writing $I''=I''_i$ and $J''=J''_i$, 
we see that $\rho_{M+N,L}\bigl(F*G\bigr)$ is reduced to the sum over partitions
\begin{align*}
&I_i'\sqcup J'_i =\{1,\ldots,L\}\,,\quad i=1,\ldots,m+n-1\,,\\
&I''\sqcup J''=\{L+1,\ldots,M+N\}\,,
\\
&|I_i'|=k\,,\quad |J_i'|=L-k\,,
\quad |I''|=M-k\,,\quad |J''|=N-L+k\,.
\end{align*}

It remains to show that, 
after cancelling out the factors $ABC$ in the Lemma from both sides, 
this sum reduces to the shuffle product 
$\pi_{M,k}(F)*\pi_{N,L-k}(G)$. 
This is a lengthy but straightforward computation.
\end{proof}
\bigskip

Iterating the map in Proposition \ref{prop:fusion-glmn}, we obtain 
\begin{cor}\label{cor:shmn}
There exists a homomorphism of $\Z_{\ge0}$-graded algebras 
\begin{align}
\pi_{m|n}: &Sh^0_{m|n}(q_1,q_2,q_3)
\longrightarrow Sh^0_{m}(q_1q_1^{-\frac{n}{m}},q_2,q_3q_1^{\frac{n}{m}})
\otimes\Bigl(
\bigotimes_{j=0}^{n-1}Sh_1(q_2^{j+1}q_3^{-m+n},q_2^{-1},q_2^{-j}q_3^{m-n})\Bigr)\,.\label{pimn}
\end{align}
\end{cor}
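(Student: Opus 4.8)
The plan is to obtain Corollary \ref{cor:shmn} by iterating the fusion homomorphism $\pi^{m|n-1}_{m|n}$ of Proposition \ref{prop:fusion-glmn} a total of $n$ times, peeling off one bosonic $Sh_1$ factor at each stage. Concretely, I would define a composite map
\begin{align*}
\pi_{m|n}=\bigl(\pi^{m}_{m|1}\otimes\id\bigr)\circ\bigl(\pi^{m|1}_{m|2}\otimes\id\bigr)\circ\cdots\circ\bigl(\pi^{m|n-1}_{m|n}\bigr)\,,
\end{align*}
where at the $k$-th step I apply Proposition \ref{prop:fusion-glmn} (with $n$ replaced by $n-k+1$) to the $Sh^0_{m|n-k+1}$ tensor factor, carrying along the $Sh_1$ factors already produced as passive spectators via $\id$. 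Since each $\pi^{m|n'}_{m|n'-1}$ is a homomorphism of $\Z_{\ge0}$-graded algebras, and tensoring a homomorphism with the identity preserves the homomorphism property, the composite is again a homomorphism of graded algebras. The final step $\pi^{m}_{m|1}$ lands in $Sh^0_m\otimes Sh_1$, which explains why the base case $n=1$ of Proposition \ref{prop:fusion-glmn}, with its modified $A,C$ factors, was recorded separately.

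The main bookkeeping task is to track how the parameters transform under composition. At each application the first factor $Sh^0_{m|n'}(q_1^{(n')},q_2,q_3^{(n')})$ has its $q_1,q_3$ parameters rescaled according to \eqref{qbar}, namely $q_1\mapsto q_1 q_1^{-1/(m-n'+1)}$ and $q_3\mapsto q_3 q_1^{1/(m-n'+1)}$, while $q_2$ is untouched throughout. I would verify that the telescoping product of these rescalings over $n'=n,n-1,\dots,1$ collapses to the stated parameters $(q_1 q_1^{-n/m},q_2,q_3 q_1^{n/m})$ of the final $Sh^0_m$ factor; this is the identity $\prod_{n'=1}^{n}q_1^{-1/(m-n'+1)}=q_1^{-n/m}$ after accounting for the running value of the ``$q_1$'' parameter at each stage, which requires care because the exponent $1/(m-n'+1)$ is computed relative to the current first-factor parameters, not the original ones.

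The genuinely delicate point, and the one I expect to be the main obstacle, is pinning down the parameters $(q_2^{j+1}q_3^{-m+n},q_2^{-1},q_2^{-j}q_3^{m-n})$ of the $j$-th $Sh_1$ factor in \eqref{pimn}. The $Sh_1$ factor split off at the $k$-th stage is born with parameters $(q_1^{(n')\,-1}q_3^{(n')\,-m+n'-1},q_2^{-1},q_3^{(n')\,m-n'})$ in the variables of that stage, but these are expressed through the \emph{running} parameters $q_1^{(n')},q_3^{(n')}$, and moreover the specialization \eqref{xtoz} rescales the $z$ variables by powers of $q_3$ at every subsequent stage. I would therefore carefully rewrite each emerging $Sh_1$ parameter triple back in terms of the original $q_1,q_2,q_3$, absorbing the cumulative $q_3$-rescalings of the spectator $z$-variables, and check that the $k$-th factor reindexes to the claimed form with $j=n-k$. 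The nontrivial cancellation is that the fractional powers $q_1^{\pm 1/(m-n'+1)}$ appearing in intermediate $Sh_1$ parameters must combine with the explicit $q_3$-shifts to leave only the integer powers of $q_2$ and $q_3$ displayed in \eqref{pimn}; once this is confirmed for each $j$, the corollary follows. This parameter reconciliation is purely computational but must be done precisely, so I would present it as a short induction on $n$ with the parameter identities stated explicitly and the verification left as a direct check.
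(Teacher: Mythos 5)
Your proposal follows exactly the paper's route: the paper's entire proof of Corollary \ref{cor:shmn} is the one-line statement ``iterating the map in Proposition \ref{prop:fusion-glmn}'', i.e.\ precisely your composite with spectator $Sh_1$ factors carried by $\id$, and your parameter bookkeeping is the check the paper leaves implicit (the running parameters are $q_1^{(n')}=q_1^{(m-n)/(m-n')}$, $q_3^{(n')}=q_3q_1^{(n-n')/(m-n')}$, telescoping to $q_1q_1^{-n/m}$ and $q_3q_1^{n/m}$, and the $Sh_1$ factor born at stage $n'$ has first and third parameters $q_1^{-(n-n'+1)}q_3^{-(m-n'+1)}$ and $q_3^{m-n'}q_1^{n-n'}$, whose fractional powers indeed cancel and which $q_1q_2q_3=1$ converts to $(q_2^{j+1}q_3^{-m+n},q_2^{-1},q_2^{-j}q_3^{m-n})$ with $j=n-n'$). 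One small simplification: there is no ``cumulative $q_3$-rescaling of spectator $z$-variables'' to absorb---once an $Sh_1$ factor is split off it is acted on by $\id$ and its parameters are frozen, so the only task is rewriting each factor's birth parameters in terms of the original $q_1,q_2,q_3$, exactly as above.
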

\bigskip

\subsection{Fusion homomorphism for $Sh_{m}$}\label{sec:fusion-m}

The fusion homomorphism can be defined also for $Sh_m$. 
Since the argument is quite similar, we state only the result.

Let $m\ge2$. 
As before, we fix $N,L$ with $0\le L\le N$. Consider the specialization
of
variables from $x_i=(x_{i,a})_{1\le a\le N}$, $1\le i\le m$, to
$y_i=(y_{i,a})_{1\le a\le L}$, $1\le i\le m-1$, and 
$z=(z_{a})_{L+1\le a\le N}$, given by
\begin{align}
&\text{$1\le a \le L$}:&&x_{i,a}=
q_1^{-\frac{m-i}{m-1}}y_{i,a}\quad  \text{for $1\le i\le m$},
\label{xtoy-2}
\\
&\text{$L+1\le b \le N$}:&&x_{i,b}=
q_3^{i-1}z_b\quad \text{for $1\le i\le m$}.
\label{xtoz-2}
\end{align}
Here we set $y_{m,a}=y_{1,a}$. 
Let $\tilde{\rho}_{N,L}(F)$ denote the substitution given by 
\eqref{xtoy-2} and \eqref{xtoz-2}. 
For $F\in Sh^0_{m,N}$ we define $\tilde{F}_{N,L}$ by 
\begin{align*}
\tilde{\rho}_{N,L}(F)&=\tilde{F}_{N,L}(y_1;\cdots;y_{m-1}|z) 
\\
&\times\prod_{1\le a<b\le L}
\Bigl(d
\frac{(y_{1,a}-q_2y_{1,b})(y_{1,a}-q_2^{-1}y_{1,b})}
{(y_{1,a}-q_1y_{1,b})(y_{1,a}-q_1^{-1}y_{1,b})}
\Bigr)
\times
\prod_{1\le a,b\le L}
\prod_{i=1}^{m-1}
\frac{y_{i,a}-y_{i+1,b}}{y_{i,a}-q_1^{1/(m-1)}y_{i+1,b}}
\\
&\times
\prod_{L+1\le a<b\le N}
\Bigl(d
\frac{(z_a-q_2z_b)(z_a-q_2^{-1}z_b)}{(z_a-q_3z_b)(z_a-q_3^{-1}z_b)}
\Bigr)^{m-1}
\prod_{L+1\le a,b\le N}
\frac{d(z_a-z_b)^2}
{(z_a-q_3^{m-1}z_b)(z_a-q_3^{-m+1}z_b)}
\\
&\times
\prod_{1\le a\le L\atop L+1\le b\le N}
\Bigl(\prod_{i=2}^{m-1}
d
\frac{(q_1^{-(m-i)/(m-1)}y_{i,a}-q_1q_3^i z_b)}
{(q_1^{-(m-i)/(m-1)}y_{i,a}-q_3^i z_b)}
\frac{(q_1^{-(m-i)/(m-1)}y_{i,a}-q_1^{-1}q_3^{i-2} z_b)}
{(q_1^{-(m-i)/(m-1)}y_{i,a}-q_3^{i-2} z_b)}
\Bigr)
\\
&\times
\prod_{1\le a\le L\atop L+1\le b\le N}
d^2
\frac{(y_{1,a}-q_1^2q_3z_b)(y_{1,a}-q_3^{-1}z_b)}
{(y_{1,a}-q_1q_3z_b)(y_{1,a}-z_b)}
\frac{(y_{1,a}-q_1^{-1}q_3^{m-2}z_b)(y_{1,a}-q_1q_3^{m}z_b)}
{(y_{1,a}-q_3^{m-2}z_b)(y_{1,a}-q_1q_3^{m-1}z_b)}\,.
\end{align*}

\begin{prop}\label{prop:fusion_glm}
The map sending $F\in Sh^0_{m,N}$ to $\sum_{L=0}^N\tilde{F}_{N,L}$
gives
a homomorphism of $\Z_{\ge0}$-graded algebras 
\begin{align*}
\pi_m^{m-1}: Sh^0_{m}(q_1,q_2,q_3)
\longrightarrow Sh^0_{m-1}(\tilde{q}_1,q_2,\tilde{q}_3)
\otimes Sh_1(q_1q_3^{-m+1},q_2,q_3^{m})\,,
\end{align*}
where
\begin{align}
&\tilde{q}_1=q_1q_1^{\frac{1}{m-1}} \,,
\quad \tilde{q}_3=q_3q_1^{-\frac{1}{m-1}}\,.
\label{tilde-q}
\end{align}
\end{prop}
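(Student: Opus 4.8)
The plan is to imitate, almost verbatim, the proof of Proposition \ref{prop:fusion-glmn}, reducing the claim to an analysis of which index partitions survive the specialization \eqref{xtoy-2}, \eqref{xtoz-2}. Before the multiplicativity, I would first record the analogue of the Lemma preceding Proposition \ref{prop:fusion-glmn}: the wheel conditions (a), (b) of $Sh_m$ together with the manifest poles force the numerator of $\tilde F_{N,L}$ to be a Laurent polynomial, symmetric in each $y_i$ and in $z$, satisfying the wheel conditions of $Sh_{m-1}(\tilde q_1,q_2,\tilde q_3)$ in $(y_1;\cdots;y_{m-1})$ and of $Sh_1(q_1q_3^{-m+1},q_2,q_3^m)$ in $z$. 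This is exactly the statement that $\tilde F_{N,L}\in Sh^0_{m-1,L}(\tilde q_1,q_2,\tilde q_3)\otimes Sh_{1,N-L}(q_1q_3^{-m+1},q_2,q_3^m)$, and it is checked by the same direct inspection as in the $m|n$ case.

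For the homomorphism property I would take $F\in Sh^0_{m,M}$, $G\in Sh^0_{m,N}$, fix $L$ with $0\le L\le M+N$, and apply $\tilde\rho_{M+N,L}$ to the product \eqref{shfl}. For each partition $I_i\sqcup J_i=\{1,\dots,M+N\}$, $|I_i|=M$, I split $I_i=I_i'\sqcup I_i''$ and $J_i=J_i'\sqcup J_i''$ according to whether the index lies in $\{1,\dots,L\}$ or in $\{L+1,\dots,M+N\}$, and then ask which partitions give a nonzero contribution after specialization. Two families of factors acquire a zero at coinciding indices. In the fused block, colors $1$ and $m$ collapse to the single variable $y_1$ (note $x_{m,a}=y_{1,a}=q_1x_{1,a}$), and $\omega_{m,1}(x_{m,a},x_{1,b})$ specializes to $(y_{1,a}-y_{1,b})/(y_{1,a}-q_1^{-1}y_{1,b})$; its non-vanishing forces $I'_m\cap J'_1=\emptyset$, i.e. $I'_m\subseteq I'_1$. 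In the $z$-block, for $2\le i\le m$ the factor $\omega_{i,i-1}(q_3^{i-1}z_a,q_3^{i-2}z_b)$ is proportional to $z_a-z_b$, forcing the chain $I''_m\subseteq I''_{m-1}\subseteq\cdots\subseteq I''_1$. All remaining equal- and adjacent-color factors specialize to the structure functions of $Sh_{m-1}(\tilde q_1,q_2,\tilde q_3)$ (this is where \eqref{tilde-q} is used: $q_3q_1^{-1/(m-1)}=\tilde q_3$ and $q_1q_1^{1/(m-1)}=\tilde q_1$) and carry no zero at coinciding indices, so they impose no further restriction.

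Combining the two constraints with $|I_i'|+|I_i''|=M$ collapses the sum exactly as in Proposition \ref{prop:fusion-glmn}: the $z$-chain gives $|I'_m|\ge|I'_{m-1}|\ge\cdots\ge|I'_1|$, while the fused inclusion gives $|I'_m|\le|I'_1|$, so $|I'_1|=\cdots=|I'_m|=:k$, whence $I'_1=I'_m$ and $I''_1=\cdots=I''_m=:I''$. Thus $\tilde\rho_{M+N,L}(F*G)$ reduces to a sum over $k$, over free partitions $I'_i\sqcup J'_i=\{1,\dots,L\}$ of size $k$ for $i=1,\dots,m-1$, and over $I''\sqcup J''=\{L+1,\dots,M+N\}$ with $|I''|=M-k$. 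Cancelling from both sides the universal prefactor in the definition of $\tilde F_{N,L}$, these two independent sums become precisely the shuffle star products in $Sh_{m-1}(\tilde q_1,q_2,\tilde q_3)$ and in $Sh_1(q_1q_3^{-m+1},q_2,q_3^m)$, so the $k$-th term equals $\tilde F_{M,k}*\tilde G_{N,L-k}$ in the tensor-product target algebra; summing over $k$ and then over $L$ gives $\pi_m^{m-1}(F*G)=\pi_m^{m-1}(F)*\pi_m^{m-1}(G)$.

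The hard part, and essentially the only laborious step, is this last cancellation: one must verify that after removing the prefactor of $\tilde F_{N,L}$ the surviving $y$--$z$ cross factors reorganize exactly into the two shuffle products with the twisted parameters. This is the analogue of matching the factors $A,B,C$ in Proposition \ref{prop:fusion-glmn} and is a long but mechanical bookkeeping; in particular the several $z$--$z$ contributions coming from the $m$ colors must combine with the term $(z_a-z_b)^2/\bigl((z_a-q_3^{m-1}z_b)(z_a-q_3^{-m+1}z_b)\bigr)$ to reproduce the single cubic structure function of $Sh_1$ with roots $q_1q_3^{-m+1},q_2,q_3^m$. Finally, the degenerate low-rank case $m=2$ needs separate attention, since there both the source $Sh_2$ and the target factors $Sh_1$ carry the modified quadratic/cubic structure functions, so the matching of prefactors must be redone by hand; the partition analysis above, however, is unaffected.
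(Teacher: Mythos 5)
Your proposal is correct and follows exactly the route the paper intends: the paper omits the proof of Proposition \ref{prop:fusion_glm}, stating only that "the argument is quite similar" to Proposition \ref{prop:fusion-glmn}, and your reconstruction carries out precisely that argument — the preliminary lemma placing $\tilde F_{N,L}$ in the target space, the non-vanishing constraints $I'_m\subseteq I'_1$ (from the fused colors $1,m$) and $I''_m\subseteq\cdots\subseteq I''_1$ (from the $z$-block), the resulting collapse $|I'_1|=\cdots=|I'_m|$, and the final prefactor bookkeeping. Your identification of which specialized structure functions vanish at coinciding indices, and your flagging of the modified $m=2$ case, are both accurate.
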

\bigskip

\begin{cor}\label{cor:shm}
There exists a homomorphism of $\Z_{\ge0}$-graded algebras 
\begin{align}
\pi_m: Sh^0_{m}(q_1,q_2,q_3)
\longrightarrow 
\bigotimes_{j=0}^{m-1}  Sh_{1}(q_2^jq_1^m,q_2,q_2^{-j-1}q_1^{-m})\,.
\label{pim}
\end{align}
\end{cor}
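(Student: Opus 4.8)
The plan is to obtain $\pi_m$ by iterating the single-step fusion homomorphism of Proposition \ref{prop:fusion_glm}, splitting off one copy of $Sh_1$ at each stage while lowering the rank of the remaining $Sh^0$-factor, and then to verify that the parameters of the accumulated $Sh_1$-factors telescope to the values recorded in \eqref{pim}. Concretely, I would define
\begin{align*}
\pi_m=(\pi_2^1\otimes\id^{\otimes(m-2)})\circ\cdots\circ(\pi_{m-1}^{m-2}\otimes\id)\circ\pi_m^{m-1}\,,
\end{align*}
where at the stage of rank $m-k$ one applies the map of Proposition \ref{prop:fusion_glm} (with $m$ replaced by $m-k$) to the single $Sh^0$-tensorand and the identity to the previously split-off $Sh_1$-tensorands. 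Since each $\pi_{m-k}^{m-k-1}$ is a homomorphism of $\Z_{\ge0}$-graded algebras and tensoring with the identity preserves this property, the composite $\pi_m$ is automatically a homomorphism; the structural part of the statement therefore requires no further work, and everything reduces to identifying the target.

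For the parameter bookkeeping I would write $(q_1^{(k)},q_2,q_3^{(k)})$ for the parameters of the rank-$(m-k)$ factor $Sh^0_{m-k}$ reached after $k$ applications, with $q_1^{(0)}=q_1$ and $q_3^{(0)}=q_3$, noting that $q_2$ is never altered. Reading the rule \eqref{tilde-q} with $m$ replaced by $m-k$ gives the recursion
\begin{align*}
q_1^{(k+1)}=(q_1^{(k)})^{(m-k)/(m-k-1)}\,,\qquad q_1^{(k+1)}q_3^{(k+1)}=q_1^{(k)}q_3^{(k)}\,,
\end{align*}
so that $q_1^{(k)}q_2q_3^{(k)}=q_1q_2q_3=1$ is preserved at every stage. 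The first relation telescopes to the closed form
\begin{align*}
q_1^{(k)}=q_1^{m/(m-k)}\,,\qquad q_3^{(k)}=q_1^{-m/(m-k)}q_2^{-1}\,,
\end{align*}
where the fractional powers are interpreted through the branch of $\log q_1$ fixed in the Notation.

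Finally I would read off, from Proposition \ref{prop:fusion_glm}, that the factor split off at the $k$-th stage is $Sh_1\bigl(q_1^{(k)}(q_3^{(k)})^{-(m-k)+1},\,q_2,\,(q_3^{(k)})^{m-k}\bigr)$; substituting the closed forms above and simplifying should give exactly
\begin{align*}
Sh_1\bigl(q_2^{m-k-1}q_1^{m},\,q_2,\,q_2^{-(m-k)}q_1^{-m}\bigr)\,,
\end{align*}
the $j=m-k-1$ factor of \eqref{pim}. As $k$ runs over $0,\ldots,m-2$ this produces the factors $j=m-1,\ldots,1$, while the residual tensorand $Sh^0_1=Sh_1(q_1^{(m-1)},q_2,q_3^{(m-1)})=Sh_1(q_1^{m},q_2,q_2^{-1}q_1^{-m})$ furnishes the $j=0$ factor, accounting for all $m$ tensorands of \eqref{pim} in the displayed order. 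The only genuine labor is this telescoping computation of $q_1^{(k)},q_3^{(k)}$ together with the simplification of the split-off parameters; I expect the main care-point, rather than a true obstacle, to be keeping the fractional powers $q_1^{1/(m-k-1)}$ consistent across stages, which the fixed logarithm branch handles, and checking that the genericity hypothesis on the parameters survives each application so that Proposition \ref{prop:fusion_glm} may legitimately be reapplied.
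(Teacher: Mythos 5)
Your proposal is correct and is exactly the argument the paper intends: Corollary \ref{cor:shm} is obtained by iterating the single-step fusion homomorphism of Proposition \ref{prop:fusion_glm}, just as Corollary \ref{cor:shmn} is obtained by iterating Proposition \ref{prop:fusion-glmn}. Your telescoping computation $q_1^{(k)}=q_1^{m/(m-k)}$, $q_3^{(k)}=q_2^{-1}q_1^{-m/(m-k)}$, the identification of the stage-$k$ split-off factor with the $j=m-k-1$ tensorand of \eqref{pim}, and the residual $Sh_1(q_1^m,q_2,q_2^{-1}q_1^{-m})$ as the $j=0$ factor all check out.
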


\medskip

\section{Subalgebra $\cB_{m|n}(s)$}\label{Bethe sec}

\subsection{Subalgebra $\cB_{m|n}(s)$}\label{subsec:cB}

We introduce a family of subspaces $\cB_{m|n}(s)$ of $Sh_{m|n}$
defined by certain regularity conditions at $0$ and $\infty$ 
depending on parameters $s=(s_1,\dots,s_{m+n})$. 
We will show in Section \ref{G functions sec}
that for each $s$,  $\cB_{m|n}(s)$ is a free commutative subalgebra of $Sh_{m|n}$. We  call $\cB_{m|n}(s)$ the Bethe algebra. 
We expect that in relation to the quantum toroidal algebras, 
$\cB_{m|n}(s)$ is a deformation of the Cartan half currents of the affine $U_q\hat\gl_{m|n}\in\mc E_{m|n}$ which corresponds to  $s_1,\ldots,s_{m+n-1}\to 0$,
cf. Remark 3.11 [FT].

To this end, we prepare some notation.

Quite generally, for an element $F\in Sh_{m|n,\bN}$ and $\bs k=(k_1,\ldots,k_{m+n})\in\Z_{\geq 0}^{m+n}$
with $0\le k_i\le N_i$,
we set 
\begin{align*}
&F_\xi^{\bk} =F(\xi x_1',x_1'';\cdots;\xi x_{m+n}',x_{m+n}'')\,,
\end{align*}
where
\begin{align*}
&x_i'=(x_{i,1},\ldots,x_{i,k_i})\,,\quad 
x_i''=(x_{i,k_i+1},\ldots,x_{i,N_i})\,.
\end{align*}
We shall write
\begin{align}
&F_\infty^{\bk} =\lim_{\xi\to\infty}F_\xi^{\bk}\,,
\quad
F_0^{\bk} =\lim_{\xi\to0}F_\xi^{\bk}\,,
\label{limIZ}
\end{align}
provided these limits exist.

Further we fix an $(m+n)$-tuple of parameters
\begin{align*}
s=(s_1,\ldots,s_{m+n})\in(\C^{\times})^{m+n}
\quad \text{satisfying $\prod_{i=1}^{m+n}s_i=1$}.   
\end{align*}
We extend the suffix to all $i\in\Z$ by $s_{i+m+n}=s_i$. 
We shall say that $s$ is {\it generic} if 
\begin{align*}
q_1^{a_1}q_2^{a_2}\prod_{i=1}^{m+n-1}s_i^{b_i}=1\,,  
a_i,b_j\in\Z\,, 
\quad 
\text{holds only if $a_i=b_j=0$ for all $i,j$}.
\end{align*}

\begin{dfn}
We define the subspace  $\cB_{m|n,N}(s)$ to be the set of all elements
$F\in Sh^0_{m|n,N}$ such that both limits \eqref{limIZ} exist  
for all $\bk=(k_1,\ldots,k_{m+n})$ satisfying $0\le k_i\le N$,
and are related by
\begin{align}\label{ratio of limits}
F^{\bk}_\infty=F^{\bk}_0 \times 
\prod_{i=1}^{m+n} s_i^{k_i} \cdot d^{2N(k_m-k_{m+n})}\,.
\end{align}
\end{dfn}
Note that the power of $d$ is absent if $n=0$. 
\medskip

We set
\begin{align*}
\cB_{m|n}(s)
&=\bigoplus_{N=0}^\infty \cB_{m|n,N}(s)\,.
\end{align*}
Examples of elements in $\cB_{m|n}(s)$ will be given in 
Section \ref{sec:GrN} below.

\begin{lem}
The subspace $\cB_{m|n}(s)$ is a subalgebra of $Sh^0_{m|n}$.
\end{lem}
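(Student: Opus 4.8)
The goal is to show that if $F\in\cB_{m|n,M}(s)$ and $G\in\cB_{m|n,N}(s)$, then $F*G\in\cB_{m|n,M+N}(s)$. Since the shuffle product respects the $\Z_{\ge0}^{m+n}$-grading, $F*G\in Sh^0_{m|n,M+N}$ is immediate, so the content is to check that both limits \eqref{limIZ} of $(F*G)^{\bk}_\xi$ exist for every $\bk$ with $0\le k_i\le M+N$, and that they obey \eqref{ratio of limits} with $N$ replaced by $M+N$.

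The plan is to analyze the rescaling $(\,\cdot\,)^{\bk}_\xi$ directly on the shuffle sum \eqref{shfl}. For each color $i$ split $\{1,\dots,M+N\}$ into the rescaled block $P_i=\{1,\dots,k_i\}$ and its complement $Q_i$, and for a partition $I_i\sqcup J_i$ occurring in \eqref{shfl} set $A_i=I_i\cap P_i$, $B_i=J_i\cap P_i$, $C_i=I_i\cap Q_i$, $D_i=J_i\cap Q_i$, with $l_i=|A_i|$ and $\bl=(l_1,\dots,l_{m+n})$. Because every structure function $\omega_{i,j}(x,y)$ is homogeneous of degree zero, the factors $\omega_{i,j}$ joining two rescaled variables ($A$–$B$) or two unscaled variables ($C$–$D$) are independent of $\xi$, while those joining a rescaled to an unscaled variable ($A$–$D$ or $C$–$B$) are of the form $\omega_{i,j}(\xi x,y)$ or $\omega_{i,j}(x,\xi y)$ and converge as $\xi\to0,\infty$ to ratios of leading coefficients. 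The rescaled arguments of $F$ (resp. $G$) are exactly those in the blocks $A_i$ (resp. $B_i$); using the (skew)symmetry of $F$ and $G$ in each color, these factors tend to $\pm F^{\bl}_\infty$ and $\pm G^{\bk-\bl}_\infty$ as $\xi\to\infty$, and to $\pm F^{\bl}_0,\ \pm G^{\bk-\bl}_0$ as $\xi\to0$, the signs (and the $\sgn$ factors of \eqref{shfl}) being $\xi$-independent and hence common to both limits. Since $0\le l_i\le M$ and $0\le k_i-l_i\le N$, all these limits exist because $F\in\cB_{m|n,M}(s)$ and $G\in\cB_{m|n,N}(s)$; thus each term of \eqref{shfl}, and so $(F*G)^{\bk}_\xi$, has a limit in either direction.

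For the ratio it then suffices to show that the quotient of the $\xi\to\infty$ and $\xi\to0$ limits is the \emph{same} constant $\prod_i s_i^{k_i}\,d^{2(M+N)(k_m-k_{m+n})}$ for every term, for then this factor pulls out of the sum and yields \eqref{ratio of limits}. This per-term quotient is a product of three pieces: the $F$-quotient $\prod_i s_i^{l_i}d^{2M(l_m-l_{m+n})}$, the $G$-quotient $\prod_i s_i^{k_i-l_i}d^{2N((k_m-l_m)-(k_{m+n}-l_{m+n}))}$, and the product over the rescaled–unscaled $\omega$-factors of their individual $\infty/0$ ratios. The $s$-powers of the first two pieces combine to $\prod_i s_i^{k_i}$ independently of $\bl$, and the $\omega$-ratios carry no $s$; so the whole claim reduces to the cancellation of all $q_1,q_2,q_3$-powers and the emergence of exactly the $d$-power $2(M+N)(k_m-k_{m+n})$.

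The main obstacle is this last bookkeeping, which I would carry out as follows. Writing $r_{i,j}$ for the $\infty/0$ ratio of $\omega_{i,j}(\xi x,y)$ (so that of $\omega_{i,j}(x,\xi y)$ is $r_{i,j}^{-1}$ by degree-zero homogeneity), the $\omega$-contribution is $\prod_{i,j}r_{i,j}^{\,|A_i||D_j|-|C_i||B_j|}$. Substituting $|A_i|=l_i$, $|D_j|=N-k_j+l_j$, $|C_i|=M-l_i$, $|B_j|=k_j-l_j$, the exponent collapses to $Nl_i+Ml_j-Mk_j$, so the contribution equals $\prod_i R_i^{\,Nl_i}\cdot\prod_j S_j^{\,M(l_j-k_j)}$, with multiplicative row and column products $R_i=\prod_j r_{i,j}$, $S_j=\prod_i r_{i,j}$ taken over the at most three nonzero adjacent entries. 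Evaluating $r_{i,j}$ from the structure functions, one finds that every $q$-power cancels and that $R_i=S_j=1$ except at the fermionic colors, where $R_m=d^{2}$, $R_{m+n}=d^{-2}$, $S_m=d^{-2}$, $S_{m+n}=d^{2}$. Feeding these in gives the $\omega$-contribution $d^{\,2M(k_m-k_{m+n})-2(M-N)(l_m-l_{m+n})}$, whose $d$-power exactly compensates the $\bl$-dependence $d^{\,2(M-N)(l_m-l_{m+n})+2N(k_m-k_{m+n})}$ of the combined $F$- and $G$-quotients, leaving the $\bl$-independent constant $\prod_i s_i^{k_i}d^{2(M+N)(k_m-k_{m+n})}$. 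The only delicate point is the careful evaluation of the $R_i,S_j$ at the two fermionic colors and across the cyclic boundary between colors $m+n$ and $1$, where the branch of the structure function changes; once these are tabulated the cancellation is routine.
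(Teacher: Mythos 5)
Your proposal is correct and follows essentially the same route as the paper's proof: a term-by-term analysis of the shuffle sum, splitting each summand's $\infty/0$ ratio into the $F$-part, the $G$-part, and the structure-function part, and verifying that the product is the $\bl$-independent constant $\prod_i s_i^{k_i}\,d^{2(M+N)(k_m-k_{m+n})}$. Your explicit row/column bookkeeping $R_i$, $S_j$ (with $R_m=d^2$, $R_{m+n}=d^{-2}$, $S_m=d^{-2}$, $S_{m+n}=d^2$ and all others equal to $1$) is exactly the computation the paper compresses into its displayed limit ratios of $\omega_{i,j}$ and the line ``This yields \eqref{BmnO}.''
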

\begin{proof}
Consider the shuffle product \eqref{shfl} of
$F\in \cB_{m|n,M}(s)$ and $G\in \cB_{m|n,N}(s)$. In each summand
denote
\begin{align*}
&F_I=F(x_{1,I_1};\cdots;x_{m+n,I_{m+n}})\,,
\quad 
G_J=G(x_{1,J_1};\cdots;x_{m+n,J_{m+n}})\,,
\\
&\omega_{I,J}=\prod_{i,j=1}^{m+n}\prod_{a\in I_i\atop b\in J_j}
\omega_{i,j}(x_{i,a},x_{j,b})\,.
\end{align*}
Let $h_i=|I_i\cap \{1,\ldots,k_i\}|$, $l_i=|J_i\cap \{1,\ldots,k_i\}|$,
so that $h_i+l_i=k_i$.
Then 
\begin{align}
&(F_I)^{\bk}_\infty= (F_I)^{\bk}_0\cdot \prod_{i=1}^{m+n}s_i^{h_i}\cdot 
d^{2M(h_m-h_{m+n})}\,,
\label{BmnF}
\\
&(G_J)^{\bk}_\infty= (G_J)^{\bk}_0
\cdot \prod_{i=1}^{m+n}s_i^{l_i}\cdot d^{2N(l_m-l_{m+n})}\,.
\label{BmnG}
\end{align}
On the other hand, it is easy to see that
\begin{align*}
&\lim_{\xi\to\infty}\omega_{i,i}(\xi x,y)
=\lim_{\xi\to0}\omega_{i,i}(\xi x,y)\cdot  
q_2^{-\theta(1\le i\le m-1)+\theta(m+1\le i\le m+n-1)}\,,
\\
&\lim_{\xi\to\infty}\omega_{i,i+1}(\xi x,y)
=\lim_{\xi\to0}\omega_{i,i+1}(\xi x,y) \cdot
q_1^{-\theta(0\le i\le m-1)+\theta(m\le i\le m+n-1)}\,,
\\
&\lim_{\xi\to\infty}\omega_{i,i-1}(\xi x,y)
=\lim_{\xi\to0}\omega_{i,i-1}(\xi x,y) \cdot
q_3^{-\theta(1\le i\le m)+\theta(m+1\le i\le m+n)}\,,
\end{align*}
and that
\begin{align*}
\frac{\lim_{\xi\to\infty}\omega_{i,j}(x,\xi y)}
{\lim_{\xi\to0}\omega_{i,j}(x,\xi y) }
=
\Bigl(\frac{\lim_{\xi\to\infty}\omega_{i,j}(\xi x,y)}
{\lim_{\xi\to0}\omega_{i,j}(\xi x,y) }\Bigr)^{-1}
\,.
\end{align*}
This yields
\begin{align}
(\omega_{I,J})^{\bk}_\infty =(\omega_{I,J})^{\bk}_0 \cdot
d^{2N(h_m-h_{m+n})+2M(l_m-l_{m+n})}\,.
\label{BmnO}
\end{align} 
It follows from \eqref{BmnF}, \eqref{BmnG} and \eqref{BmnO} that
\begin{align*}
(F*G)^{\bk}_\infty =(F*G)^{\bk}_0\cdot 
\prod_{i=1}^{m+n}s_i^{k_i}\cdot d^{2(M+N)(k_m-k_{m+n})}\,.
\end{align*}
This completes the proof.  
\end{proof}
\medskip

We call the subalgebra $\cB_{m|n}(s)$ the Bethe 
algebra.

\begin{rem}
For $n=0$, the subspace $\cB_{m|0}(s)$ is a slight modification of 
$\cA(\bar{s})$ studied in \cite{FT}.
Though they turn out to coincide for generic $s$, 
it is a priori not obvious that the latter is a subalgebra. \qed
\end{rem}

\subsection{Gordon filtration}\label{sec:Gordon}

Gordon filtration is a standard method used to study spaces of polynomials with vanishing conditions on the planes of codimension higher than one. It is designed in such a way that associated graded spaces become spaces of polynomials without conditions. 

In this section we apply the Gordon filtration technique to 
obtain an estimate from above to the dimensions of the graded components $\mc B_{m|n,N}(s)$  of 
the Bethe algebra. 
Our proof is similar to the proof of Lemma 2.4 in \cite{FT}. 

Let $\mc R_{m|n}$ denote the graded polynomial algebra in variables $\{T_{i,\nu}\ |\ 1\leq i\leq m+n,\nu\in\Z_{>0}\}$ with $\deg T_{i,\nu}=\nu$. Let $\mc R_{m|n,N}$ denote the degree $N$ component of $\mc R_{m|n}$.

\begin{prop}\label{prop:Gordon}
Assume $s$ is generic and $m\neq n$. Then $\dim \mc B_{m|n,N}(s)\leq \dim \mc R_{m|n,N}$.
\end{prop}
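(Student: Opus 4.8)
The plan is to bound $\dim \mc B_{m|n,N}(s)$ by a Gordon filtration whose index set is put in bijection with the monomials of $\mc R_{m|n,N}$, so that the bound becomes a summation over graded pieces each of dimension at most one. Recall that a degree-$N$ monomial of $\mc R_{m|n}$ is the same datum as an $(m+n)$-tuple of partitions $\bs\la=(\la^{(1)},\ldots,\la^{(m+n)})$ with $\sum_{i}|\la^{(i)}|=N$; the count of such $\bs\la$ is exactly $\dim\mc R_{m|n,N}$. To each $\bs\la$ I would attach a specialization of the $(m+n)N$ variables and a corresponding filtration step, arranged so that the filtration is exhaustive and separated. Then $\dim\mc B_{m|n,N}(s)=\sum_{\bs\la}\dim\mathrm{gr}_{\bs\la}$, and it suffices to prove $\dim\mathrm{gr}_{\bs\la}\le 1$ for every $\bs\la$.

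First I would set up the specializations. Read $\bs\la$ as prescribing, for each color $i$, a decomposition of its $N$ variables into $\ell(\la^{(i)})$ clusters whose sizes are the parts of $\la^{(i)}$, a part $\nu$ of $\la^{(i)}$ being the footprint in color $i$ of a putative degree-$\nu$ generator. Since every element of $\mc B_{m|n,N}(s)$ lies in $Sh^0_{m|n,N}$ and hence carries $\nu$ variables of every color per degree-$\nu$ generator, I group the full variable set into $\ell=\sum_i\ell(\la^{(i)})$ blocks, one per part across all colors, where a part $\nu$ of $\la^{(i)}$ gives a block occupying $\nu$ variables of every color, specialized to an explicit geometric configuration (a $q_2^{\pm1}$-string in each color, with relative shifts between neighboring colors dictated by the structure functions $\omega_{i\pm1,i}$, and the fermionic colors $m,m+n$ handled via the skew-symmetry and the trivial zeros \eqref{trivialzero}) around a single free center $w_t$. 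The configurations are chosen to avoid the loci forced to vanish by the wheel conditions (iii)(a)--(f), so that the leading coefficient of $F$ along the specialization is not identically zero. Ordering the $\bs\la$ by a total refinement of the dominance order and declaring $F$ to lie in the step $\le\bs\la$ when its specializations at all strictly larger $\bs\mu$ vanish gives the filtration, with $\mathrm{gr}_{\bs\la}(F)$ the leading coefficient of the $\bs\la$-specialization. Using the pole structure (only simple poles between neighboring colors) together with the wheel conditions, one checks, as in the proof of Lemma 2.4 of \cite{FT}, that after dividing by an explicit product of linear factors this leading coefficient is a genuine Laurent polynomial $P(w_1,\ldots,w_\ell)$, symmetric under permuting blocks of equal color and equal length.

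It then remains to cut this space of symmetric Laurent polynomials down to dimension one, and here I use the defining relation \eqref{ratio of limits}. For choices of $\bk$ in \eqref{limIZ} adapted to sub-configurations of the blocks, the existence of both limits forces, for each center, the top degree in $w_t$ of $P$ to equal its bottom degree, while the ratio in \eqref{ratio of limits} prescribes the value of the remaining partial degrees as explicit monomials in $q_1,q_2,s_1,\ldots,s_{m+n}$ (recall $d=qq_1$, so the factor $d^{2N(k_m-k_{m+n})}$ is such a monomial). Because $s$ is generic, the weights attached to the various $\bk$ are multiplicatively independent enough that these relations admit a unique solution, pinning every partial degree of $P$. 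A symmetric Laurent polynomial in which each variable occurs in a single fixed degree is a scalar multiple of a fixed monomial $(w_1\cdots w_\ell)^{e}$, whence $\dim\mathrm{gr}_{\bs\la}\le 1$. Summing over all $\bs\la$ yields $\dim\mc B_{m|n,N}(s)\le\#\{\bs\la\}=\dim\mc R_{m|n,N}$.

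The main obstacle is the construction and control of the specializations in the super case. One must exhibit, for each $\bs\la$, an admissible block configuration whose leading coefficient is simultaneously pole-free and wheel-free and captures the full set of degrees of freedom, and that the family of these configurations separates points of $\mc B_{m|n,N}(s)$ (so that $\mathrm{gr}$ is faithful). This is exactly where the cubic conditions (a),(b),(d),(e), the quartic fermionic conditions (c),(f), and the skew-symmetry of the two fermionic colors interact in a way with no counterpart in \cite{FT}. The second delicate point is the non-resonance step forcing a single monomial: the weight bookkeeping involves the $d$-powers distinguishing the two fermionic colors $m$ and $m+n$, and it is here that the hypothesis $m\neq n$ enters, the equal-rank case producing a coincidence of weights (compatible with the extra symmetry \eqref{cyclmn}) that defeats the argument and is therefore excluded throughout.
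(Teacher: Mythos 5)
Your skeleton (a Gordon filtration whose effective index set is the monomials of $\mc R_{m|n,N}$, with each graded piece of dimension at most one) matches the paper's strategy, but the two steps that carry all the weight do not hold as you state them.

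The first and fatal gap is your mechanism for one-dimensionality of the graded pieces. You claim that the relations \eqref{ratio of limits}, applied to scalings adapted to the blocks, force the specialized leading coefficient $P(w_1,\ldots,w_\ell)$ to be concentrated in a single degree in each $w_t$, hence to be a monomial, with genericity of $s$ pinning the exponents. This cannot work. For a scaling of a full cyclic block ($k_i=\nu$ for all $i$) the prescribed ratio is $\prod_{i=1}^{m+n}s_i^{\nu}\cdot d^{0}=1$, independent of $s$, so genericity gives no constraint at this stage; and existence of the two limits only bounds the degree of $P$ in $w_t$ above and below by two \emph{different} integers (the numbers of denominator factors containing at least one, respectively two, scaled variables), while the ratio relates only the two extreme coefficients and says nothing about intermediate degrees. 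A concrete counterexample already occurs for $m=1$, $n=0$, $N=2$, $\bs\la=(1,1)$: the graded piece there is spanned by $P(w_1,w_2)=(w_1-q_3w_2)(w_1-q_3^{-1}w_2)$, which is one-dimensional but visibly not a monomial, and all partial degrees $0,1,2$ in $w_1$ occur. What actually pins the graded piece down, in the paper as in \cite{FT}, is a factor count: the specialized numerator is homogeneous of total degree $(m+n)N^2$ and is shown to be divisible by $(m+n)N^2$ explicit linear forms, produced by (i) existence of the limits (divisibility by powers of $w_t$), (ii) the wheel conditions firing on the specialization, (iii) skew-symmetry in the two fermionic colors, and (iv) vanishing under the larger specializations in the poset, with a separate argument handling coinciding factors via derivatives. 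Your proposal contains no trace of this count, and the degree-pinning argument you substitute for it is false.

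The second gap is separation. Because you index the filtration only by cyclic data, the bound $\dim \mc B_{m|n,N}(s)\le\sum_{\bs\la}\dim\mathrm{gr}_{\bs\la}$ requires that an element of $\mc B_{m|n,N}(s)$ vanishing under all \emph{cyclic} specializations be zero; you assert this ("exhaustive and separated") but give no argument, and it is not automatic. The paper proves separation by working with the full poset of $N$-admissible sets of intervals, including non-cyclic ones (for instance the vanishing at $x_{i+1,r}=q_3x_{i,l}$ comes from an interval of length two): the intersection of \emph{all} kernels inside $Sh^0_{m|n,N}$ is one-dimensional, spanned by an explicit function that violates \eqref{ratio of limits}. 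The non-cyclic indices are then eliminated by showing their graded pieces vanish, and \emph{that} is exactly where \eqref{ratio of limits} and genericity of $s$ are used: the ratio of limits of a specialized function is a monomial in $q_1,q_3$ times a power of $y_j$, and for generic $s$ it can match $\prod_i s_i^{k_i}d^{2N(k_m-k_{m+n})}$ only if every interval has length divisible by $m+n$. So genericity does a job in the non-cyclic strata that your setup omits, while the job you assign to it in the cyclic strata it cannot do. (Relatedly, $m\neq n$ enters not through a "coincidence of weights" but because it makes the constants $\al_a$ attached to distinct same-color entries of an interval distinct, $\al_{a+k(m+n)}=q_3^{(m-n)k}\al_a$, without which the fermionic skew-symmetry would kill the evaluation maps; and note the paper's specializations are $q_3^{\pm1}$-progressions winding through the colors, not $q_2^{\pm1}$-strings within each color, a choice tuned so that the wheel conditions produce the factors needed in the count.)
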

\begin{proof}
Fix $N\in\Z_{\geq 0}$ and denote  $B=\mc B_{m|n,N}$ and $Sh=Sh^0_{m|n,N}$.

We will define a finite partially ordered set $P$ and for each $p\in P$ we will give a subspace $B_p\subset B$ and show that $\cap_p B_p=0$. That will produce a bound:
$$
\dim B\leq \sum_{p\in P} \dim \on{gr} B_p, \qquad \on{gr} B_p= \Big(\mathop{\cap}_{p'>p} B_{p'}\Big)/\Big( \mathop{\cap}_{p'\geq p} B_{p'}\Big).
$$
Here, if $p\in P$ is a maximal element, then we set $\mathop{\cap}_{p'>p} B_{p'}=B$.

Indeed, refine the partial order $>$ on $P$ to a total order $\succ$ and set 
$\tilde B_p=\mathop{\cap}_{p'\succ p}  B_{p'}$.
Then $p\succ p'$ implies $\tilde B_p\supset \tilde B_{p'}$. And, therefore, we have a linear  filtration $\tilde B_p$ on $B$ and therefore as a vector space 
$$
B\simeq\oplus \on{gr} \tilde B_p,\qquad \on{gr} \tilde B_p=\Big(\mathop{\cap}_{p'\succ p} B_{p'}\Big)/\Big( \mathop{\cap}_{p'\succeq p} B_{p'}\Big).
$$

For any subspaces $A_1,A_2,C$, with $A_2\subset A_1$, we have an injection $(A_1\cap C)/(A_2\cap C)\to A_1/A_2$. 
Therefore for every $p$ we have an injective map 
$\on{gr} \tilde B_p\to  \on{gr} B_p$ which corresponds to 
$A_1=\mathop{\cap}_{p'> p} B_{p'}$, 
$A_2=\mathop{\cap}_{p'\geq p} B_{p'}$,
$C=\mathop{\cap}_{p'\succ p} B_{p'}.$ Thus we have the upper bound on the dimension of $B$.

After that we will argue that $\on{gr} B_p$ are at most one dimensional, and in many cases are trivial which will result in the proof of the proposition.

\medskip

We start by defining the set $P$. 

For $a,b\in\Z$, denote the interval $[a,b]=\{a,a+1,\dots,b\}\subset\Z$. The interval $[a,b]$ is non-empty if and only if $a\leq b$. For $i=1,\dots,m+n$ denote $[a,b]_i$ the number of elements of $[a,b]$ congruent to $i$ modulo $m+n$. Thus modulo $m+n$ we have 
$$
[a,b]\equiv \{\underbrace{1,\dots,1}_{[a,b]_1},\underbrace{2,\dots,2}_{[a,b]_2},\dots,\underbrace{m+n,\dots,m+n}_{[a,b]_{m+n}} \}.
$$
Intervals $[a,b]$ and $[c,d]$ are called equivalent if they are shifts by a multiple of $m+n$:  $a-c=b-d=(m+n)k$, for some $k\in\Z$. If $[a,b]$ and $[c,d]$ are equivalent then $[a,b]_i=[c,d]_i$ for all $i$. The converse is not true, for example $[1,m+n]$ and $[2,m+n+1]$ are not equivalent but $[1,m+n]_i=[2,m+n+1]_i=1$ for all $i$.

An interval $[a,b]$ is called cyclic if its number of elements is a multiple of $m+n$: $b-a+1=(m+n)k$, $k\in\Z_{>0}$. In other words, an interval $[a,b]$ is cyclic if and only if $[a,b]_i$ does not depend on $i$.

For $\bs a=(a_1,\dots,a_s)$, $\bs b=(b_1,\dots,b_s)$, $a_j\leq b_j$, denote $[\bs a,\bs b]$ the unordered set of intervals $[a_j,b_j]$.  

The set of intervals $[\bs a,\bs b]$ is called $N$-admissible if 
intervals $[a_j,b_j]$ are non-empty for all $j$ and $\sum_{j=1}^s[a_j,b_j]_i=N$ for all $i$.

Two sets of intervals $[\bs a,\bs b]$ and $[\bs c,\bs d]$ are called equivalent if there exists an ordering of intervals in $[\bs a,\bs b]$ such that $[a_j,b_j]$ is equivalent to $[c_j,d_j]$ for all $j$. 

The set of intervals $[\bs a,\bs b]$ is called cyclic if intervals $[a_j,b_j]$ are cyclic for all $j$. Note that $\dim \mc R_{m|n,N}$ equals the number of equivalence classes of $N$-admissible cyclic sets of intervals.

The set $P$ is the set of all equivalence classes of $N$-admissible intervals $[\bs a,\bs b]$.

\medskip

Next we define an order on $P$. 

A partition of $M$ is an ordered set of non-negative integers called parts which sum up to $M$: $\la=(\la_1,\dots,\la_s)$, $\la_1\geq \dots\geq \la_s\geq 0$,  $\sum_{j=1}^s \la_j=M$.  Define the lexicographical partial ordering on set of partitions:
$\la>\mu$ if for some $j$, $\la_j>\mu_j$ and $\la_1=\mu_1$, $\la_2=\mu_2$, $\dots$, $\la_{j-1}=\mu_{j-1}$.

Given a set of intervals $[\bs a,\bs b]$, let $\la_{[\bs a,\bs b]}$ be the partition of $(m+n)N$ with parts $b_j-a_j+1$ written in the non-decreasing order. We give a partial order on $P$ according to the order on the corresponding partitions: we set $[\bs a,\bs b]>[\bs c, \bs d]$ if   $\la_{[\bs a,\bs b]}>\la_{[\bs c, \bs d]}$. If  $\la_{[\bs a,\bs b]}=\la_{[\bs c, \bs d]}$ and $[\bs a,\bs b]\neq [\bs c, \bs d]$, then $[\bs a,\bs b]$ and $[\bs c, \bs d]$ are not comparable.

\medskip

Now we define the subspaces $B_{[\bs a,\bs b]}$, $[\bs a,\bs b]\in P$. 
Given $[\bs a,\bs b]\in P$, we define the substitution of variables $(x_{1,1},\dots,x_{1,N},\dots,x_{m+n,1},\dots, x_{m+n,N})$ appearing in $Sh$ to the variables $y_1,\dots, y_s$ multiplied by some constants as follows.

For $a\in \Z$ define recursively constants $\al_a$ by the rule $\al_0=1$, $\al_{a+1}=q_3\al_a$ if $a\equiv i$ (mod $m+n$) with $0\leq i\leq m-1$ and  $\al_{a+1}=q_3^{-1}\al_a$ if $a\equiv i$ (mod $m+n$) with $m\leq i\leq m+n-1$. Thus $\al_a$ is a power of $q_3$, and $\al_{k(m+n)}=q_3^{(m-n)k}$. In particular, since $m\neq n$,  $a\equiv b$ together with $a\neq b$ implies $\al_a\neq \al_b$.

Since $[\bs a,\bs b]\in P$ is $N$ admissible, there exists a bijection $\tau$ between the set $[1,m+n]\times[1,N]$ 
with the
union of intervals $[a_j,b_j]$:
$$
\tau:\ \{1,\dots,m+n\} \times \{1,\dots, N\} \to \sqcup_{j=1}^s \{a_j,a_j+1,\dots, b_j\}.
$$
Let $\tau_1(i, j) \in\{1,\dots, s\}$ be the label of the interval of $\tau(i\times j)$, and let  $\tau_2(i, j)\in [a_{\tau_1(i, j)},b_{\tau_1(i, j)}]$ be the number in that interval
given by  $\tau(i\times j)$. Thus $\tau(i\times j)=\tau_2(i,j)\in [a_{\tau_1(i,j)},b_{\tau_1(i,j)}]$.

We choose $\tau$ in such a way that the color is preserved: $\tau_2(i,j)\equiv i\ (\on{mod} m+n)$. Clearly such a bijection exists, and it is not unique in general.

We view $\tau$ as a substitution of variables $x_{i,j}$ appearing in $Sh$ substituting $x_{i,j}$ to $\al_{\tau_2(i, j)}\,y_{\tau_1(i, j)}$.

Recall that functions $F\in Sh$ have the form
$$
F=\frac{f(x_1;\dots;x_{m+n})}{\Pi}\qquad \Pi={\prod_{i=1}^{m+n}\prod_{r,l=1}^N(x_{i,r}-x_{i+1,l})},
$$
where the numerator $f$ is a Laurent polynomial of variables $x_{i,j}$ symmetric in bosonic variables with the same first index, skew-symmetric in the fermionic variables with the same first index and satisfying the wheel conditions.

If $F\in\B$, then the Laurent polynomial $f$ is actually a polynomial since the limit as $x_{i,j}\to 0$ exists.
The total degree of polynomial $f$ is the same as the total degree of $\Pi$, that is $(m+n)N^2$ since the limits when all variables go to infinity, $F_\infty^{(N,\dots,N)}$, or to zero 
$F_0^{(N,\dots,N)}$, are well-defined. In each variable the degree is at most $2N$ since the limit when this variable goes to infinity is well-defined.

Define an evaluation map  
$$
\phi_{[\bs a,\bs b]}: \  Sh \to \C[y_1,\dots,y_s] , \qquad 
 F(x_{i,j})\mapsto f(\tau(x_{i,j})),
$$
substituting all variables $x_{i,k}$ in the numerator of $F\in Sh$ to $y_j$ according to the substitution $\tau$.

Note that because of the symmetry of polynomial $f\in Sh$ the result of substitution up to a sign is independent of the choice of the bijection $\tau$. In particular the kernel of the evaluation map $\Ker \phi_{[\bs a,\bs b]}\subset Sh$ does not depend on the choice of $\tau$.

Note that if $[\bs a,\bs b]$ and $[\bs c,\bs d]$ are equivalent:
$a_j-c_j=b_j-d_j=Nk_j$ then $\phi_{[\bs a,\bs b]}$ coincides with $\phi_{[\bs c,\bs d]}$ followed by the shift  $y_j\to q_3^{k_j(m-n)}y_j$ for all $j$. In particular, $\Ker \phi_{[\bs a,\bs b]}= \Ker \phi_{[\bs c,\bs d]}$.

Note that $m\neq n$ is important here since $F\in Sh$ vanishes if two fermionic variables are set equal and therefore the evaluation map would be zero in the case when one of the intervals $[a_j,b_j]$ contains two numbers congruent to $m$ or  to $m+n$. 

Note also that (unlike the purely bosonic case), in some cases, the evaluation applied to the denominator $\Pi$ can give a zero.

Define 
$$
B_{[\bs a ,\bs b]}=\on{Ker} \phi_{[\bs a, \bs b]}, \qquad 
\bar B_{[\bs a ,\bs b]}=\mathop{\cap}_{[\bs c ,\bs d]>[\bs a ,\bs b]}B_{[\bs c ,\bs d]}.
$$

\medskip
We claim that $\cap_{[\bs a,\bs b]}B_{[\bs a ,\bs b]}=0$. Indeed, let $F=f/\Pi\in Sh$ vanish under all evaluation maps. In particular, it has to vanish at $x_{i+1,r}=q_3x_{i,l}$ if $i$ is before the equator and at $x_{i+1,r}=q_3^{-1}x_{i,l}$ if $i$ is after the equator. Thus the polynomial $f$ of  degree $(m+n)N^2$ has $(m+n)N^2$ linear factors and is determined up to a constant. Thus
$$
F=\frac{c}{\Pi} \ \prod_{i=0}^{m-1}\prod_{r,l=1}^N(x_{i+1,r}-q_3x_{i,l})\prod_{i=m}^{m+n-1}\prod_{r,l=1}^N(x_{i+1,r}-q_3^{-1}x_{i,l}),
$$
where $c$ is a constant. But such $F$ is clearly not in $B$.
\medskip

Next we study $\on{gr} B_{[\bs a ,\bs b]}=\phi_{[\bs a ,\bs b]}(\bar B_{[\bs a ,\bs b]})$. We now show that this space is at most one dimensional.

Denote $g=\phi_{[\bs a ,\bs b]}(F)$. Recall that $\deg g=\deg f=(m+n)N^2$.
We claim that $g$ has $(m+n)N^2$ linear  
factors, one for each factor of $\Pi$, and therefore it is unique up to a constant which implies $\on{gr}B_p$ is one-dimensional.

Let us say a variable $x_{i,r}$ has type $j$ if it specializes to a multiple of $y_j$: $\tau_1(i,r)=j$.

First, $g$ is divisible by $y_j^{r_j}$ where $r_j$ is the number of factors $x_{i,r}-x_{i+1,l}$ in $\Pi$ such that both 
$x_{i,r}$ and $x_{i+1,l}$ have type $j$.
This is because the limit of $F$ as all variables of type $j$ go to zero should be well defined. 

Next we consider linear factors of $g$ of the form $y_j-\al y_{j'}$, $j\neq j'$ and $\al\in q_1^\Z q_3^\Z$.  For that we fix an order $y_1,\dots,y_s$ in such a way that the corresponding intervals are ordered: $b_i-a_i\geq b_{i+1}-a_{i+1}$. 

Consider a factor  $x_{i,r}-x_{i+1,l}$ in $\Pi$ where the variables $x_{i,r}$, $x_{i+1,l}$ are specialized to two different variables.  
Suppose $\tau(i\times r)$ is $c\in[a_j,b_j]$ 
and $\tau((i+1)\times l)$ is $c'\in [a_{j'},b_{j'}]$. We present a corresponding zero as follows.

If $j>j'$  and $c'$ is bosonic, $c'\not\equiv m,m+n$ (mod $m+n$), and $c<b_j$, then we obtain a zero from the cubic wheel condition corresponding to variables $x_{i,r},x_{i+1,l}$  and the variable which corresponds to $(c+1)\in [a_j,b_j]$. It guarantees that $g$ is divisible by $y_j-\al y_{j'}$ with $\al= q_1^{\pm 1}\al_{c'}/\al_c$. Here we have $q_1$ if $c'$ is before the equator, 
that is $c'\equiv 1,\dots,m-1$ modulo $m+n$, and $q_1^{-1}$ if $c'$ is  after equator, that is 
$c'=\equiv m+1,\dots,m+n-1$ (mod $m+n$). 

In the case $c=b_j$, we obtain a factor $y_j-\al y_{j'}$ with 
$\al=q_3^{\pm1}\al_{c'}/\al_c $ due to the fact that $f$ should vanish under substitutions corresponding to larger sets of intervals in the sense of the order in $P$.

If $j'>j$  and $c$ is bosonic, $c\not\equiv m,m+n$ and $c'>a_{j'}$, then we similarly obtain a zero from the cubic wheel condition corresponding to variables $x_{i,r},x_{i+1,l}$  and the variable which corresponds to $(c'-1)\in [a_{j'},b_{j'}]$. In the case $c'=a_{j'}$, we again obtain a factor $y_j-\al y_{j'}$ with 
$\al=q_3^{\pm1}\al_{c'}/\al_c $ due to the fact that $f$ should vanish under larger substitutions.

Let $j>j'$  and let $c'$ be fermionic, $c'\equiv m$ or $c'\equiv m+n$.  We have three cases.

If $c+1<b_j$ then we use the quartic wheel condition involving  $x_{i,r},x_{i+1,l}$ and the variables which correspond to $c+1, c+2\in [a_j,b_j]$. Moreover, in this case we also have a factor coming from the skew-symmetry with respect to  $x_{i+1,l}$ and the variable corresponding to $c+1\in [a_j,b_j]$. This zero corresponds to the factor of the denominator $\Pi$ which is a difference of $x_{i+1,l}$ and the variable which corresponds to $c+2\in [a_j,b_j]$. 

If $c+1=b_j$, we get the factor from the skew-symmetry with respect to  $x_{i+1,l}$  and the variable which corresponds to $c+1\in [a_j,b_j]$.

If $c=b_j$, we get a factor due to the fact that $f$ should vanish under larger substitutions.

This covers all factors in the denominator. It remains to point out 
that some of the linear factors we discussed coincide. However, all coinciding factors of the form $y_j-\al y_{j'}$, $j'<j$, come from using conditions each of which uses a single variable of type $j'$. Moreover, all these variables of type $j'$ are distinct. Therefore, using the derivatives with respect to these variables, we see that 
we get $(y_j-\al y_{j'})^t$ where $t$ is the number of times this factor appeared in our construction.

\medskip

Finally we claim the  $\on{gr} B_{[\bs a ,\bs b]}$ is trivial unless $[\bs a ,\bs b]$ is cyclic.
Indeed, fix $j$ and consider the limits of $F\in B\subset Sh$ which sends variables of type $j$ to zero or to infinity. Substitute $x_{i,r}\to \xi x_{i,r}$ in $F$  for variables $x_{i,r}$ of type $j$.
Then we have factors in the denominator $\Pi$ which do not depend on $\xi$. These are binomials, where variables either both have type $j$ or both do not have type $j$. These factors cancel when we take the ratio of the limits $\xi\to 0$ and $\xi\to \infty$. Therefore we can remove these factors. After that we proceed by applying the evaluation as in $\phi_{[\bs a,\bs b]}$. Take now the ratio of the limits $\xi\to 0$ and $\xi\to \infty$.
Then the result is a possible monomial in 
$y_j^{\pm1}$ 
with a possible coefficient $q_1^a q_3^b$, $a,b\in\Z$. By definition, since $F\in B$, this ratio equals to a product of $s_i$ multiplied by a power of $d$ as in \eqref{ratio of limits}.
 Since $s$ is generic, it follows that the length of $j$-th interval has to be a multiple of $m+n$.

\medskip

Combining, we see that the dimension of $B$ is at most the number of non-empty $N$-admissible cyclic sets of intervals which is exactly $\dim \mc R_{m|n,N}$.
\end{proof}

\medskip

\begin{rem}
The same  proof shows that if there exists a non-zero $F\in Sh_{m|n,\bs N}$
such that both limits \eqref{limIZ} exist for all $\bs k=(k_1,\dots,k_{m+n})$ satisfying $0\leq k_i\leq N_i$ and such that \eqref{ratio of limits} holds, then $\bs N=(N,\dots, N)$ for some $N\in\Z_{\geq 0}$. 

In other words, the Bethe algebra indeed belongs to $Sh^0_{m|n}$.\qed
\end{rem}

\section{Generators and commutativity of $\cB_{m|n}(s)$} \label{G functions sec}

\subsection{Functions $I^c_{M,N}$}\label{sec:IcN}

Before proceeding, we prepare and study
some auxiliary rational functions which will be used
in the next subsection to construct a family of
elements of the subalgebra $\cB_{m|n}(s)$. 

For variables $y=(y_1,\ldots,y_M)$ and $z=(z_1,\ldots,z_N)$,  
we shall use the notation
\begin{align*}
&\Delta(y)=\prod_{1\le a<b\le M}(y_a-y_b)\,,
\quad \bar{y}=\prod_{a=1}^M y_a\,,
\quad \Pi(y,z)=\prod_{a=1}^M\prod_{b=1}^N(y_a-z_b)\,,
\end{align*}
and similarly for $\Delta(z),\bar{z}$. We shall use also
$[n]=(q^n-q^{-n})/(q-q^{-1})$.  

For $c\in\Z$, define rational functions $I^c_{M,N}(y,z)$ by 
\begin{align}
I^c_{M,N}(y,z)&= 
\frac{1}{\Delta(y)}
\prod_{a=1}^M\Bigl(\frac{q^cT_{q,y_a}-q^{-c}T^{-1}_{q,y_a}}{q-q^{-1}}\Bigr)
\Bigl(\frac{\Delta(y)}{\Pi(y,z)}
\Bigr)\,,
\label{Ic}
\end{align}
where 
$(T_{q,y_a}f)(y_1,\ldots,y_M)=f(y_1,\ldots,q y_a,\ldots,y_M).$ 

The functions $I^c_{M,N}(y,z)$ are clearly symmetric in $y_1,\dots,y_M$ and in $z_1,\dots,z_N$.

\begin{lem}\label{lem:Ic-prop}
Functions $I^c_{M,N}(y,z)$ enjoy the following properties: 
\begin{align}
&I^c_{M,N}(y,z)=
(-1)^{MN+N}\prod_{i=0}^{M-N-1}[c+i]\cdot I^{1-c}_{N,M}(z,y)
\quad \text{if $M\ge N$}\,,
\label{pr1}
\\
&I^c_{M,N}(y,z)=0\quad\text{if $M>N$ and 
$N-M+1\le c\le 0$},
\label{pr2}
\\
&\bar{y} \cdot I^{1}_{N,N}(y,z)=\bar{z}\cdot I^{0}_{N,N}(y,z)\,.
\label{pr3}
\end{align}
\end{lem}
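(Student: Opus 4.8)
The plan is to expand the difference operator $\prod_{a=1}^M(q^cT_{q,y_a}-q^{-c}T_{q,y_a}^{-1})/(q-q^{-1})$ acting on $\Delta(y)/\Pi(y,z)$ and track how each factor transforms. The key observation is that $T_{q,y_a}$ acts on $\Delta(y)$ by multiplying certain Vandermonde factors by powers of $q$, and on $\Pi(y,z)$ by replacing $y_a-z_b$ with $qy_a-z_b$; the strategy is to reorganize these into manageable products before specializing.

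\begin{proof}
We prove the three identities in turn.

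\medskip
\emph{Proof of \eqref{pr1}.}
The idea is to compare the two expressions by analyzing them as rational functions with prescribed poles and symmetry. Both sides are symmetric in $y$ and in $z$, rational, with only simple poles along $y_a=z_b$ coming from $1/\Pi(y,z)$. First I would establish that $I^c_{M,N}(y,z)$, written out, is a Laurent polynomial times $1/\Pi(y,z)$ of controlled bidegree. I would then identify the residue of $I^c_{M,N}(y,z)$ along the hyperplane $y_a=z_b$ and show it matches, up to the stated constant $(-1)^{MN+N}\prod_{i=0}^{M-N-1}[c+i]$, the residue of $I^{1-c}_{N,M}(z,y)$. Since a rational function with only simple poles along $y_a=z_b$ is determined by its residues there together with its behavior at infinity, matching residues plus a degree/asymptotic comparison as $y_a\to\infty$ pins down the constant. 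The combinatorial factor $\prod_{i=0}^{M-N-1}[c+i]$ should emerge from the ``excess'' $M-N$ variables $y$ that have no matching $z$: collapsing those contributes exactly the product of $q$-integers $[c+i]$ through the telescoping action of the shift operators.

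\medskip
\emph{Proof of \eqref{pr2}.}
When $M>N$ and $N-M+1\le c\le 0$, one of the integers $c,c+1,\dots,c+(M-N-1)$ is zero, so the factor $\prod_{i=0}^{M-N-1}[c+i]$ in \eqref{pr1} vanishes because $[0]=0$. Thus \eqref{pr2} is an immediate corollary of \eqref{pr1}, and I would simply cite the range of $c$ and the vanishing of the relevant $q$-integer.

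\medskip
\emph{Proof of \eqref{pr3}.}
Here $M=N$, so the operator is $\prod_{a=1}^N(q T_{q,y_a}-q^{-1}T_{q,y_a}^{-1})/(q-q^{-1})$ for $I^1$ and $\prod_{a=1}^N(T_{q,y_a}-T_{q,y_a}^{-1})/(q-q^{-1})$ for $I^0$, each applied to $\Delta(y)/\Pi(y,z)$. The plan is to multiply $I^1_{N,N}$ by $\bar y$ and $I^0_{N,N}$ by $\bar z$ and show the two coincide by comparing the expansions term by term, or more cleanly by relating the operators $\bar y\,(qT-q^{-1}T^{-1})$ and $(T-T^{-1})\,\bar z$ through the commutation of multiplication by $y_a$ past $T_{q,y_a}$ (namely $y_a T_{q,y_a}=q^{-1}T_{q,y_a}\,y_a$). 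I expect the identity to follow from a symmetric-function manipulation exploiting that the degree-$N$ top and bottom pieces in $y$ and $z$ are interchanged under the substitution.

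\medskip
The main obstacle will be bookkeeping in \eqref{pr1}: correctly computing the action of the product of shift operators on $\Delta(y)/\Pi(y,z)$ and extracting the precise sign $(-1)^{MN+N}$ together with the $q$-integer product. I would handle this by induction on $M-N$, peeling off one excess $y$-variable at a time, where each step produces one factor $[c+i]$ and a controlled sign, reducing the claim to the balanced case $M=N$, which then connects directly to the symmetry already visible in \eqref{pr3}.
\end{proof}
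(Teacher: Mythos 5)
Your proposal contains a genuine gap: for \eqref{pr1} and \eqref{pr3} you have written a strategy outline, not a proof, and the strategy for \eqref{pr1} starts from a false premise. You assert that $I^c_{M,N}(y,z)$ has ``only simple poles along $y_a=z_b$ coming from $1/\Pi(y,z)$.'' This is incorrect: the shift operators $T^{\pm1}_{q,y_a}$ act on the denominator $\Pi(y,z)$ as well, so after expanding the operator product the poles of $I^c_{M,N}(y,z)$ lie along $qy_a=z_b$ and $q^{-1}y_a=z_b$, not along $y_a=z_b$ --- this is exactly why the paper's polynomial $\tilde{I}^c_{M,N}$ in \eqref{Itilde} is obtained by multiplying by $\prod_{a,b}(qy_a-z_b)(q^{-1}y_a-z_b)$. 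So the residue-matching argument as you set it up would fail at the first step, and even with the correct pole locations you never carry out the residue computation, the asymptotic comparison, or the extraction of the sign $(-1)^{MN+N}$ and the factor $\prod_{i=0}^{M-N-1}[c+i]$; these are all deferred with ``I would'' and ``should emerge.'' Similarly, your commutation identity $y_aT_{q,y_a}=q^{-1}T_{q,y_a}y_a$ for \eqref{pr3} is correct and does conjugate the $c=1$ operator into the $c=0$ operator, but it only reduces \eqref{pr3} to the vanishing of $\prod_a\bigl(T_{q,y_a}-T^{-1}_{q,y_a}\bigr)$ applied to $(\bar{y}-\bar{z})\Delta(y)/\Pi(y,z)$, which is a nontrivial statement you leave unproved. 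Only your deduction of \eqref{pr2} from \eqref{pr1} (vanishing of a $q$-integer $[0]=0$) is complete, and it coincides with the paper's.

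The missing idea is the paper's computational engine: expand $\Delta(y)/\Pi(y,z)$ by the Cauchy identity as $(-1)^{MN}\bar{z}^{-M}\sum_\lambda \Delta(y)s_\lambda(y)s_\lambda(z^{-1})$, summed over partitions of length at most $\min(M,N)$. The determinant form of $\Delta(y)s_\lambda(y)$ shows that the operator $\prod_{a=1}^M\bigl(q^cT_{q,y_a}-q^{-c}T^{-1}_{q,y_a}\bigr)/(q-q^{-1})$ acts \emph{diagonally} on each term with eigenvalue $\prod_{a=1}^M[\lambda_a+M-a+c]$. Comparing the resulting expansions of $I^c_{M,N}(y,z)$ and $I^{1-c}_{N,M}(z,y)$ yields \eqref{pr1} at once, with your ``excess variable'' factors $[c+i]$ appearing as the eigenvalue contributions of the tail entries $\lambda_a=0$, $N<a\le M$; and \eqref{pr3} follows by setting $M=N$, $c=0$, observing that only partitions with $\lambda_N>0$ survive (since $[\lambda_N]=0$ otherwise), and shifting $\lambda\mapsto\lambda+(1,\ldots,1)$ via $s_{\lambda+(1,\ldots,1)}(y)=\bar{y}s_\lambda(y)$. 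If you want to salvage your induction-on-$(M-N)$ plan, you would still need precisely this diagonalization (or an equivalent identity) to control each peeling step, so the Schur expansion is not avoidable bookkeeping --- it is the proof.
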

\begin{proof}
Let $s_\lambda(y)$ denote the Schur polynomial
associated with a partition $\lambda$:
\begin{align*}
\Delta(y)s_\lambda(y) 
=\det\Bigl(y_a^{\lambda_b+M-b}\Bigr)_{1\le a,b\le M}\,.
\end{align*}
Here $\lambda=(\lambda_1,\ldots,\lambda_\ell,0,0,\ldots)$, 
$\lambda_1\ge\cdots\ge\lambda_\ell>0$, $\ell=\ell(\lambda)$.

Using the Cauchy identity we can write
\begin{align*}
\frac{\Delta(y)}{\Pi(y,z)}
&=(-1)^{MN}\bar{z}^{-M}
\frac{\Delta(y)}{\prod_{1\le a\le M\atop 1\le b\le N}(1-y_az_b^{-1})} 
\\
&=(-1)^{MN}\bar{z}^{-M}\sum_{\lambda}\Delta(y)s_\lambda(y)
s_\lambda(z^{-1})\,,
\end{align*}
where 
$z^{-1}=(z_1^{-1},\ldots,z_N^{-1})$,
and 
the sum is over all partitions 
$\lambda$ with length $\ell(\lambda)\le\min(M,N)$.
The determinant expression of $\Delta(y)s_\lambda(y)$ readily yields
\begin{align*}
\prod_{a=1}^M\Bigl(\frac{q^cT_{q,y_a}-q^{-c}T^{-1}_{q,y_a}}{q-q^{-1}}
\Bigr)
\Bigl(\Delta(y)s_\lambda(y) \Bigr)
=\prod_{a=1}^M[\lambda_a+M-a+c]\cdot \Delta(y)s_\lambda(y)\,,
\end{align*}
from which we obtain 
\begin{align}
I^c_{M,N}(y,z)
&= (-1)^{MN}\bar{z}^{-M}\sum_{\lambda}\prod_{a=1}^M
[\lambda_a+M-a+c]\,s_\lambda(y)s_{\lambda}(z^{-1})\,.
\label{Schur1}
\end{align}
By a similar calculation we have also
\begin{align}
I^c_{N,M}(z,y)
&= 
(-1)^N\bar{z}^{-M}\sum_{\lambda}\prod_{a=1}^N
[\lambda_a+M-a+1-c]\,s_\lambda(z^{-1})s_{\lambda}(y)\,.
\label{Schur2}
\end{align}
Comparing \eqref{Schur1} and \eqref{Schur2} we obtain \eqref{pr1}.

Property \eqref{pr2} is an immediate consequence of \eqref{pr1}. 

To see \eqref{pr3}, choose $M=N$ and $c=0$ in \eqref{Schur1}. 
Then only partitions with $\lambda_N>0$ contribute to the sum.
Rewriting $\lambda_a$ as $\lambda_a+1$ for $1\le a \le N$ and using
$s_{\lambda+(1,\ldots,1)}(y)=\bar{y}s_\lambda(y)$, we obtain 
\eqref{pr3}.
\end{proof}

\begin{lem}\label{lem:asy-Ic}
For $0\le k\le M$ and $0\le l\le N$, consider the substitution
\begin{align*}
&(y,z)=(\xi y',y'',\xi z',z'')\,,
\end{align*}
where
\begin{align*}
&y'=(y_1,\ldots,y_k)\,,\quad y''=(y_{k+1},\ldots,y_M)\,,\\
&z'=(z_1,\ldots,z_l)\,,\quad z''=(z_{l+1},\ldots,z_N)\,.
\end{align*}
Then the following asymptotics hold:
\begin{align*}
&I^c_{M,N}(\xi y',y'',\xi z',z'')
\\
&=\frac{(-1)^{(M-k)l}\xi^{-kN-(M-k)l}}{(\bar{y'})^{N-l}(\bar{z'})^{M-k}}
\Bigl(I^{c+M-k-N+l}_{k,l}(y',z')I^c_{M-k,N-l}(y'',z'')+O(\xi^{-1})
\Bigr)
\quad \text{as $\xi\to\infty$}\,,
\\ 
&
=\frac{(-1)^{k(N-l)}\xi^{-kl}}{(\bar{y''})^{l}(\bar{z''})^{k}}
\Bigl(I^{c}_{k,l}(y',z')I^{c+k-l}_{M-k,N-l}(y'',z'')+O(\xi)
\Bigr)
\quad
\text{as $\xi\to 0$}\,.
\end{align*}
\end{lem}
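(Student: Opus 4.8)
The plan is to reduce the whole statement to two structural facts: that the $q$-difference operators defining $I^c_{M,N}$ commute with the rescaling $\xi$, and that the leading asymptotics of the ``seed'' $\Delta(y)/\Pi(y,z)$ factorize. Write $\mathcal{D}^c_{y_a}=(q^cT_{q,y_a}-q^{-c}T^{-1}_{q,y_a})/(q-q^{-1})$, so that by \eqref{Ic} we have $\Delta(y)\,I^c_{M,N}(y,z)=\prod_{a=1}^M\mathcal{D}^c_{y_a}\bigl(\Delta(y)/\Pi(y,z)\bigr)$. Since $T_{q,y_a}$ multiplies $y_a$ by $q$ and leaves $\xi$ untouched, the substitution $y_a\mapsto\xi y_a$ ($a\le k$), $z_b\mapsto\xi z_b$ ($b\le l$) commutes with every $\mathcal{D}^c_{y_a}$; hence the product of operators acts on the specialized function variable by variable, those with $a\le k$ acting on the $y'$-variables and those with $a>k$ on the $y''$-variables. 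I would also record the elementary absorption identity $\mathcal{D}^c_{y_a}\bigl((\bar{y})^p f\bigr)=(\bar{y})^p\,\mathcal{D}^{c+p}_{y_a}f$, which holds because $T_{q,y_a}(\bar{y})^p=q^p(\bar{y})^p$; iterating it over the $k$ (resp.\ $M-k$) relevant variables turns a scalar prefactor $(\bar{y'})^p$ (resp.\ $(\bar{y''})^p$) into a shift $c\mapsto c+p$ of the index of the smaller $I$-function.

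Next I would compute the leading terms of $\Delta$ and $\Pi$ after the substitution by splitting each product into blocks according to which of $y',y'',z',z''$ the arguments lie in. A factor $(\xi u-v)$ tends to $\xi u$ as $\xi\to\infty$ and to $-v$ as $\xi\to0$, so collecting these contributions gives, up to $1+O(\xi^{\mp1})$, an explicit power of $\xi$, a monomial in $\bar{y'},\bar{z'}$ (for $\xi\to\infty$) or in $\bar{y''},\bar{z''}$ (for $\xi\to0$), and a sign. Taking the ratio of these leading terms shows that $\Delta(y)/\Pi(y,z)$ specializes, to leading order, to (a power of $\xi$)$\times$(a sign)$\times$(a monomial)$\times\bigl(\Delta(y')/\Pi(y',z')\bigr)\bigl(\Delta(y'')/\Pi(y'',z'')\bigr)$, i.e.\ it factorizes into a $(y',z')$-seed and a $(y'',z'')$-seed.

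Finally I would apply the operators. Because they are $\xi$-independent they act termwise on the asymptotic expansion and preserve the leading $\xi$-power, so it suffices to apply them to the factorized leading term; the $y'$-group operators act on the $(y',z')$-seed and the $y''$-group operators on the $(y'',z'')$-seed, while the absorption identity converts the relevant power of $\bar{y'}$ (resp.\ $\bar{y''}$) into the advertised shift of $c$ — namely $c\mapsto c+M-k-N+l$ in the $y'$-group as $\xi\to\infty$, and $c\mapsto c+k-l$ in the $y''$-group as $\xi\to0$. Dividing by the leading term of $\Delta(y)$ after specialization and recognizing each group as $\Delta\cdot I$ reproduces the two claimed formulas, with all powers of $\xi$, the $\bar{y'},\bar{z'}$ (resp.\ $\bar{y''},\bar{z''}$) prefactors, and the signs matching after cancellation. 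The conceptual content is entirely in the two commutation identities of the first paragraph; the one genuinely error-prone step — and the place to be careful — is the bookkeeping of exponents and signs in this final cancellation.
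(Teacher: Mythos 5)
Your proposal is correct and follows essentially the same route as the paper: the paper's proof is exactly this ``direct calculation,'' recording the leading asymptotics of $\Delta(\xi y',y'')$ and $\Pi(\xi y',y'',\xi z',z'')$ in the two regimes and leaving the rest implicit. Your two structural observations --- that the operators $\mathcal{D}^c_{y_a}$ commute with the $\xi$-rescaling, and the absorption identity $\mathcal{D}^c_{y_a}\bigl((\bar y)^p f\bigr)=(\bar y)^p\,\mathcal{D}^{c+p}_{y_a}f$ producing the shifts $c\mapsto c+M-k-N+l$ and $c\mapsto c+k-l$ --- are precisely the steps the paper's terse proof takes for granted, and your bookkeeping of the $\xi$-powers, signs, and monomial prefactors matches the stated formulas.
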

\begin{proof}
This follows from a direct calculation, noting
\begin{align*}
&\Delta(\xi y',y'')
=\xi^{k(k-1)/2} \Delta(y')\Delta(y'')
\times
\begin{cases}
\xi^{k(M-k)}\Bigl(\bigl(\bar{y}'\bigr)^{M-k}+O(\xi^{-1}) \Bigr)
& \text{as $\xi\to\infty$},
\\
(-1)^{k(M-k)}\bigl(\bar{y}''\bigr)^k+O(\xi)
& \text{as $\xi\to 0$},
\end{cases}
\\
&\Pi(\xi y',y'', \xi z',z'')
=\xi^{kl}\Pi(y',z')\Pi(y'',z'')
\\
&\times
\begin{cases}
\xi^{k(N-l)+(M-k)l}
\Bigl((-1)^{(M-k)l}
\bigl(\bar{y}'\bigr)^{N-l}\bigl(\bar{z}'\bigr)^{M-k}
+O(\xi^{-1})
\Bigr)
 & \text{as $\xi\to\infty$},
\\
(-1)^{k(N-l)}
\bigl(\bar{y}''\bigr)^l\bigl(\bar{z}''\bigr)^k
+O(\xi)
& \text{as $\xi\to 0$}.
\end{cases}
\end{align*} 
\end{proof}

The following is a corollary of Lemma \ref{lem:asy-Ic}.
\begin{lem}\label{lem:Ic-nonzero}
Let $M=N$ and $c>0$. 
Then the limit
\begin{align*}
\lim_{\xi\to\infty} \xi^{N(k+l)-kl}I^c_{N,N}(\xi y',y'',\xi z',z'')
\end{align*} 
exists. This limit is non-zero only if $c> k-l$.
Similarly the limit
\begin{align*}
\lim_{\xi\to 0} \xi^{kl}I^c_{N,N}(\xi y',y'',\xi z',z'')
\end{align*} 
exists. This limit is non-zero only if $c> -k+l$.
\end{lem}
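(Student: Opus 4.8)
The plan is to read both limits directly off the asymptotic expansions of Lemma \ref{lem:asy-Ic} specialized to $M=N$, and then to locate the vanishing of the resulting leading coefficients by means of property \eqref{pr2}. First I would set $M=N$ in the $\xi\to\infty$ expansion. The power of $\xi$ occurring there is $\xi^{-kN-(N-k)l}$, and a one-line check gives $N(k+l)-kl-kN-(N-k)l=0$. Hence multiplying by $\xi^{N(k+l)-kl}$ cancels the $\xi$-dependence of the leading term, the $O(\xi^{-1})$ correction drops out in the limit, and I obtain
\[
\lim_{\xi\to\infty}\xi^{N(k+l)-kl}I^c_{N,N}(\xi y',y'',\xi z',z'')
=\frac{(-1)^{(N-k)l}}{(\bar{y'})^{N-l}(\bar{z'})^{N-k}}\,I^{c-k+l}_{k,l}(y',z')\,I^c_{N-k,N-l}(y'',z'')\,.
\]
In particular the limit exists. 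The monomial prefactor is manifestly nonzero, so the limit vanishes as soon as one of the two residual $I$-factors is identically zero.

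Next I would prove the stated claim in contrapositive form. Assume $c\le k-l$; since $c>0$ this forces $k>l$, so the factor $I^{c-k+l}_{k,l}(y',z')$ has more $y'$- than $z'$-variables. Its superscript $c'=c-k+l$ satisfies $c'\le 0$ (because $c\le k-l$) and $c'\ge l-k+1$ (equivalent to $c\ge1$), i.e.\ $l-k+1\le c'\le 0$ together with $k>l$. This is precisely the vanishing range of \eqref{pr2}, whence $I^{c-k+l}_{k,l}\equiv 0$ and the whole limit is zero. Therefore the limit can be nonzero only when $c>k-l$, as asserted.

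The $\xi\to0$ case is entirely parallel. With $M=N$ the expansion of Lemma \ref{lem:asy-Ic} carries the factor $\xi^{-kl}$, so multiplying by $\xi^{kl}$ gives
\[
\lim_{\xi\to0}\xi^{kl}I^c_{N,N}(\xi y',y'',\xi z',z'')
=\frac{(-1)^{k(N-l)}}{(\bar{y''})^{l}(\bar{z''})^{k}}\,I^c_{k,l}(y',z')\,I^{c+k-l}_{N-k,N-l}(y'',z'')\,,
\]
and again the limit exists. Here I would apply \eqref{pr2} to the second factor: if $c\le -k+l$ then $c>0$ forces $l>k$, so $I^{c+k-l}_{N-k,N-l}$ has more $y''$- than $z''$-variables, and its superscript $c+k-l$ lies in the window $k-l+1\le c+k-l\le 0$, so this factor vanishes by \eqref{pr2}. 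Hence the $\xi\to0$ limit is nonzero only if $c>-k+l$.

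There is no genuinely hard step: the whole argument is bookkeeping of exponents, confirming that the normalizing powers $\xi^{N(k+l)-kl}$ and $\xi^{kl}$ are exactly those that make the leading terms of Lemma \ref{lem:asy-Ic} survive, together with a careful matching of the shifted superscripts $c-k+l$ and $c+k-l$ against the vanishing window $N-M+1\le c\le 0$ of \eqref{pr2}. The only point demanding minor care is keeping track of which of the two indices is larger in each residual $I$-factor, so that \eqref{pr2} is invoked in the correct regime $M>N$. (One could moreover upgrade ``only if'' to ``if and only if'' by noting from the Schur expansion \eqref{Schur1} that $I^c_{M,N}\not\equiv0$ whenever $c>0$, but this is not needed for the statement.)
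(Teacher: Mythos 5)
Your proof is correct and takes essentially the same route as the paper's: the paper also obtains the two limits from the factorized leading terms of Lemma \ref{lem:asy-Ic} and then applies the vanishing property \eqref{pr2} to the factor $I^{c-k+l}_{k,l}(y',z')$ (for $\xi\to\infty$) and to $I^{c+k-l}_{N-k,N-l}(y'',z'')$ (for $\xi\to 0$). You merely spell out the exponent bookkeeping and the translation of the superscript windows into the range $N-M+1\le c\le 0$ of \eqref{pr2}, which the paper leaves implicit.
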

\begin{proof}
By \eqref{pr2}, $I^{c-k+l}_{k,l}(y',z')=0$ if $k>l$ and 
$1\le c\le k-l$. 
Similarly  $I^{c+k-l}_{N-k,N-l}(y'',z'')=0$ if $k<l$ and 
$1\le c\le l-k$.
\end{proof}
\medskip

Clearing the denominator we define 
\begin{align}
&\tilde{I}^c_{M,N} (y,z)=
(q-q^{-1})^M
\prod_{1\le a\le M \atop 1\le b\le N}(q y_a-z_b)(q^{-1} y_a-z_b)\cdot 
I^c_{M,N}(y,z)\,.
\label{Itilde}
\end{align}
Then $\tilde{I}^c_{M,N} (y,z)$ 
is a polynomial in $y,z$, symmetric in $y$, symmetric in $z$,
and of total degree $MN$. 
It is also a Laurent polynomial in $q^c$:
\begin{align}
&\tilde{I}^c_{M,N}(y,z)=
\sum_{l=0}^M(-1)^l q^{(M-2l)c}\tilde{I}_{M,N,l}(y,z)\,,
\label{Itilde2}\\
&\tilde{I}_{M,N,l}(y,z)=
q^{(M-l)(M-l-1)/2-l(l-1)/2}
\sum_{|I|=l}\prod_{a\in I\atop b\not\in I}
\frac{q^{-1}y_a-qy_b}{y_a-y_b}
\prod_{a\not\in I \atop 1\le b\le N}(q^{-1}y_a-z_b) 
\prod_{a\in I\atop 1\le b\le N}(q y_a-z_b)\,,
\nn
\end{align}
where the summation is over subsets $I$ of $\{1,\dots,M\}$.

In particular we have
\begin{align}
&\tilde{I}^c_{N,N}(y,q^{\pm 1}y) =q^{\pm N(N-1)/2}q^{\pm Nc}
\prod_{1\le a,b\le N}(q^{-1}y_a-q y_b)\,.
\label{Icq}
\end{align}

\begin{lem}\label{lem:Iwheel}
Let $N\ge 2$, $c\in\Z$. Then
\begin{align*}
&I^c_{N,N}(y,z)=0\quad \text{if $y_1=q^{-1}z_1, y_2=qz_1$},\\ 
&I^c_{N,N}(y,z)=0\quad \text{if $z_1=q^{-1}y_1, z_2=qy_1$}.
\end{align*} 
\end{lem}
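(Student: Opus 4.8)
The plan is to reduce the two displayed vanishing statements to a single one, and then to verify the surviving statement by a direct term-by-term inspection of the explicit expansion \eqref{Itilde2} of the numerator.

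First I would observe that the two assertions are equivalent. Taking $M=N$ in \eqref{pr1}, the product $\prod_{i=0}^{M-N-1}[c+i]$ is empty and the sign $(-1)^{N^2+N}$ equals $1$ (as $N(N+1)$ is even), so that $I^c_{N,N}(y,z)=I^{1-c}_{N,N}(z,y)$. Under this identity the second vanishing, at $z_1=q^{-1}y_1,\ z_2=qy_1$, is precisely the first vanishing applied to the function $I^{1-c}_{N,N}(z,y)$, in which $z$ plays the role of the first group of variables and $y$ the second. Since the first statement will be proved for every $c\in\Z$, it holds in particular with $c$ replaced by $1-c\in\Z$, and the second statement follows. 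Hence it suffices to treat the case $y_1=q^{-1}z_1,\ y_2=qz_1$.

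Next I would pass to the numerator: by \eqref{Itilde} the function $\tilde{I}^c_{N,N}$ is the numerator of $I^c_{N,N}$ up to the nonzero constant $(q-q^{-1})^N$ and the pole factors $(q^{\pm1}y_a-z_b)$, so the content of the wheel condition is the vanishing of $\tilde{I}^c_{N,N}(y,z)$ at the specialization. I would then feed the specialization, which reads $qy_1=z_1=q^{-1}y_2$, into the explicit formula \eqref{Itilde2}, where $\tilde{I}^c_{N,N}$ is a finite combination, with the nonzero scalar coefficients $(-1)^lq^{(N-2l)c}q^{(N-l)(N-l-1)/2-l(l-1)/2}$, of the sums $\sum_{|I|=l}\prod_{a\in I,\,b\notin I}\frac{q^{-1}y_a-qy_b}{y_a-y_b}\prod_{a\notin I,\,b}(q^{-1}y_a-z_b)\prod_{a\in I,\,b}(qy_a-z_b)$ taken over subsets $I\subseteq\{1,\dots,N\}$. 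The claim is that at the specialization every single subset $I$ contributes a term carrying a vanishing factor. I would split the subsets into three exhaustive cases. If $1\in I$, then the factor $(qy_1-z_1)$ appearing in $\prod_{a\in I,\,b}(qy_a-z_b)$ (with $a=1$, $b=1$) vanishes. If $2\notin I$, then the factor $(q^{-1}y_2-z_1)$ appearing in $\prod_{a\notin I,\,b}(q^{-1}y_a-z_b)$ (with $a=2$, $b=1$) vanishes. In the remaining case $1\notin I$ and $2\in I$, the factor $\frac{q^{-1}y_2-qy_1}{y_2-y_1}$ appearing in $\prod_{a\in I,\,b\notin I}$ (with $a=2$, $b=1$) has numerator $q^{-1}y_2-qy_1=z_1-z_1=0$, while its denominator $y_2-y_1=(q-q^{-1})z_1$ is nonzero; hence this factor vanishes. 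As every subset falls into exactly one of these three cases, each term of \eqref{Itilde2} vanishes at the specialization, so $\tilde{I}^c_{N,N}(y,z)=0$ there, which is the desired conclusion.

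The hypothesis $N\ge2$ is exactly what drives the argument, since it guarantees that both indices $1$ and $2$ occur and thus that the decisive factor $q^{-1}y_2-qy_1$ is available in the third case. No hard computation is involved; I expect the only substance of the proof to be the bookkeeping that the three cases are exhaustive and that in the last case the denominator $y_2-y_1$ does not also vanish. The latter is where the standing genericity assumption $q_2=q^2\ne1$ (equivalently $q\ne q^{-1}$) is used; it is the single point that must be checked with care.
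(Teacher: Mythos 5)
Your proof is correct and takes essentially the same route as the paper: the paper's one-line proof ("the first relation follows from \eqref{Itilde2}; applying the duality \eqref{pr1} we obtain the second") is exactly your argument, with your three-case inspection of the subsets $I$ being the detail that "follows from \eqref{Itilde2}" abbreviates, and your reduction of the second vanishing to the first via $I^c_{N,N}(y,z)=I^{1-c}_{N,N}(z,y)$ being the paper's duality step. The only cosmetic slip is that your three cases overlap rather than partition the subsets (e.g.\ $1\in I$ and $2\notin I$ falls under the first two cases simultaneously), but since every subset falls into at least one case the argument is unaffected.
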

\begin{proof}
The first relation follows from \eqref{Itilde2}. 
Applying the duality \eqref{pr1} we obtain the second. 
\end{proof}

\subsection{Elements $G^{m|n}_{r,N}(s)$}\label{sec:GrN}

In this subsection we introduce rational functions $G^{m|n}_{r,N}(s)$,
$G^{*,m|n}_{r,N}(s)$ and show that they belong to 
the subalgebra $\cB_{m|n}(s)$. Moreover, we will show that $G^{m|n}_{r,N}(s)$,
$G^{*,m|n}_{r,N}(s)$ generate the Bethe algebra
$\cB_{m|n}(s)$, see Theorem \ref{prop:surj_glmn} below.

Conjecturally, generating series of the form 
$\sum_{N=0}^\infty  \gamma_NG^{m|n}_{r,N}(s)u^{-N}$, 
$\sum_{N=0}^\infty  \gamma_N^*G^{*,m|n}_{r,N}(s)u^{-N}$  
with some constants $\gamma_N$, $\gamma^*_N$ correspond to the trace of the universal $R$-matrix of quantum affine algebra $U_q\gl_{m|n}$ computed on the evaluation module with evaluation parameter $u$ obtained from the $r$-th skew-symmetric power of the vector and covector representations, with weights determined by $s$.

We retain the notation in the previous sections, 
assuming $m\ge 1$, $n\ge0$, $m\neq n$.
In particular, we fix 
$s_1,\ldots,s_{m+n}\in (\C^{\times})^{m+n}$ such that  
$\prod_{i=1}^{m+n}s_i=1$.
We shall write
\begin{align*}
&\bfs_i=\prod_{1\le j\le i-1}s_j\,,\quad \bfs_1=1\,. 
\end{align*}
Fixing $N$ we set $\bar{x}_i=\prod_{a=1}^N x_{i,a}$ and
\begin{align*}
&t_i(s)=\bfs_i \frac{\bar{x}_{i-1}}{\bar{x}_i\,,}
\quad
J^c_i(s)=
\bigl(-\bfs_i d^{N}\bigr)^{c}\, \tilde{I}^{c+1}_{N,N}(d x_{i-1},x_i)\,.
\end{align*}
Set further 
\begin{align}\label{Pmn}
P^{m|n}_N
&=
\begin{cases}
\ds{\frac{\prod_{i=1}^{m-1}\prod_{1\le a, b\le N}(x_{i,a}-q_2x_{i,b})
} 
{\prod_{i=1}^{m+n}\prod_{1\le a,b\le N}(x_{i,a}-x_{i+1,b})
}\cdot \bar{x}_m\Delta(x_m)\Delta(x_{m+n})}\,
&\text{if $m,n\ge 1$},
\\
\ds{\prod_{i=1}^{m}\prod_{1\le a, b\le N}\frac{x_{i,a}-q_2x_{i,b}} 
{x_{i,a}-x_{i+1,b}}}
&\text{if $m\ge2$, $n=0$},\\
\ds{
\prod_{1\le a<b\le N}
\frac{(x_{1,a}-q_2x_{1,b})(x_{1,a}-q_2^{-1}x_{1,b})}    
{(x_{1,a}-x_{1,b})^2}}
&\text{if $m=1$, $n=0$}.\\
\end{cases}
\end{align}
\medskip

\begin{dfn}\label{dfn:GrN}
For $r\ge 0$, we define
\begin{align}
G^{m|n}_{r,N}(s)
&=P^{m|n}_N
\sum_{1\le i_1<\cdots<i_p\le m \atop p+c_{m+1}+\cdots+c_{m+n}=r
} t_{i_1}(s)\cdots t_{i_p}(s) \prod_{i=m+1}^{m+n} J_i^{c_i}(s)\,,
\label{dfnG}
\end{align}
The sum is taken over all $i_1,\ldots,i_p$, 
$p\ge0$, and $c_{m+1},\ldots,c_{m+n}\ge 0$, 
subject to the indicated condition.
\qed
\end{dfn}
Terms of $G^{m|n}_{r,N}(s)$ are in one-to-one correspondence with
semi-standard Young tableaux of a column of length $r$ in the alphabet
$\{1,2,\ldots,m+n\}$. Letters are arranged in non-decreasing order, 
where each $i\in\{1,\ldots,m\}$ can appear at most once while 
each $i\in\{m+1,\ldots,m+n\}$ is allowed to repeat $c_i$ times. 
In other words, the terms of $G^{m|n}_{r,N}(s)$ are in one-to-one correspondence with a basis of the $r$-th skew-symmetric power of the vector representation $\C^{m|n}$ of $\gl_{m|n}$. 

Clearly, $G^{m|n}_{0,N}=P^{m|n}_N
\prod_{i=m+1}^{m+n} J_i^{0}(s)$.

When $n=0$, we write $P^m_N=P^{m|0}_N$, $G^m_{r,N}(s)=G^{m|0}_N(s)$. 

\medskip

\begin{example}
If $n=0$, then $G^{m}_{r,N}(s)/P^m_N$ is the 
$r$-th elementary symmetric polynomial
in the variables $t_1(s),\ldots,t_m(s)$.
Hence  $G^{m}_{m,N}(s)=G^m_{0,N}(s)\prod_{i=1}^m\bfs_i$ 
and $G^m_{r,N}(s)=0$ for $r>m$.
\qed
\end{example}

\begin{example}
Let $m=3,n=2$ and $r=2$. Suppressing $s$ we have
\begin{align*}
G^{3|2}_{2,N}/P^{3|2}_N&=t_1t_2J^0_4J^0_5
+t_1t_3 J^0_4J^0_5
+t_2t_3 J^0_4J^0_5
\\
&+t_1J^1_4J^0_5
+t_2J^1_4J^0_5
+t_3J^1_4J^0_5
+t_1J^0_4J^1_5
+t_2J^0_4J^1_5
+t_3J^0_4J^1_5
\\
&+
J^2_4J^0_5
+J^1_4J^1_5
+
J^0_4J^2_5\,.
\end{align*} 
 \qed
\end{example}

\begin{example}
Let $m=2, n=1$ and $N=1$. 
Denote $x_{1,1}=x, x_{2,1}=y,x_{3,1}=z$. 
Then $x$ is bosonic while $y,z$ are fermionic. 
For $N=1$ we have no wheel conditions.

Then one easily checks that $B_{2|1,1}(s)$ 
is three dimensional and consists of elements
$$
\frac{A\, xyz+ B(s_1s_2d^2\,x^2y+s_2d^2\,y^2z+z^2x)  
+C(s_1s_2d^2\,y^2x  +s_1\, x^2z +z^2y )}{(x-y)(y-z)(x-z)},
$$
where $A,B,C$ are arbitrary constants.
We have $\bfs_1=1$, $\bfs_2=s_1,\bfs_3=s_1s_2$, so
$t_1(s)=z/x$, $t_2(s)=s_1x/y$, $t_3(s)=s_1s_2 y/z$.

We have a basis given by 
\begin{align*}
&G^{2|1}_{0,1}= \frac{q^{-1}(1-q_2)^2 xyz}{(x-y)(y-z)(z-x)}
\,,\\
&G^{2|1}_{1,1}= \frac{q^{-1}(1-q_2)^2}{(x-y)(y-z)(z-x)}
\Bigl(s_1s_2d^2 y^2x+s_1x^2z+z^2y-(q_1+q_3^{-1})s_1s_2 xyz\Bigr)\,,
 \\
&G^{2|1}_{2,1}=\frac{1-q_2}{(x-y)(y-z)(z-x)}\Bigl(
-(q-q^{-1})s_1\bigl(s_1s_2d^2x^2y+s_2d^2y^2z+z^2x\bigr)
\\
&+(q^2-q^{-2})s_1s_2d\bigl(s_1s_2d^2y^2x+s_1x^2z+z^2y\bigr)
-(q^3-q^{-3})s_1^2s_2^2d^2 xyz\Bigr)\,. 
\end{align*}
\qed
\end{example}
\medskip

In view of the isomorphism \eqref{cyclmn}, 
we introduce also the following elements for $n\ge1$:
\begin{align}
G^{*,m|n}_{r,N}(s)
&=P^{*,m|n}_N
\sum_{m+1\le i_1<\cdots<i_p\le m+n \atop p+c_{m+1}+\cdots+c_{m+n}=r
} t_{i_1}(s)\cdots t_{i_p}(s) \prod_{i=1}^{m} J_i^{*,c_i}(s)\,,
\label{dfnG*}
\end{align}
where
\begin{align*}
&P^{* m|n}_N=
\frac{\prod_{i=m+1}^{m+n-1}\prod_{1\le a, b\le N}(x_{i,a}-q_2^{-1}x_{i,b})
} 
{\prod_{i=1}^{m+n}\prod_{1\le a,b\le N}(x_{i,a}-x_{i+1,b})
}\cdot \bar{x}_{m+n}\Delta(x_m)\Delta(x_{m+n})\,,
\\
&J^{* c}_i(s)=
\bigl(-\bfs_i d^{-N}\bigr)^{c}\, \tilde{I}^{c+1}_{N,N}(d^{-1} x_{i-1},x_i)\,.
\end{align*}

In terms of the generating series
\begin{align}
G^{m|n}_N(u;s)=\sum_{r=0}^\infty(-u)^{-r}G^{m|n}_{r,N}(s)\,,
\quad
G^{*m|n}_N(u;s)=\sum_{r=0}^\infty(-u)^{-r}G^{*m|n}_{r,N}(s)\,,
\label{genG}
\end{align}
the definition \eqref{dfnG}, \eqref{dfnG*} can be written compactly as
\begin{align}
&
G^{m|n}_N(u;s)=P^{m|n}_N\prod_{i=1}^m (1-t_i(s)/u)
\prod_{i=m+1}^{m+n}J_i(u;s)\,,
\label{genGus}\\
&G^{*,m|n}_N(u;s)=P^{*,m|n}_N\prod_{i=m+1}^{m+n} (1-t_i(s)/u)
\prod_{i=1}^{n}J^*_i(u;s)\,.
\label{genGus*}
\end{align}
Here we set
\begin{align*}
&J_i(u;s)=\sum_{c=0}^\infty(-u)^{-c}J_i^c(s) 
=\sum_{l=0}^N(-1)^l\frac{q^{N-2l}}{1-\bfs_i/(q_2^lq_3^Nu)}
\tilde{I}_{N,N,l}(dx_{i-1},x_i)\,,
\\
&J^*_i(u;s)=\sum_{c=0}^\infty(-u)^{-c}J^{*,c}_i(s) 
=\sum_{l=0}^N(-1)^l\frac{q^{N-2l}}{1-\bfs_i/(q_2^{l}q_1^{-N}u)}
\tilde{I}_{N,N,l}(d^{-1}x_{i-1},x_i)\,,
\end{align*}
with $\tilde{I}_{N,N,l}$ being given in \eqref{Itilde2}. 
\bigskip

Our goal in this subsection is to show the following.
\begin{prop}\label{prop:GinB}
For all $r\ge0$, the elements
$G^{m|n}_{r,N}(s)$, $G^{* m|n}_{r,N}(s)$ 
belong to $\cB_{m|n,N}(s)$. 
\end{prop}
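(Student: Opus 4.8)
The plan is to verify directly that the functions $G^{m|n}_{r,N}(s)$ and $G^{*,m|n}_{r,N}(s)$ satisfy the two requirements defining membership in $\cB_{m|n,N}(s)$: first that they lie in $Sh^0_{m|n,N}$ (correct pole structure, (skew-)symmetry, and the wheel conditions), and second that the limits \eqref{limIZ} exist for every $\bk=(k_1,\ldots,k_{m+n})$ with $0\le k_i\le N$ and satisfy the ratio relation \eqref{ratio of limits}. By the isomorphism \eqref{cyclmn}, the statement for $G^{*,m|n}_{r,N}(s)$ follows from that for $G^{m|n}_{r,N}(s)$ (with $m,n$ swapped and the parameters inverted), so I would concentrate on $G^{m|n}_{r,N}(s)$.

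For the first requirement, the prefactor $P^{m|n}_N$ is designed to supply exactly the simple poles $\prod_i\prod_{a,b}(x_{i,a}-x_{i+1,b})^{-1}$ demanded by condition (i), while the factors $\bar{x}_m\Delta(x_m)\Delta(x_{m+n})$ and the numerator $\prod(x_{i,a}-q_2x_{i,b})$ keep the remaining part polynomial and install the required skew-symmetry in the fermionic colors $x_m,x_{m+n}$ and symmetry in the bosonic ones. The functions $t_i(s)$ are simple ratios of the $\bar{x}_i$ and are manifestly regular, so the only delicate input is the behavior of the $J^c_i(s)=(-\bfs_i d^N)^c\,\tilde I^{c+1}_{N,N}(dx_{i-1},x_i)$ at the fermionic colors. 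Here I would invoke Lemma \ref{lem:Iwheel}: the vanishing of $I^c_{N,N}(y,z)$ when $y_1=q^{-1}z_1,\,y_2=qz_1$ (and the dual statement) is precisely what yields the quartic wheel conditions (c) and (f) at the fermionic vertices after the shift $y=dx_{i-1}$, $z=x_i$, while the cubic conditions (a),(b),(d),(e) come from the $q_2$-factors in $P^{m|n}_N$ together with the pole cancellations. I would check each of (a)--(f) by matching the claimed zero locus against an explicit vanishing of one of these ingredients.

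For the second requirement I would compute the $\xi\to0$ and $\xi\to\infty$ limits factor by factor, using Lemma \ref{lem:asy-Ic} for the $\tilde I^{c}_{N,N}$ pieces and elementary homogeneity counts for $P^{m|n}_N$ and the $t_i(s)$. The key point is that each term of $G^{m|n}_{r,N}(s)$, being labeled by a semistandard column tableau, has its own well-defined rescaling weight; Lemma \ref{lem:Ic-nonzero} guarantees that the individual limits exist and controls precisely which terms survive (the condition $c>\pm(k-l)$ selecting the nonvanishing contributions). Assembling these, the ratio $F^{\bk}_\infty/F^{\bk}_0$ should collapse to a monomial in the $s_i$ and a power of $d$; the exponents of $s_i$ track the color-$i$ scaling and must reproduce $\prod_i s_i^{k_i}$, while the $d$-powers accumulate from the $d^N$-twists in $J^c_i(s)$ and from the asymptotics of $\tilde I_{N,N}(dx_{i-1},x_i)$ to give exactly $d^{2N(k_m-k_{m+n})}$.

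The main obstacle will be the bookkeeping of the exponents in the second step. Because the limits act simultaneously on all colors and each $J^c_i(s)$ mixes two adjacent color groups through the argument $dx_{i-1}$, the contributions of a single rescaled color $i$ to the $\xi$-power and to the $s$- and $d$-exponents are spread across several factors, and one must verify that they sum correctly for \emph{every} choice of $\bk$ and for every tableau term, with the term-dependent pieces canceling so that the universal ratio \eqref{ratio of limits} emerges. I expect this to reduce, after Lemmas \ref{lem:asy-Ic} and \ref{lem:Ic-nonzero}, to a finite combinatorial identity on the exponents, which I would record rather than expand in full.
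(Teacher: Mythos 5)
Your first step (membership in $Sh^0_{m|n,N}$) matches the paper's proof exactly: the cubic wheel conditions come from the factors $x_{i,a}-q_2x_{i,b}$ in $P^{m|n}_N$, the quartic ones from Lemma \ref{lem:Iwheel} applied to $\tilde{I}^{c+1}_{N,N}(dx_{i-1},x_i)$, and your "elementary homogeneity counts" for $P^{m|n}_N$ and the $t_i(s)$ together with Lemma \ref{lem:asy-Ic} are precisely the content of the paper's preparatory Lemmas \ref{lem:ordPtJ} and \ref{lem:limP}. Handling $G^{*,m|n}_{r,N}(s)$ by symmetry is also fine.

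The gap is in how you propose to conclude the ratio relation \eqref{ratio of limits}. You expect to verify, for every tableau term, that "the term-dependent pieces cancel so that the universal ratio emerges." But no individual tableau term satisfies \eqref{ratio of limits}: the terms surviving as $\xi\to\infty$ and those surviving as $\xi\to0$ form two generally \emph{disjoint} subsets of the sum \eqref{dfnG}. Writing $A_{\pm}=\{i\le m \mid k_{i-1}-k_i=\pm1\}$, a term $t_{i_1}\cdots t_{i_p}\prod_i J_i^{c_i}$ has nonzero limit at $\infty$ only if $A_+\subset\{i_1,\ldots,i_p\}$, $A_-\cap\{i_1,\ldots,i_p\}=\emptyset$ and $c_i\ge k_{i-1}-k_i$, while nonzero limit at $0$ requires the opposite inclusions $A_-\subset\{i_1,\ldots,i_p\}$, $A_+\cap\{i_1,\ldots,i_p\}=\emptyset$ and $c_i\ge k_i-k_{i-1}$; whenever $A_+\cup A_-\neq\emptyset$ these are incompatible, so every term surviving at one limit vanishes at the other, and its "ratio" is $0$ or undefined. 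The missing idea — the heart of the paper's proof — is a bijection between the two index sets: on the $\infty$ side write $\{i_1,\ldots,i_p\}=A_+\sqcup\{j_1,\ldots,j_u\}$ and $c_i=c'_i+k_{i-1}-k_i$, on the $0$ side write $\{i'_1,\ldots,i'_{p'}\}=A_-\sqcup\{j_1,\ldots,j_u\}$; under this correspondence the summands agree (the shifted superscripts produce the same products $\tilde{I}^{c'_i+1}_{k_{i-1},k_i}\,\tilde{I}^{c'_i+1+k_{i-1}-k_i}_{N-k_{i-1},N-k_i}$ on both sides), so the two sums are proportional with constant $\bigl(\prod_{i\in A_+}t_i(s)/\prod_{i\in A_-}t_i(s)\bigr)\prod_{i=m+1}^{m+n}(-\bfs_id^N)^{k_{i-1}-k_i}$ times $Y_\infty/Y_0$, which collapses to $\prod_{i}s_i^{k_i}\,d^{2N(k_m-k_{m+n})}$. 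Without this re-indexing step your plan stalls exactly where you expected only finite exponent bookkeeping.
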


For the proof we prepare two Lemmas.

Let $\bk=(k_1,\ldots,k_{m+n})$ be such that $0\le k_i\le N$ for all $i$.
Consider each summand in $G^{m|n}_{r,N}(s)$ with the substitution
$x_i\to (\xi x_i', x_i'')$, where $x_i'=(x_{i,1},\ldots,x_{i,k_i})$ and 
$x_i''=(x_{i,k_i+1},\ldots,x_{i,N})$: 
\begin{align}
\bigl(P^{m|n}_N  t_{i_1}(s)\cdots t_{i_p}(s) \prod_{i=m+1}^{m+n} 
J_i^{c_i}(s)\bigr)^{\bk}_\xi\,.
\label{PtJ}
\end{align}

Our first task is to estimate its behavior
as $\xi\to\infty$ or $\xi\to 0$.  

\begin{lem}\label{lem:ordPtJ}
(i) The limit of each term \eqref{PtJ} exists
for both $\xi\to\infty$ and $\xi\to 0$. 

(ii) The limit  $\xi\to\infty$ 
is non-zero only if the following are satisfied:
\begin{align*}
&k_{i-1}-k_i \in \{0,1\} \quad \text{if $i\in\{i_1,\ldots,i_p\}$}\,,
\\
&k_{i-1}-k_i \in \{0,-1\} \quad 
\text{if $i\in\{1,\ldots,m\}\backslash\{i_1,\ldots,i_p\}$}\,,
\\
&c_i\ge k_{i-1}-k_i\,\quad
\text{if $i\in\{m+1,\ldots, m+n\}$}.
\end{align*}

(iii) The limit $\xi\to 0$ is non-zero only if 
the following are satisfied:
\begin{align*}
&k_{i-1}-k_i \in \{0,-1\} \quad \text{if $i\in\{i_1,\ldots,i_p\}$}\,,
\\
&
k_{i-1}-k_i \in \{0,1\} \quad 
\text{if $i\in\{1,\ldots,m\}\backslash\{i_1,\ldots,i_p\}$}\,,
\\
&c_i\ge -k_{i-1}+k_i\quad
\text{if $i\in\{m+1,\ldots, m+n\}$}.
\end{align*}
\end{lem}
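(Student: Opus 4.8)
The plan is to analyze each summand
\begin{align*}
\bigl(P^{m|n}_N\, t_{i_1}(s)\cdots t_{i_p}(s) \prod_{i=m+1}^{m+n} J_i^{c_i}(s)\bigr)^{\bk}_\xi
\end{align*}
factor by factor, tracking the leading power of $\xi$ contributed by each piece as $\xi\to\infty$ and as $\xi\to 0$. First I would dispose of the prefactor $P^{m|n}_N$: it is a product of binomials of the form $(x_{i,a}-q_2 x_{i,b})$, $(x_{i,a}-x_{i+1,b})$, together with $\bar x_m\Delta(x_m)\Delta(x_{m+n})$, and each such binomial $(\eta x-\eta' y)^{\pm1}$ under the substitution $x_{i,a}\to\xi x_{i,a}'$ or $x_{i,a}''$ contributes a clean power of $\xi$ with a nonzero leading coefficient, so $P^{m|n}_N$ poses no vanishing issues and only shifts the overall $\xi$-exponent by an amount computable from the pattern of primed versus unprimed variables. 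The monomial factors $t_{i_1}(s)\cdots t_{i_p}(s)$ are ratios of the $\bar x_{i-1}/\bar x_i$, and under the splitting $\bar x_i\to \xi^{k_i}\bar x_i'\cdot \bar x_i''$ each $t_i$ contributes exactly $\xi^{k_{i-1}-k_i}$ with a nonzero coefficient. Hence the only source of possible vanishing or of genuine constraints is the $J_i^{c_i}(s)=(-\bfs_i d^N)^{c_i}\tilde I^{c_i+1}_{N,N}(dx_{i-1},x_i)$.

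The heart of the argument is therefore to feed the substitution into the functions $\tilde I^{c}_{N,N}$ and invoke Lemma \ref{lem:Ic-nonzero}. Clearing denominators via \eqref{Itilde}, I would write $\tilde I^{c}_{N,N}$ in terms of $I^c_{N,N}$ and apply the asymptotics of Lemma \ref{lem:asy-Ic} with $M=N$, with the roles of $k,l$ played by $k_{i-1}$ and $k_i$ respectively (the first argument of $J_i$ is built from $x_{i-1}$, the second from $x_i$). Lemma \ref{lem:Ic-nonzero} then tells us that the $\xi\to\infty$ leading term of $I^{c_i+1}_{N,N}(d\,\xi x'_{i-1},dx''_{i-1};\xi x'_i,x''_i)$ exists and is nonzero only if $c_i+1>k_{i-1}-k_i$, i.e.\ $c_i\ge k_{i-1}-k_i$, and the $\xi\to0$ leading term is nonzero only if $c_i+1>-(k_{i-1}-k_i)$, i.e.\ $c_i\ge -k_{i-1}+k_i$. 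This is exactly the fermionic constraint in (ii) and (iii). For the bosonic constraints, the factors $(x_{i,a}-x_{i+1,b})$ in the denominator of $P^{m|n}_N$ produce the requirement that $k_{i-1}-k_i\in\{0,\pm1\}$: a single binomial $(x_{i-1,a}-x_{i,b})^{-1}$ forces the difference of the numbers of primed variables of adjacent colors to stay within $\{0,1\}$ (respectively $\{0,-1\}$) in order that no higher-order pole in $\xi$ survive; combining this bookkeeping with whether $i$ carries a $t_i$ factor (which tilts the allowed difference by the sign of $\xi^{k_{i-1}-k_i}$) separates the two cases $i\in\{i_1,\dots,i_p\}$ and $i\notin\{i_1,\dots,i_p\}$ in the stated way.

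Part (i), the mere existence of both limits, follows from the same analysis once we observe that every contributing factor has a well-defined leading $\xi$-power: the binomials are homogeneous of degree one, the $t_i$ are monomials, and Lemma \ref{lem:asy-Ic} guarantees the leading asymptotics of $I^c_{N,N}$ exist in both regimes. Thus no term can blow up or oscillate, and the limit is just the product of leading coefficients. The main obstacle I anticipate is the careful $\xi$-exponent bookkeeping around the fermionic colors $m$ and $m+n$: there the prefactor carries the extra pieces $\bar x_m$, $\Delta(x_m)$, $\Delta(x_{m+n})$, and the $J_i$ themselves involve the shifted arguments $dx_{i-1}$, so I must make sure that the shift by $d^{\pm N}$ and the $d$-dependence in the definition of $J_i^{c_i}$ do not affect the \emph{vanishing} conclusions (they only rescale nonzero leading coefficients) while correctly contributing to the overall power of $\xi$ and of $d$. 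Getting the sign conventions and the exact exponent of $\xi$ right is routine but error-prone; the conceptual content, however, is entirely contained in Lemma \ref{lem:asy-Ic} and Lemma \ref{lem:Ic-nonzero}, which reduce the problem to the comparison $c_i+1\gtrless \pm(k_{i-1}-k_i)$.
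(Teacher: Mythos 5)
Your identification of the fermionic constraints is correct and matches the paper: feeding the substitution into $\tilde I^{c_i+1}_{N,N}(dx_{i-1},x_i)$ and invoking Lemma \ref{lem:asy-Ic} together with Lemma \ref{lem:Ic-nonzero} is exactly how the paper obtains $c_i\ge k_{i-1}-k_i$ (resp.\ $c_i\ge -k_{i-1}+k_i$). However, the rest of the proposal has a genuine gap, and it is precisely where the real work of the lemma lies. Your argument for part (i) --- ``every contributing factor has a well-defined leading $\xi$-power, thus no term can blow up'' --- is a non sequitur: individual factors certainly do blow up (already $\bigl(t_i\bigr)^{\bk}_\xi=\xi^{k_{i-1}-k_i}t_i\to\infty$ as $\xi\to\infty$ when $k_{i-1}>k_i$, and the numerator binomials of $P^{m|n}_N$ grow as well). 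Existence of the limit is the statement that the \emph{total} exponent of $\xi$ is $\le 0$ as $\xi\to\infty$ (resp.\ $\ge0$ as $\xi\to0$), and this requires an actual computation. The paper carries it out: writing $(P^{m|n}_N)^{\bk}_\xi=O(\xi^{\mu_\infty})$, $(J_i^{c_i})^{\bk}_\xi=O(\xi^{N(k_{i-1}+k_i)-k_{i-1}k_i})$, and summing all contributions, one finds the nontrivial identity
\begin{align*}
\nu_\infty=-\tfrac{1}{2}\sum_{t=1}^p (k_{i_t-1}-k_{i_t})(k_{i_t-1}-k_{i_t}-1)
-\tfrac{1}{2}\sum_{1\le i\le m,\ i\neq i_1,\ldots,i_p} (k_{i-1}-k_i)(k_{i-1}-k_i+1)\,,
\end{align*}
a sum of nonpositive terms (each is minus a product of consecutive integers), and similarly for $\nu_0$ with the signs of the $\pm1$ shifts reversed. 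This inequality $\nu_\infty\le0$ \emph{is} part (i).

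The same identity is also what yields the bosonic constraints in (ii) and (iii), and here your proposed mechanism is not correct: a single denominator binomial $(x_{i-1,a}-x_{i,b})^{-1}$ contributes the same power of $\xi$ regardless of the values of $k_{i-1}-k_i$, so no individual factor ``forces'' anything. The constraints emerge only globally: the limit can be nonzero only if $\nu_\infty=0$, and since each quadratic summand above is separately nonpositive, all must vanish, which for integers happens exactly when $k_{i-1}-k_i\in\{0,1\}$ for $i\in\{i_1,\ldots,i_p\}$ and $k_{i-1}-k_i\in\{0,-1\}$ otherwise. Without the explicit computation of $\mu_\infty,\mu_0$ and the rearrangement of the resulting quadratic form into this sum-of-consecutive-products shape, neither part (i) nor the bosonic half of parts (ii)--(iii) is established; this bookkeeping is not ``routine but error-prone'' side work, it is the core of the proof.
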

\begin{proof}
A direct calculation shows that
\begin{align*}
& (P^{m|n}_N)^{\bk}_\xi=
\begin{cases}
O(\xi^{\mu_\infty})& \text{as $\xi\to\infty$},\\
O(\xi^{\mu_0})& \text{as $\xi\to 0$},\\
\end{cases}
\end{align*} 
where
\begin{align*}
\mu_\infty&=
-N\sum_{i=m+1}^{m+n}\bigl(k_{i-1}+k_i\bigr)
-\sum_{i=1}^{m-1}k_i^2+\sum_{i=1}^{m+n} k_{i-1}k_i
-\frac{1}{2}k_m(k_m-1)-\frac{1}{2}k_{m+n}(k_{m+n}+1)
\,,
\\ 
\mu_0&=\sum_{i=1}^{m-1}k_i^2-\sum_{i=1}^{m+n}k_{i-1}k_i
+\frac{1}{2}k_m(k_m+1)+\frac{1}{2}k_{m+n}(k_{m+n}-1)
\,.
\end{align*}
Clearly $\bigl(t_i\bigr)^{\bk}_{\xi}=\xi^{k_{i-1}-k_i}t_i$.
From Lemma \ref{lem:asy-Ic} we have also
\begin{align*}
&\bigl(J_i^{c_i} \bigr)^{\bk}_\xi=
\begin{cases}
 O(\xi^{N(k_{i-1}+k_i)-k_{i-1}k_i}) & \text{as $\xi\to\infty$},
\\
 O(\xi^{k_{i-1}k_i}) & \text{as $\xi\to 0$}.
\end{cases}
\end{align*}

Combining these, we find that 
\begin{align*}
\bigl(P^{m|n}_N  t_{i_1}(s)\cdots t_{i_p}(s) \prod_{i=m+1}^{m+n} 
J_i^{c_i}(s)\bigr)^{\bk}_\xi
= \begin{cases}
O(\xi^{\nu_\infty})& \text{as $\xi\to\infty$},\\
O(\xi^{\nu_0})& \text{as $\xi\to 0$},\\
\end{cases}
\end{align*}
with
\begin{align*}
\nu_\infty
&=\mu_\infty+\sum_{t=1}^p\bigl(k_{i_t-1}-k_{i_t}\bigr)
+\sum_{i=m+1}^{m+n}\bigl(N(k_{i-1}+k_i)-k_{i-1}k_i\bigr)
\\
&=-\frac{1}{2}\sum_{t=1}^p (k_{i_t-1}-k_{i_t})(k_{i_t-1}-k_{i_t}-1)
-\frac{1}{2}\sum_{1\le i\le m\atop i\neq i_1,\ldots,i_p} (k_{i-1}-k_i)(k_{i-1}-k_i+1)
\,,
\\
\nu_0
&=\mu_0+\sum_{t=1}^p\bigl(k_{i_t-1}-k_{i_t}\bigr)
+\sum_{i=m+1}^{m+n}k_{i-1}k_i
\\
&=\frac{1}{2}\sum_{t=1}^p (k_{i_t-1}-k_{i_t})(k_{i_t-1}-k_{i_t}+1)
+\frac{1}{2}\sum_{1\le i\le m \atop
i\neq i_1,\ldots,i_p} (k_{i-1}-k_i)(k_{i-1}-k_i-1)\,.
\end{align*}

For any $k_i$ we have $\nu_\infty\le 0$, which implies that
the limit $\xi\to\infty$ exists.  
The equality holds if and only if $k_{i-1}-k_i\in\{0,1\}$ 
for $i=i_1,\ldots,i_p$ and $k_{i-1}-k_i\in\{0,-1\}$ 
for $1\le i\le m$, $i\neq i_1,\ldots,i_p$. 
Furthermore, by Lemma \ref{lem:Ic-nonzero},
we must have $c_i\ge k_{i-1}-k_{i}$ for $m+1\le i\le m+n$  
in order that the limit be non-zero. 
This proves (ii). 

Assertion (iii) is shown similarly.
\end{proof}

Next we compute the leading coefficients.

\begin{lem}\label{lem:limP}
We have
\begin{align*}
\bigl(P^{m|n}_N\bigr)^{\bk}_\xi&
=\begin{cases}
\xi^{\mu_\infty}\Bigl(X_\infty +O(\xi^{-1})\Bigr)
&\text{as $\xi\to\infty$},
\\
\xi^{\mu_0}\Bigl(X_0 +O(\xi)\Bigr)
&\text{as $\xi\to 0$},
\end{cases}
\end{align*}
where $\mu_\infty,\mu_0$ are as in the proof of Lemma \ref{lem:ordPtJ}
and
\begin{align*}
\frac{X_\infty}{X_0} =(-1)^{k_m(N-k_m)+k_{m+n}(N-k_{m+n})}
\prod_{i=1}^{m}
\Bigl(\frac{\bar{x}_{i-1}}{\bar{x}_i}\Bigr)^{k_i-k_{i-1}}
\prod_{i=m+1}^{m+n}
\frac{\bar{x}_{i-1}^{k_i}\bar{x}_i^{k_{i-1}}}
{\bigl({\bar{x}'_{i-1}}{\bar{x}'_i}\bigr)^N}
\,.
\end{align*} 
\end{lem}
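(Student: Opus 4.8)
The plan is to treat $P^{m|n}_N$ as a finite product of elementary factors---the binomials $x_{i,a}-q_2x_{i,b}$ ($1\le i\le m-1$) and $x_{i,a}-x_{i+1,b}$ in the denominator, the differences $x_{i,a}-x_{i,b}$ coming from $\Delta(x_m)$ and $\Delta(x_{m+n})$, and the monomials in $\bar{x}_m$---and to track the leading $\xi$-behavior of each factor under the rescaling $x_i\mapsto(\xi x_i',x_i'')$. Each factor has a purely monomial leading term in $\xi$ whose first correction is down by one power of $\xi$ (a binomial $\xi u-cv$ equals $\xi u(1-cv/\xi u)$ as $\xi\to\infty$ and $-cv(1-\xi u/cv)$ as $\xi\to0$), so the full product has the asserted shape with errors $O(\xi^{-1})$ and $O(\xi)$ respectively. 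The exponents $\mu_\infty,\mu_0$ were already computed in the proof of Lemma \ref{lem:ordPtJ}, so it remains only to identify the leading coefficients $X_\infty,X_0$ and form their ratio. Throughout I write $\bar{x}'_i=\prod_{a=1}^{k_i}x_{i,a}$ and $\bar{x}''_i=\prod_{a=k_i+1}^N x_{i,a}$, so that $\bar{x}_i=\bar{x}'_i\bar{x}''_i$.

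The decisive simplification is that a factor contributes nontrivially to $X_\infty/X_0$ only when exactly one of its two variables is rescaled. If both are rescaled, $\xi u-c\xi v=\xi(u-cv)$ has leading coefficient $u-cv$ in both limits; if neither is rescaled the factor is unchanged; in either case it cancels in the ratio. For a mixed factor one reads off
\[
\xi u-c\,v=\begin{cases}u\,(\xi+O(1))&\text{as }\xi\to\infty,\\-c\,v\,(1+O(\xi))&\text{as }\xi\to0,\end{cases}
\]
so its contribution to $X_\infty/X_0$ is $u/(-cv)$, and symmetrically $(-cu)/v$ when the rescaled variable is the second one. Hence $X_\infty/X_0$ is the product of these one-variable ratios over all mixed factors of $P^{m|n}_N$, with the denominator factors entering to the inverse power.

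First I would dispose of the constants and signs. In each numerator product $\prod_{a,b}(x_{i,a}-q_2x_{i,b})$ the mixed factors with $a\le k_i<b$ and with $b\le k_i<a$ occur in the same number $k_i(N-k_i)$ and carry reciprocal powers of $-q_2$, so all $q_2$'s cancel and the two families contribute no net sign. Each denominator product $\prod_{a,b}(x_{i,a}-x_{i+1,b})$ contributes the sign $(-1)^{k_i(N-k_{i+1})+(N-k_i)k_{i+1}}=(-1)^{N(k_i+k_{i+1})}$; multiplying cyclically over $i=1,\dots,m+n$ (with $x_{m+n+1}=x_1$) the exponent becomes $2N\sum_i k_i$, which is even, so these signs cancel altogether. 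The only surviving sign comes from the antisymmetric factors: in $\Delta(x_m)$ the mixed differences $x_{m,a}-x_{m,b}$ with $a\le k_m<b$ are the only ones (since $a<b$), giving $(-1)^{k_m(N-k_m)}$, and likewise $(-1)^{k_{m+n}(N-k_{m+n})}$ from $\Delta(x_{m+n})$. This reproduces the prefactor $(-1)^{k_m(N-k_m)+k_{m+n}(N-k_{m+n})}$.

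It then remains to assemble the monomial part, regrouping the surviving one-variable ratios from the numerator binomials (present only before the equator), from $\bar{x}_m$, from $\Delta(x_m),\Delta(x_{m+n})$, and from the denominator binomials, using $\bar{x}_i=\bar{x}'_i\bar{x}''_i$ repeatedly. This produces the claimed $\prod_{i=1}^m(\bar{x}_{i-1}/\bar{x}_i)^{k_i-k_{i-1}}\prod_{i=m+1}^{m+n}\frac{\bar{x}_{i-1}^{k_i}\bar{x}_i^{k_{i-1}}}{(\bar{x}'_{i-1}\bar{x}'_i)^N}$; the asymmetry between the two index ranges simply reflects that the numerator binomials $x_{i,a}-q_2x_{i,b}$ are present before the equator but absent after it. The cases $n=0$ (and the special normalizations at $m=1,2$) are handled by the identical argument applied to the correspondingly modified $P^{m|n}_N$, the absent factors dropping out. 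I expect the main obstacle to be purely organizational: keeping exact track of how the partial products $\bar{x}'_i,\bar{x}''_i$ accumulate color by color and verifying that they regroup into the stated exponents, since all the analytic input is the one-factor asymptotics above.
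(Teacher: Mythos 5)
Your proposal is correct and takes essentially the same route as the paper: a direct factor-by-factor computation of the leading asymptotics of $P^{m|n}_N$ under the rescaling, where the paper records $X_\infty$ and $X_0$ separately relative to the product $P'P''$ of the unmixed factors (which then cancels in the ratio), while you pass directly to the ratio by observing that only mixed factors contribute. Your sign and $q_2$ cancellations and the final monomial regrouping agree exactly with the paper's explicit formulas, so nothing essential is missing.
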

\begin{proof}
This is a direct computation. Assuming $n\ge1$, 
set
\begin{align*}
P'&= \frac{\prod_{i=1}^{m-1}\prod_{1\le a,b\le k_i}(x_{i,a}-q_2x_{i,b})
} 
{\prod_{i=1}^{m+n}\prod_{1\le a\le k_i\atop 1\le b\le k_{i+1}}(x_{i,a}-x_{i+1,b})}
\cdot\bar{x}'_m\,\Delta(x'_m)\Delta(x'_{m+n})\,,
\\
P''&= \frac{\prod_{i=1}^{m-1}\prod_{k_i+1\le a,b\le N}(x_{i,a}-q_2x_{i,b})
} 
{\prod_{i=1}^{m+n}\prod_{k_i+1\le a\le N\atop k_{i+1}+1\le b\le N}(x_{i,a}-x_{i+1,b})}
\cdot \bar{x}''_m\,\Delta(x''_m)\Delta(x''_{m+n})\,.
\end{align*}
Then the factors $X_\infty$, $X_0$ are given respectively as follows.
\begin{align*}
 \frac{X_\infty}{P'P''}&
=(-q_2)^{\sum_{i=1}^{m-1}k_i(N-k_i)}
(-1)^{\sum_{i=1}^{m+n}k_i(N-k_{i-1})}
\\
&\times 
\frac{\prod_{i=1}^{m-1}\bigl(\bar{x}'_i\bigr)^{2(N-k_i)}
}
{\prod_{i=1}^{m+n}
\bigl(\bar{x}'_i\bigr)^{N-k_{i-1}}
\bigl(\bar{x}'_{i-1}\bigr)^{N-k_i}
}
\bigl(\bar{x}'_m\bigr)^{N-k_m}\bigl(\bar{x}'_{m+n}\bigr)^{N-k_{m+n}}
\,,
\\
\frac{X_0}{P'P''}&
=(-q_2)^{\sum_{i=1}^{m-1}k_i(N-k_i)}
(-1)^{\sum_{i=1}^{m+n}k_{i-1}(N-k_i)+k_m(N-k_m)+k_{m+n}(N-k_{m+n})}
\\
&\times 
\frac{\prod_{i=1}^{m-1}\bigl(\bar{x}''_i\bigr)^{2k_i}
}
{\prod_{i=1}^{m+n}
\bigl(\bar{x}''_i\bigr)^{k_{i-1}}
\bigl(\bar{x}''_{i-1}\bigr)^{k_i}
}
\bigl(\bar{x}''_m\bigr)^{k_m}\bigl(\bar{x}''_{m+n}\bigr)^{k_{m+n}}
\,.
\end{align*}
Taking their ratio we obtain the assertion of the Lemma.

In the case $n=0$ the result can be checked easily.
\end{proof}
\medskip

\noindent {\it Proof of Proposition \ref{prop:GinB}.}\quad
It is enough to consider $G^{m|n}_{r,N}(s)$.

First we need to check that the elements 
$G^{m|n}_{r,N}(s)$ 
belong to the shuffle algebra $Sh^0_{m|n,N}$. 

For $1\le i\le m-1$ we have $P^{m|n}_N=0$ if $x_{i,a}=q_2x_{i,b}$.
By Lemma \ref{lem:Iwheel},
for $m+1\le i\le m+n$ we have  $J_{i}^{c_i}(s)=0$ 
if $q_3^{-1}x_{i-1,a}=q_1x_{i-1,b}=x_{i,c}$, 
or if $q_3x_{i,a}=q_1^{-1}x_{i,b}=x_{i-1,c}$.
Therefore $G^{m|n}_{r,N}(s)$ satisfies the wheel conditions.

Next we show the asymptotic conditions. 
 We may assume that 
$k_{i-1}-k_i\in\{-1,0,1\}$ for $1\le i\le m$; 
otherwise $(G^{m|n}_{r,N}(s))^{\bk}_\infty$, 
$(G^{m|n}_{r,N}(s))^{\bk}_0$  are both zero. 
For $\epsilon\in\{-1,0,1\}$ set 
\begin{align*}
&A_{\epsilon}=\{i\mid 1\le i\le m\,,\ k_{i-1}-k_i=\epsilon\}\,,
\end{align*}
so that $ \{1,\ldots,m\}=A_+\sqcup A_0\sqcup A_-$ and
$|A_+|-|A_-|=\sum_{i=1}^m(k_{i-1}-k_i)=k_{m+n}-k_{m}$. 

By Lemma \ref{lem:asy-Ic}, Lemma \ref{lem:limP} and Lemma \ref{lem:ordPtJ},
we obtain
\begin{align}
(G^{m|n}_{r,N}(s))^\bk_\infty
&=Y_\infty\sum_{} t_{i_1}(s)\cdots t_{i_p}(s) 
\prod_{i=m+1}^{m+n}\bigl(-\bfs_id^N\bigr)^{c_i}
\tilde{I}^{c_i+1-k_{i-1}+k_i}_{k_{i-1},k_i}(dx_{i-1}',x_i')
\tilde{I}^{c_{i}+1}_{N-k_{i-1},N-k_i}(d x_{i-1}'',x_i'')\,,
\label{GrNinf}
\\
&Y_\infty=X_\infty\cdot \prod_{i=m+1}^{m+n}(-1)^{(N-k_{i-1})k_i}
d^{k_{i-1}(N-k_i)}
(\bar{x}'_{i-1})^{N-k_i}(\bar{x}'_{i})^{N-k_{i-1}}\,,
\nn
\end{align}
where the sum is taken over $i_1,\ldots,i_p$, $p\ge0$,
$c_{m+1},\ldots,c_{m+n}\ge0$, such that 
\begin{align*}
& A_+\subset\{i_1,\ldots,i_p\}\,,\quad A_-\cap\{i_1,\ldots,i_p\}=\emptyset\,,
\\
&c_i\ge k_{i-1}-k_i\,\quad \text{for $m+1\le i\le m+n$}\,,
\\
&p+\sum_{i=m+1}^{m+n}c_i=r\,.
\end{align*}
Similarly we have
\begin{align}
(G^{m|n}_{r,N}(s))^\bk_{0}
&=Y_0\sum_{} t_{i'_1}(s)\cdots t_{i'_{p'}}(s) \prod_{i=m+1}^{m+n}
\bigl(-\bfs_id^N\bigr)^{c'_i}
\tilde{I}^{c'_i+1}_{k_{i-1},k_i}(dx_{i-1}',x_i')
\tilde{I}^{c'_{i}+1+k_{i-1}-k_i}_{N-k_{i-1},N-k_i}(d x_{i-1}'',x_i'')\,,
\label{GrN0}
\\
&Y_0=X_0\cdot \prod_{i=m+1}^{m+n}(-1)^{k_{i-1}(N-k_i)}
d^{(N-k_{i-1})k_i}
(\bar{x}''_{i-1})^{k_i}(\bar{x}''_{i})^{k_{i-1}}\,,
\nn
\end{align}
where the sum is taken over $i'_1,\ldots,i'_{p'}$, $p'\ge0$,
$c'_{m+1},\ldots,c'_{m+n}\ge0$, such that 
\begin{align*}
& A_-\subset\{i'_1,\ldots,i'_t\}\,,\quad A_+\cap\{i'_1,\ldots,i'_t\}=\emptyset\,,
\\
&c'_i\ge -k_{i-1}+k_i\,\quad \text{for $m+1\le i\le m+n$}\,,
\\
&p'+\sum_{i=m+1}^{m+n}c_i'=r\,.
\end{align*}
 
If we set 
$\{i_1,\ldots,i_p\}=A_+\sqcup\{j_1,\ldots,j_u\}$
and $c_i=c'_i+k_{i-1}-k_i$,
then  \eqref{GrNinf} becomes
\begin{align*}
(G^{mn}_{r,N}(s))^\bk_\infty
&=Y_\infty \prod_{i\in A_+}t_i(s)\cdot
\prod_{i=m+1}^{m+n}\bigl(-\bfs_id^N\bigr)^{k_{i-1}-k_i}
\\
&\times
\sum_{} t_{j_1}(s)\cdots t_{j_u}(s)
\prod_{i=m+1}^{m+n}
\bigl(-\bfs_id^N\bigr)^{c'_i}
\tilde{I}^{c'_i+1}_{k_{i-1},k_i}(dx_{i-1}',x_i')
\tilde{I}^{c'_{i}+1+k_{i-1}-k_i}_{N-k_{i-1},N-k_i}(d x_{i-1}'',x_i'')\,.
\end{align*}
Similarly, by setting $\{i'_1,\ldots,i'_{p'}\}
=A_-\sqcup\{j_1,\ldots,j_u\}$, 
\eqref{GrN0} yields
\begin{align*}
(G^{m|n}_{r,N}(s))^\bk_0 
&=Y_0 \prod_{i\in A_-}t_i(s)
\\
&\times
\sum_{} t_{j_1}(s)\cdots t_{j_u}(s) 
\prod_{i=m+1}^{m+n}
\bigl(-\bfs_id^N\bigr)^{c'_i}
\tilde{I}^{c'_i+1}_{k_{i-1},k_i}(dx_{i-1}',x_i')
\tilde{I}^{c'_{i}+1+k_{i-1}-k_i}_{N-k_{i-1},N-k_i}(d x_{i-1}'',x_i'')\,.
\end{align*}
It is simple to check that the second lines of these equations  
have the same ranges of summation and therefore coincide.
Noting
\begin{align*}
 \frac{\prod_{i\in A_+}t_i(s)}{\prod_{i\in A_-}t_i(s)}
=\prod_{i=1}^m t_i(s)^{k_{i-1}-k_i}
\end{align*}
and using Lemma \ref{lem:limP}, we obtain
\begin{align*}
\frac{(G^{m|n}_{r,N}(s))^\bk_\infty}{(G^{m|n}_{r,N}(s))^\bk_0 }
&=\frac{Y_\infty}{Y_0}\cdot
\frac{\prod_{i\in A_+}t_i(s)}{\prod_{i\in A_-}t_i(s)}
\prod_{i=m+1}^{m+n}\bigl(-\bfs_id^N\bigr)^{k_{i-1}-k_i}
\\
&=\frac{X_\infty}{X_0}\prod_{i=m+1}^{m+n}(-d)^{N(k_{i-1}-k_i)}
\prod_{i=m+1}^{m+n}\frac{(\bar{x}'_{i-1}\bar{x}'_i)^N}
{\bar{x}_{i-1}^{k_i}\bar{x}_{i}^{k_{i-1}}}
\cdot
\prod_{i=1}^m t_i(s)^{k_{i-1}-k_i}
\prod_{i=m+1}^{m+n}\bigl(-\bfs_id^N\bigr)^{k_{i-1}-k_i}
\\
&=\prod_{i=1}^{m+n}s_i^{k_i}\cdot d^{2N(k_m-k_{m+n})}\,.
\end{align*}
This completes the proof of Proposition  \ref{prop:GinB}.
\qed
\bigskip

\subsection{Main theorem}

In this section we prove 
Theorem \ref{prop:surj_glm}
and 
Theorem \ref{prop:surj_glmn} below, which are the main results of this paper. 
We shall make explicit 
the dependence of various objects on parameters $q_1,q_2,q_3$
by writing 
$\cB_{m|n}(q_1,q_2,q_3;s)$, 
$P^{m|n}_{N}(q_1,q_2,q_3)$,  
$G^{m|n}_{r,N}(q_1,q_2,q_3;s)$ 
and so forth.
We recall that 
$\cR_{m|n}=\C[T_{i,\nu}\mid 1\le i\le m+n\,,\ \nu\in \Z_{>0}]$
with $\deg T_{i,\nu}=\nu$. 
\medskip

We start with the special case $m=1,n=0$.
\begin{thm}\cite{FHHSY}\label{thm:FHHSY}
The subalgebra $\cB_1(q_1,q_2,q_3)$  of  $Sh_1(q_1,q_2,q_3)$
is commutative, and is isomorphic to the polynomial algebra $\cR_{1}$.
For each choice of $i\in\{1,2,3\}$, the following
set of elements generate  $\cB_1(q_1,q_2,q_3)$:
\begin{align*}
\epsilon_N(x;q_i)
=\prod_{1\le a<b\le N}\frac{(x_a-q_i x_b)(x_a-q_i^{-1}x_b)} 
{(x_a-x_b)^2}\,,\quad N\ge 0\,.
\end{align*}
\qed
\end{thm}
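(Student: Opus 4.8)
Since $m=1,n=0$ we have $m+n=1$, so the only parameter is $s_1=1$ and, because $k_m=k_{m+n}=k_1$, the defining relation \eqref{ratio of limits} collapses to $F^{\bk}_\infty=F^{\bk}_0$ for all $0\le k_1\le N$; thus $\cB_1=\cB_{1|0}(s)$ with trivial $s$. There are three things to establish: that each $\epsilon_N(x;q_i)$ lies in $\cB_1$, that $\cB_1$ is commutative, and that $\cB_1\cong\cR_1$ with the $\epsilon_N(\,\cdot\,;q_i)$ as free generators. The plan is to get the membership and an \emph{a priori} dimension bound cheaply, to produce enough linearly independent products to saturate the bound, and to isolate the commutativity $\epsilon_M*\epsilon_N=\epsilon_N*\epsilon_M$ as the single hard point.

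\emph{Membership.} The product form of $\epsilon_N(x;q_i)$ already exhibits the pole $\prod_{a<b}(x_a-x_b)^2$ required by the $m=1$ pole condition, with a symmetric Laurent-polynomial numerator. For the wheel conditions one checks, uniformly in $i\in\{1,2,3\}$, that on configuration (a) (where $x_b=q_3x_a$, $x_c=q_2x_b$) and on (b) (where $x_b=q_2x_a$, $x_c=q_3x_b$) one of the factors $(x_\alpha-q_i^{\pm1}x_\beta)$ in the numerator vanishes (e.g.\ for $i=1$ the factor $x_a-q_1x_c$ vanishes, since $x_c=q_1^{-1}x_a$ in both cases). Finally, under the scaling $x_i\mapsto(\xi x_i',x_i'')$ each cross factor $\frac{(\xi x_a'-q_ix_b'')(\xi x_a'-q_i^{-1}x_b'')}{(\xi x_a'-x_b'')^2}$ tends to $1$ both as $\xi\to\infty$ and as $\xi\to0$, while the intra-group factors are unchanged; hence $\epsilon_N^{\bk}_\infty=\epsilon_N^{\bk}_0=\epsilon_{k_1}(x';q_i)\,\epsilon_{N-k_1}(x'';q_i)$ and the ratio in \eqref{ratio of limits} equals $1$, as needed.

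\emph{Dimension and generation.} Running the Gordon filtration of Proposition \ref{prop:Gordon} in the single-color case $m+n=1$ gives $\dim\cB_{1,N}\le p(N)$; indeed for $m+n=1$ every interval is cyclic, so $N$-admissible cyclic classes are exactly partitions of $N$ and $\dim\cR_{1,N}=p(N)$. For the matching lower bound I would show that the ordered products $\epsilon_\lambda=\epsilon_{\lambda_1}*\cdots*\epsilon_{\lambda_\ell}$, $\lambda\vdash N$, are linearly independent, by computing their images under the evaluation maps $\phi_\mu$ of the filtration and checking that the matrix $(\phi_\mu(\epsilon_\lambda))_{\mu,\lambda}$ is triangular and nonsingular for a suitable order on partitions. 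This forces the $\epsilon_\lambda$ to be a basis, so $\{\epsilon_N\}_{N\ge1}$ generates $\cB_1$ and $\dim\cB_{1,N}=p(N)$; the identical argument applies for each $q_i$.

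\emph{Commutativity (the crux).} It remains to prove $\epsilon_M*\epsilon_N=\epsilon_N*\epsilon_M$; note that a basis of ordered monomials alone does \emph{not} give this (cf.\ PBW bases). Writing the difference as a single sum over partitions $\{1,\dots,M+N\}=I\sqcup J$ with weight $\epsilon_M(x_I)\epsilon_N(x_J)\bigl[\prod_{a\in I,b\in J}\omega_{1,1}(x_a,x_b)-\prod_{a\in I,b\in J}\omega_{1,1}(x_b,x_a)\bigr]$, one uses the zeros of $\epsilon_M,\epsilon_N$ at $x_\alpha=q_i^{\pm1}x_\beta$ to cancel the poles of the $\omega$-factors, so that the apparent poles are spurious; the surviving expression is then shown to be forced to vanish by the constraints already imposed (equivalently, one verifies that $\phi_\mu$ of the two orderings agree for every $\mu$, which suffices since the $\phi_\mu$ separate $\on{gr}\cB_{1,M+N}$). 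This residue/telescoping computation is exactly the content of \cite{FHHSY} and is where the real difficulty lies. Granting it, the assignment $T_{1,\nu}\mapsto\epsilon_\nu(\,\cdot\,;q_i)$ defines a surjection $\cR_1\to\cB_1$, which the equality $\dim\cB_{1,N}=p(N)=\dim\cR_{1,N}$ promotes to an isomorphism, proving the theorem for each $i$.
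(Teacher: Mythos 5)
The paper offers no proof of this statement to compare against: Theorem \ref{thm:FHHSY} is imported wholesale from \cite{FHHSY} (hence the tombstone immediately after the statement) and is then used as the $m=1$ induction base in the proof of Theorem \ref{prop:surj_glm}. Judged as a standalone argument, your proposal has one essential gap: the commutativity $\epsilon_M*\epsilon_N=\epsilon_N*\epsilon_M$, together with the freeness of the algebra these elements generate, \emph{is} the content of the theorem, and you do not prove it --- you explicitly ``grant'' it as the content of \cite{FHHSY}. The membership check and the dimension/generation scaffolding are the routine parts; with the crux outsourced, your text is a reduction of the theorem to its source, which is logically the same move the paper makes by citing \cite{FHHSY}, not an independent proof. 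Your fallback strategy --- verify $\phi_\mu(\epsilon_M*\epsilon_N)=\phi_\mu(\epsilon_N*\epsilon_M)$ for every evaluation $\phi_\mu$ and use that the joint kernel of the evaluation maps on $\cB_{1,M+N}$ is trivial --- is viable in principle, but no such computation is attempted, and that computation is exactly where the difficulty sits.

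A secondary, fixable error: you cannot literally ``run the Gordon filtration of Proposition \ref{prop:Gordon} in the single-color case $m+n=1$.'' That proposition and its proof are written for the pole structure $\Pi=\prod_{i}\prod_{r,l}(x_{i,r}-x_{i+1,l})$ of $Sh_{m|n}$; for one color this product contains vanishing diagonal factors, and the actual pole condition for $Sh_1$ is $\prod_{a<b}(x_a-x_b)^2$, so the degree bookkeeping (total degree $N(N-1)$ rather than $(m+n)N^2$) and the matching of linear factors in the proof must be redone. The single-color Gordon-type bound $\dim\cB_{1,N}\le p(N)$ is true, but establishing it is itself part of what \cite{FHHSY} does, so invoking the proposition does not shortcut the reference. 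What is fully correct and verifiable in your write-up: the specialization of \eqref{ratio of limits} to $F^{\bk}_\infty=F^{\bk}_0$ when $m=1$, $n=0$, and the verification that each $\epsilon_N(x;q_i)$, $i\in\{1,2,3\}$, lies in $\cB_1$ (pole shape, symmetry, wheel-condition zeros, and equality of the limits at $0$ and $\infty$).
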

\medskip

Consider the case $n=0$ with $m$ general. 

\begin{thm}\label{prop:surj_glm}
Let $m\ge1$, and assume that $s$ is generic.

(i) The elements $\{G^m_{r,N}(q_1,q_2,q_3;s)\mid  0\le r\le m-1,\ N\ge 0\}$ 
generate
$\cB_m(q_1,q_2,q_3;s)$.

(ii) The restriction of $\pi_m^{m-1}$ 
to $\cB_{m}(q_1,q_2,q_3;s)$ gives an isomorphism of 
$\Z_{\ge0}$-graded algebras
\begin{align*}
\pi_{m}^{m-1}:\cB_{m}(q_1,q_2,q_3;s)
\longrightarrow \cB_{m-1}(\tilde{q}_1,q_2,\tilde{q}_3;\tilde{s})
\otimes \cB_1(q_1q_3^{-m+1},q_2,q_3^{m})\,,
\end{align*}
where $\tilde{q}_i$ are as in \eqref{tilde-q}, and 
$\tilde{s}=(\tilde{s}_1,\ldots,\tilde{s}_{m-2})$ is given by
\begin{align*}
\tilde{s}_1=(s_2\cdots s_{m-1})^{-1}\,,
\quad \tilde{s}_i=s_i\quad \text{for $2\le i\le m-2$}\,.
\end{align*}

(iii) Algebra $\cB_{m}(q_1,q_2,q_3;s)$ is commutative, and is
isomorphic to the polynomial algebra $\cR_{m}$.
\end{thm}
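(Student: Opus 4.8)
The plan is to prove Theorem \ref{prop:surj_glm} by induction on $m$, using the fusion homomorphism $\pi_m^{m-1}$ from Proposition \ref{prop:fusion_glm} as the engine that reduces rank. The base case $m=1$ is exactly Theorem \ref{thm:FHHSY}, which already tells us that $\cB_1$ is commutative and polynomial on the generators $\epsilon_N$. For the inductive step I would establish part (ii) first, since (i) and (iii) will follow from it. By the inductive hypothesis, the target $\cB_{m-1}(\tilde q_1,q_2,\tilde q_3;\tilde s)\otimes\cB_1(q_1q_3^{-m+1},q_2,q_3^m)$ is a commutative polynomial algebra; if I can show $\pi_m^{m-1}$ restricts to an \emph{isomorphism} from $\cB_m$ onto this target, then $\cB_m$ is forced to be commutative and polynomial as well, giving (iii), and the preimages of the standard generators yield (i).

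\textbf{First I would verify that $\pi_m^{m-1}$ maps $\cB_m(s)$ into the claimed tensor product of Bethe algebras.} A priori Proposition \ref{prop:fusion_glm} only lands in $Sh^0_{m-1}\otimes Sh_1$; I must check that the regularity/ratio condition \eqref{ratio of limits} defining $\cB_m(s)$ is preserved and translates into the corresponding conditions for the parameters $\tilde s$ and for the $Sh_1$ factor. This is a computation tracking how the $\xi\to 0$ and $\xi\to\infty$ limits of $F$ behave under the specialization \eqref{xtoy-2}--\eqref{xtoz-2}; the shift $y_j\mapsto q_3^{k_j(m-n)}y_j$ and the explicit factor $d^{\,\cdots}$ appearing in the fusion formula must combine to produce precisely $\prod s_i^{k_i}$ with the reparametrized $\tilde s$. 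The cleanest route is to test the map on the explicit generators: compute $\pi_m^{m-1}(G^m_{r,N}(s))$ and show it equals a sum of tensor products $G^{m-1}_{*,L}(\tilde s)\otimes\epsilon_{N-L}$, using that $G^m_{r,N}$ is built from the elementary symmetric polynomials in $t_1,\dots,t_m$ and that under specialization $t_m$ degenerates into the $Sh_1$ variable $z$. Since by Proposition \ref{prop:GinB} these $G$'s lie in $\cB_m(s)$, and since they are the natural candidates to generate, matching them against generators on the other side simultaneously proves containment and surjectivity.

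\textbf{The surjectivity and injectivity are then handled by a dimension count via the Gordon bound.} Proposition \ref{prop:Gordon} gives $\dim\cB_{m,N}(s)\le\dim\cR_{m,N}$. On the other side, the inductive hypothesis identifies $\cB_{m-1}(\tilde s)\otimes\cB_1$ with $\cR_{m-1}\otimes\cR_1\cong\cR_m$ as graded algebras, so its graded dimensions are exactly $\dim\cR_{m,N}$. Once I know $\pi_m^{m-1}$ restricted to $\cB_m$ is surjective (from the generator computation above), the inequality $\dim\cB_{m,N}(s)\le\dim\cR_{m,N}=\dim(\text{image})$ forces equality degree-by-degree, hence injectivity, hence an isomorphism. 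Genericity of $s$ is used both in Proposition \ref{prop:Gordon} and to guarantee that $\tilde s$ remains generic so the inductive hypothesis applies. Commutativity of $\cB_m$ is then inherited from the commutative target through the injective algebra map, and freeness/polynomiality likewise transfers.

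\textbf{I expect the main obstacle to be the explicit computation $\pi_m^{m-1}(G^m_{r,N}(s))$ and the verification that the parameter $\tilde s$ comes out as stated}, $\tilde s_1=(s_2\cdots s_{m-1})^{-1}$ and $\tilde s_i=s_i$. The fusion formula in Proposition \ref{prop:fusion_glm} carries elaborate products of $d$-powers and $q_i$-shifts, and threading the ratio-of-limits condition through all of them without sign or exponent errors is delicate; this is where the bookkeeping in Lemma \ref{lem:limP} and Lemma \ref{lem:asy-Ic} must be marshalled carefully. The conceptual content—induction plus a dimension squeeze—is clear, but the identity $\pi_m^{m-1}(G^m_{r,N})=\sum_L G^{m-1}_{\cdots}\otimes\epsilon_{\cdots}$ that makes surjectivity manifest is the crux, and confirming it is essentially the hard computation the paper defers as ``lengthy but straightforward'' in the proof of Proposition \ref{prop:fusion-glmn}.
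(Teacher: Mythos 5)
Your overall framework---induction on $m$ through the fusion homomorphism $\pi_m^{m-1}$, explicit images of the generators $G^m_{r,N}$, and a squeeze between the Gordon upper bound of Proposition \ref{prop:Gordon} and the graded dimension of the target---is exactly the paper's strategy, and your insistence on first checking that $\pi_m^{m-1}$ carries $\cB_m(s)$ into $\cB_{m-1}(\tilde s)\otimes\cB_1$ (with $\tilde s$ generic) makes explicit a step the paper uses only implicitly. The genuine gap is in how you obtain surjectivity. You claim that computing $\pi_m^{m-1}(G^m_{r,N}(s))$ as a sum $\sum_L G^{m-1}_{*,L}(\tilde s)\otimes\epsilon_{N-L}$ ``simultaneously proves containment and surjectivity.'' It does not: each such image is a single element in which the would-be generators of the target appear entangled---spread over all bidegrees $(L,N-L)$, and, worse, the two extreme components $\tilde G_{*,N}\otimes 1$ and $1\otimes\epsilon^{(2)}_N$ always occur summed together. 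Knowing that every generator of $\cB_m$ maps to such a sum does not show that $\tilde G_{r,N}\otimes 1$ or $1\otimes\epsilon^{(2)}_N$ individually lie in the image, hence does not show the image contains a generating set of the target. Your subsequent dimension count explicitly takes surjectivity as input, so it cannot repair this; with only the expansion in hand, the argument stalls precisely at the point you yourself call ``the crux.''

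The paper closes this hole with two devices absent from your proposal. First, a secondary induction on $N$: writing $\cB'_m$ for the subalgebra generated by the $G_{r,N}$, the inductive hypothesis $(S_{N'})$ for $N'<N$ puts every mixed component $\cB_{m-1,L}\otimes\cB_{1,N-L}$, $1\le L\le N-1$, inside $\pi_m^{m-1}(\cB'_m)$ (they are products of lower-degree generators, and the image is a subalgebra), so one only needs $\pi(G_{r,N})_{N,0}+\pi(G_{r,N})_{0,N}$ modulo the image---in particular the mixed components, which your plan would force you to compute and which need not have the simple product form you posit, never have to be evaluated. Second, a separation trick: in generating-series form $\pi_m^{m-1}\bigl(G_N(u)\bigr)_{N,0}=C(1-q_1^N/u)\,\tilde G_N(q_1^{N/(m-1)}u)\otimes 1$, while $\pi_m^{m-1}\bigl(G_N(u)\bigr)_{0,N}$ equals $1\otimes\epsilon^{(2)}_N$ times an explicit rational function of $u$; evaluating at $u=q_1^N$ annihilates the first term and, by genericity of $s$, not the second, which isolates $1\otimes\epsilon^{(2)}_N$ and then yields all coefficients of $\tilde G_N(u)\otimes 1$. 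Some such disentangling argument---and note this is a second place where genericity of $s$ enters, independently of the Gordon bound---is indispensable for surjectivity; once it is supplied, your dimension squeeze and the transfer of commutativity and polynomiality go through as you describe.
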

\begin{proof}
If $m=1$, then  $G^1_{0,N}=\epsilon^{(2)}_N$, 
where $\epsilon^{(2)}_N(x)=\epsilon_N(x;q_2)$. 
Hence the assertions are true by Theorem \ref{thm:FHHSY}. 
Let $m\ge2$, and assume
that (i)--(iii) hold for all $m'<m$. 
For brevity we write $G_{r,N}=G^m_{r,N}(q_1,q_2,q_3;s)$,
$\tilde{G}_{r,N}=G^{m-1}_{r,N}(\tilde{q}_1,q_2,\tilde{q}_3;\tilde{s})$,
$\cB_{m}=\cB_{m}(q_1,q_2,q_3;s)$,
$\cB_{m-1}=\cB_{m-1}(\tilde{q}_1,q_2,\tilde{q}_3;\tilde{s})$,
and 
$\cB_1=\cB_1(q_1q_3^{-m+1},q_2,q_3^{m})$.

Denote by $\cB'_{m}$ the subalgebra of $\cB_m$
generated by the elements $\{G_{r,N}\mid 0\leq r, N\geq 0\}$.
(We recall that $G_{m,N}$ is proportional to $G_{0,N}$ and $G_{r,N}=0$ 
for $r>m$.)  
We use induction on $N$ to prove the following statement:
\begin{align*}
(S_N):\quad &\quad 
\tilde{G}_{r,N}\otimes 1,\ 1\otimes \epsilon^{(2)}_N
\quad \in 
\pi_m^{m-1}\bigl(\cB'_m\bigr)\quad \text{for $r,N\ge0$}\,.
\end{align*}
If $N=0$, then there is nothing to prove.
Assume $(S_{N'})$ for $N'<N$. Then by assertion 
(i) for $m-1$ and  $(S_{N'})$, we have
\begin{align*}
&\cB_{m-1,N'}\otimes 1
=\langle \tilde{G}_{r,K}\otimes 1\mid 0\le r, \ K\le N'\rangle
\subset \pi_m^{m-1}\bigl(\cB'_m\bigr)\,\quad \text{for $N'<N$}.
\end{align*}
Likewise we have
\begin{align*}
&1\otimes \cB_{1,N'}=\langle 1\otimes\epsilon^{(2)}_K\mid \ K\le N'\rangle
\subset \pi_m^{m-1}\bigl(\cB'_m\bigr)\ \quad \text{for $N'<N$}.
\end{align*}
This implies that 
\begin{align*}
\cB_{m-1,L}\otimes\cB_{1,N-L} \subset \pi_m^{m-1}\bigl(\cB'_m\bigr)
\quad \text{for $1\le L\le N-1$}.
\end{align*}

For an element $X\in \cB_{m-1}\otimes \cB_1$, let 
$X_{L,N-L}\in \cB_{m-1,L}\otimes \cB_{1,N-L}$ 
stand for its tensor component. In view of the discussion above, we have
\begin{align*}
\pi_m^{m-1}\bigl(G_{r,N}\bigr)
\equiv \pi_m^{m-1}\bigl(G_{r,N}\bigr)_{N,0}+
\pi_m^{m-1}\bigl(G_{r,N}\bigr)_{0,N}
\quad\bmod  \pi_m^{m-1}\bigl(\cB'_m\bigr)\,.
\end{align*}
On the other hand, a simple computation shows that 
\begin{align*}
\pi_m^{m-1}\bigl(G_N(u)\bigr)_{N,0}
&=C(1-q_1^N/u) 
\cdot \tilde{G}_N(q_1^{N/(m-1)}u)
\otimes 1\,,
\\
\pi_m^{m-1}\bigl(G_N(u)\bigr)_{0,N}
&=C'
(1-q_3^{(m-1)N}/u)\prod_{i=1}^{m-1}(1-\bfs_i q_3^{-N}/u)
\cdot 1\otimes \epsilon_N^{(2)}\,.
\end{align*}
Here $C,C'$ are non-zero constants.
Setting $u=q_1^N$ we find 
$1\otimes \epsilon^{(2)}_N\in \pi_{m}^{m-1}\bigl(\cB'_{m,N}\bigr)$. 
In turn, this implies that
all coefficients of the polynomial $\tilde{G}_N(u)\otimes 1$
belong to $\pi_{m}^{m-1}\bigl(\cB'_{m,N}\bigr)$.
Thus we have proved $(S_N)$.

It follows that the restriction of $\pi_m^{m-1}$ to $\cB'_{m}$
is surjective. In particular we have 
$\dim \cB_{m,N}\ge \dim \cB'_{m,N}\ge \dim \cR_{m,N}$. 
Combining this with the upper estimate in Proposition \ref{prop:Gordon},
we conclude that (i),(ii) hold for $m$. 

Finally assertion (iii) for $m$ 
is a corollary of (ii) together with 
assertion (iii) for $m-1$.

This completes the proof of Theorem.
\end{proof}
\bigskip

\begin{rem}\label{rem:gap}
Statements (i), (iii) are due to \cite{FT} 
except for the following point. 
In the proof of Lemma 2.7, \cite{FT}, it is stated that 
if $F,G\in \cB_m(s)$ then $F*G-G*F$ becomes zero
after clearing the denominator and letting $x_{i,a}\to y_i$.
While true, this is a non-trivial statement that requires a proof. 
In order to circumvent this issue we adopt here a different approach
using fusion homomorphisms.\qed
\end{rem}

\bigskip

Finally we consider the case $m\ge n+1$ and $n\ge1$.

\begin{thm}\label{prop:surj_glmn}
Let $m\ge n+1$, $n \ge1$, and assume that $s$ is generic.

(i) The elements 
$\{G^{m|n}_{r,N}(q_1,q_2,q_3;s),
G^{*,m|n}_{r,N}(q_1,q_2,q_3;s)\mid 0\le r,\ N\ge 0\}$ 
generate
$\cB_{m|n}(q_1,q_2,q_3;s)$.

(ii) The restriction of $\pi_{m|n}^{m|n-1}$ 
to $\cB_{m|n}(q_1,q_2,q_3;s)$ gives an isomorphism
\begin{align*}
\pi_{m|n}^{m|n-1}:\cB_{m|n}(q_1,q_2,q_3;s)
\longrightarrow \cB_{m|n-1}(\bar{q}_1,q_2,\bar{q}_3;{s})
\otimes \cB_1(q_2q_3^{-m+n},q_2^{-1},q_3^{m-n})\,,
\end{align*}
where $\bar{q}_i$ are as in \eqref{qbar}. 

(iii) Algebra $\cB_{m|n}(q_1,q_2,q_3;s)$ is commutative, and 
is isomorphic to the polynomial algebra $\cR_{m|n}$.
\end{thm}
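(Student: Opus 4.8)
The plan is to argue by induction on $n$, using the fusion homomorphism $\pi_{m|n}^{m|n-1}$ of Proposition \ref{prop:fusion-glmn} to lower the rank. The base case $n=0$ is Theorem \ref{prop:surj_glm}, so assume $n\ge1$ and that assertions (i)--(iii) hold for $\cB_{m|n-1}(\bar q_1,q_2,\bar q_3;s)$; note that $m\ge n+1$ guarantees $m\neq n-1$ and that the fusion map is available. By Theorem \ref{thm:FHHSY} the factor $\cB_1(q_2q_3^{-m+n},q_2^{-1},q_3^{m-n})$ is commutative, free polynomial on the generators $\epsilon_N$, and isomorphic to $\cR_1$, so the target $\cB_{m|n-1}\otimes\cB_1$ is commutative and isomorphic to $\cR_{m|n-1}\otimes\cR_1=\cR_{m|n}$. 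Writing $\cB'_{m|n}$ for the subalgebra generated by $\{G^{m|n}_{r,N}(s),G^{*,m|n}_{r,N}(s)\}$, the goal is to show that $\pi_{m|n}^{m|n-1}$ maps $\cB'_{m|n}$ into $\cB_{m|n-1}\otimes\cB_1$ and surjects onto it; the three assertions then follow by comparison with the upper bound of Proposition \ref{prop:Gordon}. As in the purely bosonic case (cf.\ Remark \ref{rem:gap}), this routes around a direct verification of commutativity.

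The inclusion $\pi_{m|n}^{m|n-1}(\cB'_{m|n})\subseteq\cB_{m|n-1}\otimes\cB_1$ is checked on the generators. The core step is an explicit evaluation of the two extreme tensor components of the image of the generating series \eqref{genG}. Fixing $N$, I would prove a statement $(S_N)$ by induction on $N$: that $\overline{G}_{r,N}\otimes1$, $\overline{G}^{*}_{r,N}\otimes1$ and $1\otimes\epsilon_N$ all lie in $\pi_{m|n}^{m|n-1}(\cB'_{m|n})$, where $\overline{G}_{r,N},\overline{G}^{*}_{r,N}$ denote the generators of $\cB_{m|n-1}$. Assuming $(S_{N'})$ for $N'<N$, everything in $\cB_{m|n-1,L}\otimes\cB_{1,N-L}$ with $1\le L\le N-1$ is already available, so modulo these one reduces to the $(N,0)$ and $(0,N)$ slots of $\pi_{m|n}^{m|n-1}(G^{m|n}_N(u;s))$. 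I expect these to take the form
\begin{align*}
\pi_{m|n}^{m|n-1}\bigl(G^{m|n}_N(u;s)\bigr)_{N,0}
&=C\,(1-q_1^N/u)\,\overline{G}_N(\kappa u)\otimes1\,,\\
\pi_{m|n}^{m|n-1}\bigl(G^{m|n}_N(u;s)\bigr)_{0,N}
&=C'\,\phi(u)\cdot 1\otimes\epsilon_N\,,
\end{align*}
with non-zero constants $C,C'$, a rescaling $\kappa$ built from $q_1^{1/(m-n+1)}$, and an explicit polynomial $\phi(u)$ in $1/u$ whose roots come from the factors $A,B,C$ of the fusion Lemma and from the asymptotics of $J_{m+n}$. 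Specialising $u$ at a suitable value isolates $1\otimes\epsilon_N$, after which the coefficients of $\overline{G}_N(u)\otimes1$ follow. The covector series $G^{*,m|n}_N(u;s)$, whose definition mirrors $G$ under the cyclic symmetry \eqref{cyclmn}, is treated by a parallel computation and yields $\overline{G}^{*}_N(u)\otimes1$ together with $1\otimes\epsilon_N$. Since by induction $\overline{G}_{r,N}$ and $\overline{G}^{*}_{r,N}$ generate $\cB_{m|n-1}$, statement $(S_N)$ gives surjectivity of $\pi_{m|n}^{m|n-1}|_{\cB'_{m|n}}$ onto $\cB_{m|n-1}\otimes\cB_1$.

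To close, combine the lower bound
\[
\dim\cB_{m|n,N}(s)\ \ge\ \dim\cB'_{m|n,N}(s)\ \ge\ \dim\bigl(\cB_{m|n-1}\otimes\cB_1\bigr)_N=\dim\cR_{m|n,N}
\]
with the upper bound $\dim\cB_{m|n,N}(s)\le\dim\cR_{m|n,N}$ of Proposition \ref{prop:Gordon}. Equality throughout forces $\cB'_{m|n}=\cB_{m|n}$, which is (i), and makes the surjection $\pi_{m|n}^{m|n-1}\colon\cB_{m|n}\to\cB_{m|n-1}\otimes\cB_1$ an isomorphism of graded algebras, which is (ii). Assertion (iii) is then immediate, since $\cB_{m|n}\cong\cB_{m|n-1}\otimes\cB_1\cong\cR_{m|n}$ is a free commutative polynomial algebra.

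The main obstacle is the explicit factorization behind $(S_N)$: evaluating $\pi_{m|n}^{m|n-1}$ on $G^{m|n}_N(u;s)$ and $G^{*,m|n}_N(u;s)$ and extracting the $(N,0)$ and $(0,N)$ components in closed form. This is where the fermionic node $m+n$ enters non-trivially, since $J_{m+n}(u;s)$ carries the function $\tilde{I}_{N,N}(dx_{m+n-1},x_{m+n})$ and the skew-symmetry of the fermionic variables, so the clean product factorization used in \cite{FT} must be replaced by an analysis resting on the identities of Lemmas \ref{lem:Ic-prop}--\ref{lem:Iwheel} and the asymptotics of Lemma \ref{lem:asy-Ic}. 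Matching the constants $C,C'$ and the rescaling $\kappa$ against the parameter shift $\bar q_1,\bar q_3$ of \eqref{qbar} is the delicate bookkeeping on which the whole induction rests.
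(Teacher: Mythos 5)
Your proposal follows the paper's proof route exactly: induction on $n$ with Theorem \ref{prop:surj_glm} as the base, the inner induction $(S_N)$ extracting generators of $\cB_{m|n-1}\otimes 1$ and $1\otimes\cB_1$ from the extreme tensor components of the generating series \eqref{genG}, and finally the comparison of the resulting lower bound with the Gordon upper bound of Proposition \ref{prop:Gordon}. The one place where your plan needs correction is the guessed closed forms, and the correction changes how the extraction step must be organized. The paper's computation gives
\begin{align*}
\pi_{m|n}^{m|n-1}\bigl(G_N(u)\bigr)_{N,0}
&=C_1\,\frac{1}{1-\bfs_{m+n}q_1^N/u}\,\bar{G}_N(q_1^{-N/(m-n+1)}u)\otimes 1\,,
\\
\pi_{m|n}^{m|n-1}\bigl(G_N(u)\bigr)_{0,N}
&=C_2\,\frac{\prod_{i=1}^m(1-\bfs_i/(q_3^Nu))}{\prod_{i=m+1}^{m+n-1}(1-\bfs_i/(q_3^Nu))}
\sum_{l=0}^N\frac{C_{N,l}}{1-\bfs_{m+n}/(q_2^lq_3^Nu)}\,
1\otimes\bigl(\epsilon^{(1)}_{N-l}*\epsilon^{(3)}_{l}\bigr)\,,
\end{align*}
where $\epsilon^{(1)},\epsilon^{(3)}$ are the $\epsilon$-generators of $\cB_1(q_2q_3^{-m+n},q_2^{-1},q_3^{m-n})$ attached to its first and third parameters. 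So the $(N,0)$ slot of $G_N(u)$ carries a pole in $u$, not a zero, and the $(0,N)$ slot is \emph{not} proportional to a single $\epsilon_N$: it is a combination of shuffle products of $\cB_1$-elements taken with respect to two different parameters. Consequently your step ``specialising $u$ at a suitable value isolates $1\otimes\epsilon_N$'' cannot be run on $G_N(u)$ as you describe.

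The repair is to lead with the covector series, for which the structure is the clean one you anticipated: $\pi_{m|n}^{m|n-1}(G^*_N(u))_{N,0}$ contains the zero factor $(1-\bfs_{m+n}q_1^{-N}/u)$, while $\pi_{m|n}^{m|n-1}(G^*_N(u))_{0,N}$ is a rational multiple of $1\otimes\epsilon^{(2)}_N$. Specialising $u=\bfs_{m+n}q_1^{-N}$ (legitimate for generic $s$) kills the $(N,0)$ slot and places $1\otimes\epsilon^{(2)}_N$ in the image. Since by Theorem \ref{thm:FHHSY} $\cB_1$ is the free polynomial algebra on the $\epsilon^{(2)}_K$, this yields all of $1\otimes\cB_{1,N}$, in particular the terms $1\otimes(\epsilon^{(1)}_{N-l}*\epsilon^{(3)}_l)$ appearing above. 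Only then are the $(0,N)$ slots of \emph{both} series absorbed modulo the image, after which the $(N,0)$ slots deliver the coefficients $\bar{G}_{r,N}\otimes 1$ and $\bar{G}^*_{r,N}\otimes 1$, completing $(S_N)$. (Alternatively one can take residues of $\pi_{m|n}^{m|n-1}(G_N(u))$ at the mutually distinct poles $u=\bfs_{m+n}/(q_2^lq_3^N)$ and invoke the fact that any one of the three families $\epsilon^{(i)}$ generates $\cB_1$; either way, more than the single specialization you propose is needed.) With this reordering your surjectivity argument and the dimension comparison go through exactly as in the paper.
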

\begin{proof}
The proof is quite similar to that of Theorem \ref{prop:surj_glm}.
We use the latter as a base of induction
and the following formulas:
\begin{align*}
&\pi_{m|n}^{m|n-1}\bigl(G_N(u)\bigr)_{N,0}
=C_1\cdot
\frac{1}{1-\bfs_{m+n}{q}_1^N/u}\cdot
\bar{G}_N(q_1^{-N/(m-n+1)} 
u) \otimes 1\,,
\\
&\pi_{m|n}^{m|n-1}\bigl(G_N(u)\bigr)_{0,N}
=C_2\frac{\prod_{i=1}^m(1-\bfs_i/(q_3^Nu))}
{\prod_{i=m+1}^{m+n-1}(1-\bfs_i/(q_3^Nu))}
\sum_{l=0}^N\frac{C_{N,l}}{1-\bfs_{m+n}/(q_2^lq_3^Nu)}
1\otimes \bigl(\epsilon^{(1)}_{N-l}*\epsilon^{(3)}_{l}\bigr)\,,
\\
&\pi_{m|n}^{m|n-1}\bigl(G^*_N(u)\bigr)_{N,0}
=C_3\ (1-\bfs_{m+n}q_1^{-N}/u)\cdot \bar{G}^*_N(
q_1^{N/(m-n+1)}  
u)\otimes 1\,,
\\
&\pi_{m|n}^{m|n-1}\bigl(G^*_N(u)\bigr)_{0,N}
=C_4\ \frac{\prod_{i=m+1}^{m+n-1}(1-\bfs_i q_3^N/u)
\cdot (1-\bfs_{m+n}q_3^{(m-n+1)N}/u)}
{\prod_{i=1}^m(1-\bfs_i q_3^N/u)}\cdot 1\otimes \epsilon^{(2)}_N\,.
\end{align*}
Here 
$\epsilon^{(1)}_N(z)=\epsilon_N(z;q_2q_3^{-m+n})$,
$\epsilon^{(3)}_N(z)=\epsilon_N(z;q_3^{m-n})$,  and 
$C_1,\ldots,C_4$, $C_{N,l}$ are non-zero constants.

\end{proof}

\appendix

\section{Trace construction}

In this Appendix we discuss the relation between 
the subalgebra $\cB_{m|n}(s)$ of $Sh_{m|n}$
and the commutative subalgebra
of the quantum toroidal algebra $\E_{m|n}$ of type $\mathfrak{gl}_{m|n}$.

Algebra $\E_{m|n}$ has generators $E_{i,k},F_{i,k}$, $K_{i,0}^{\pm1}$, $H_{i,r}$, where
$i\in \Z/(m+n)\Z$, $k\in\Z$, $r\in \Z\backslash\{0\}$, and an invertible central element $C$. 
The defining relations are presented in terms of the generating series
$E_i(z)=\sum_k E_{i,k}z^{-k}$, $F_i(z)=\sum_k F_{i,k}z^{-k}$, 
$K^{\pm}_i(z)=K_{i,0}^{\pm1}\exp\bigl(\pm (q-q^{-1})\sum_{\pm r>0}H_{i,r}z^{-r}\bigr)$. 
For the details we refer the reader to \cite{BM1}. 
We note only that the quadratic relations for the currents
$E_i(z),F_i(z)$ can be written as 
\begin{align*}
&\omega_{i,j}(z,w)E_i(z)E_j(w)=(-1)^{|i||j|}\omega_{j,i}(w,z)E_j(w)E_i(z)\,, 
\\
&(-1)^{|i||j|}\omega_{j,i}(w,z)F_i(z)F_j(w)
=\omega_{i,j}(z,w)F_j(w)F_i(z)\,, 
\end{align*}
where $\omega_{i,j}(z,w)$ are the structure functions of $Sh_{m|n}$.
We use also the Hopf algebra structure of $\E_{m|n}$ given by the Drinfeld coproduct, see 
\cite{BM1}, Section 2.4.

\subsection{The case $\E_m$}\label{ap1 sec}

First we restrict to  $\E_m=\E_{m|0}$, 
and collect known facts following \cite{FT}. 

Let $\E^+_m$ (resp. $\E^-_m$) be the subalgebra of $\E_m$
generated by $\{E_{i,k}\}$ (resp. $\{F_{i,k}\}$).
There is an anti-isomorphism of algebras $\E^+_m\to \E^-_m$ sending 
$E_{i,k}$ to $F_{i,k}$. 
These subalgebras have realization as shuffle algebras due to \cite{N2}. 
Namely we have an isomorphism of algebras 
\begin{align*}
\Psi^\pm: \E_m^\pm \longrightarrow Sh^\pm_m
\end{align*}
sending $E_{i,k}$ (resp. $F_{i,k}$) to the element $x_i^k\in Sh_{m,{\bf 1}_i}$, 
where $Sh_m^-=Sh_m$, and $Sh_m^+=(Sh_m)^{op}$ is the 
opposite algebra 
defined by the structure functions $\omega^{op}_{i,j}(z,w)=\omega_{j,i}(w,z)$.
Note that the zeroes of $\omega^{op}_{i,j}(z,w)$ 
are obtained from those of $\omega_{i,j}(z,w)$ by the interchange 
$q_1\leftrightarrow q_3^{-1}$, which leaves the wheel conditions unchanged.  

Let $\cB_m^+$ (resp. $\cB_m^-$) be the Hopf subalgebra of $\E_m$
generated by $E_{i,k}, K_{i,0}^{\pm1}, H_{i,-r}$ and $C$ 
(resp. $F_{i,k}$, $K_{i,0}^{\pm1}$, $H_{i,r}$ and $C$), 
$i\in\Z/m\Z$, $k\in\Z$, $r>0$. There is a non-degenerate Hopf pairing
\begin{align*}
\langle~, ~\rangle : & \cB^+\times \cB^- \longrightarrow \C
\end{align*}
given in \cite{FT}, Theorem 1.7.
The isomorophism $\Psi^+$ can be described explicitly in terms of this pairing.
To this aim, introduce the notation 
\begin{align*}
F_{i_1,\ldots,i_k}(z_1,\ldots,z_k)
=\prod_{1\le a<b\le k}\omega_{i_b,i_a}(z_b,z_a)\cdot
F_{i_1}(z_1)\cdots F_{i_k}(z_k)\,. 
\end{align*}
The quadratic relations for $F_i(z)$ imply the symmetry
\begin{align*}
F_{i_{\sigma(1)},\ldots,i_{\sigma(k)}}(z_{\sigma(1)},\ldots,z_{\sigma(k)}) 
=F_{i_1,\ldots,i_k}(z_1,\ldots,z_k)\quad 
\text{for $\sigma\in\mathfrak{S}_k$}\,.
\end{align*}
We have then (see \cite{FT}, Lemma 3.9)
\begin{align}
(q-q^{-1})^N
\Psi^+(X)(x_1,\ldots,x_m)
=\langle X, F_{1^{N_1},\ldots,m^{N_m}}(x_1,\ldots,x_m)\rangle 
\quad \text{for $X\in\E^+_{m,N_1,\ldots,N_m}$}\,, 
\label{repro}
\end{align}
where $N=\sum_{i=1}^mN_i$, 
$i^{N_i}=i,\ldots,i$ ($N_i$ times) and
$x_i=(x_{i,1},\ldots,x_{i,N_i})$, $1\le i\le m$. 

A standard way to construct commutative subalgebras in $\E_m$ is to 
consider transfer matrices, namely appropriate traces of the universal 
$R$-matrix $\cR$. 
More specifically, consider the 
Fock modules $\F_r$, $r=0,1,\ldots,m$, of $\E_m$. 
These modules are graded, $\F_r=\bigoplus_{n=-\infty}^0 \F_{r,n}$,
the 
top degree subspace $\F_{r,0}$ being the $r$-th fundamental representation of 
$U_q\mathfrak{gl}_m$. Fixing parameters 
$u_i\in\C^{\times}$,   
$1\le i\le m-1$, 
consider the weighted trace 
\begin{align}
(\id\otimes \phi_r)\cR\,, 
\quad 
\phi_{r}(Y)=\Tr_{\F_{r}}
\Bigl(P_0\prod_{i=1}^{m-1}
u_i^{-\bar{\Lambda}_i}\  
Y\Bigr).
\label{TrR}
\end{align}
Here $P_0:\F_r\to \F_{r,0}$ stands for the projection, 
and $\bar{\Lambda}_i$ are the fundamental weights of $\mathfrak{gl}_m$. 
The universal $R$-matrix
$\cR$ is an element of the completed tensor product
$\cB^+_m\otimes\cB^-_m$. 
Due to the Yang-Baxter equation, the elements $(\id\otimes \phi_r)\cR$, 
$r=0,\ldots,m$, 
mutually commute.

The functional $\phi_r:\cB^-\to \C$ annihilates 
$H_{i,r}$, $r>0$. Moreover the dependence 
of $(\id\otimes \phi_r)\cR$ 
on $K_{i,0}$ is such that it
is a linear combination of elements
of the form $X f(s_1,\ldots,s_{m})$, where 
$X\in\E^+_m$ has weight $0$ and 
$f$ is a function in $s_i=u_i K^{-1}_{i,0}$.
Since $s_i$'s commute with $X$,
 \eqref{TrR} is essentially an element of $\E^+_m$
in which $s_i$'s enter as parameters.

On the other hand, since $\cR$ is the canonical element we have
\begin{align}
\langle (\id\otimes \phi_r)\cR, Y \rangle =\phi_r(Y)\quad 
\text{for $Y \in \cB^-_m$}. 
\label{canR}
\end{align}
Using \eqref{canR} and \eqref{repro} 
one can rewrite \eqref{TrR} 
as commutative elements in the shuffle algebra $Sh_m$.

Thus we conclude:
\begin{prop}
The collection of elements
\begin{align*}
\prod_{i=1}^m\prod_{1\le a<b\le N}\omega_{i,i}(x_{i,b},x_{i,a})\prod_{1\le i<j\le m}\prod_{1\le a,b\le N}
\omega_{j,i}(x_{j,b},x_{i,a})\cdot 
\phi_r\Bigl(\prod_{1\le i\le m}^{\curvearrowright}\prod_{1\le a\le N}^{\curvearrowright}F_i(x_{i,a})
\Bigr), 
\end{align*}
where $0\le r\le m$, $N\ge0$, constitutes a commutative family in $Sh_m$.
\end{prop}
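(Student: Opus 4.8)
The plan is to transport the commutativity of the transfer matrices from $\E_m^+$ to the shuffle algebra through the isomorphism $\Psi^+$, and then to identify the resulting functions with the displayed elements by means of the reproduction formula \eqref{repro} and the canonical element property \eqref{canR}. The starting point is that the elements $t_r=(\id\otimes\phi_r)\cR$, $0\le r\le m$, mutually commute; this is the standard consequence of the Yang--Baxter equation for the universal $R$-matrix $\cR$ together with the cyclicity of the weighted trace $\phi_r$ in \eqref{TrR}.

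Next I would argue that each $t_r$ is, grading component by grading component, a weight-zero element of $\E_m^+$ in which the $s_i$ enter only as scalar coefficients. Since $\cR$ lies in the completed tensor product $\cB_m^+\otimes\cB_m^-$, and $\phi_r$ annihilates $H_{i,r}$ for $r>0$ while converting the $K_{i,0}$-dependence into functions of $s_i=u_iK_{i,0}^{-1}$, the first tensor factor contributes an element of $\E_m^+$ and the second is evaluated to a scalar function of the $s_i$. The trace over the graded Fock module $\F_r$ forces the toroidal weight to vanish, so only the pieces of degree $(N,\dots,N)$ survive; I write $t_{r,N}\in\E^+_{m,N,\dots,N}$ for this component.

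Then I would apply $\Psi^+$. Because $\Psi^+\colon\E_m^+\to Sh_m^+=(Sh_m)^{op}$ is an isomorphism of algebras, and because commutativity of a family of elements is unaffected by passing to the opposite algebra, the functions $\Psi^+(t_{r,N})$ form a commutative family in $Sh_m$. To identify them explicitly, take $X=t_{r,N}$ in \eqref{repro} and $Y=F_{1^N,\dots,m^N}(x)$ in \eqref{canR}, which gives
\[
(q-q^{-1})^{mN}\,\Psi^+(t_{r,N})(x_1;\cdots;x_m)
=\phi_r\bigl(F_{1^N,\dots,m^N}(x_1;\cdots;x_m)\bigr).
\]
Expanding $F_{1^N,\dots,m^N}$ by its definition, the scalar product $\prod_{a<b}\omega_{i_b,i_a}(z_b,z_a)$ over the ordering $(1^N,\dots,m^N)$ factors into the same-color part $\prod_i\prod_{a<b}\omega_{i,i}(x_{i,b},x_{i,a})$ and the cross-color part $\prod_{i<j}\prod_{a,b}\omega_{j,i}(x_{j,b},x_{i,a})$; these scalars pull out of $\phi_r$, and the remaining ordered product of currents is exactly $\prod^{\curvearrowright}_{1\le i\le m}\prod^{\curvearrowright}_{1\le a\le N}F_i(x_{i,a})$. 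Thus $\Psi^+(t_{r,N})$ coincides, up to the nonzero scalar $(q-q^{-1})^{mN}$, with the element displayed in the Proposition, and commutativity follows.

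The hard part will be the second step: one must make rigorous the passage from the formal trace $(\id\otimes\phi_r)\cR$, which a priori lives only in a completion of $\cB_m^+\otimes\cB_m^-$, to a genuine weight-zero element of $\E_m^+$ in each finite graded component, and check that the commutation relations persist after this extraction rather than merely in the completion. Careful bookkeeping of the opposite-algebra structure carried by $\Psi^+$ and of the scalar parameters $s_i$, which commute with everything, is also needed to make the final identification precise.
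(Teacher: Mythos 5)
Your proposal follows essentially the same route as the paper: the paper states this proposition as the conclusion ("Thus we conclude") of exactly the argument you give — commutativity of the transfer matrices $(\id\otimes\phi_r)\cR$ via Yang--Baxter, reduction to weight-zero elements of $\E_m^+$ with the $s_i$ entering as parameters, and identification in $Sh_m$ by combining the canonical-element property \eqref{canR} with the reproducing formula \eqref{repro} and expanding $F_{1^N,\ldots,m^N}$. Your closing caveat about the completion is fair, but the paper glosses over the same point ("is essentially an element of $\E_m^+$"), so there is no substantive divergence.
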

Using the explicit realization of $F_i(z)$ by vertex operators, 
one checks that these elements coincide with $G^m_{r,N}(s)$ up to a scalar multiple, with the identification $s_i=u_i K^{-1}_{i,0}$.

\subsection{The case $\E_{2|1}$}

In the previous section, we explained the 
scheme for constructing commutative elements in $Sh_m$. 
We expect that the same method carries over to the case $Sh_{m|n}$ with $m\neq n$. 
In place of $\F_r$ we use admissible modules $\F_\Lambda$ 
which have a realization in terms of free fields \cite{BM1}. A new feature is that,
for $i$ beyond the equator , the currents $E_i(z),F_i(z)$ are no longer a single 
vertex operator but a sum of them. 

As an example, consider the case $m=2,n=1$. 
(In the following we quote relevant formulas from \cite{BM1}.
For the unexplained notation we refer the reader to Section 3 thereof.)
The $F_i$ currents have the form
\begin{align*}
F_1(z)=\Gamma_1^-(z),\quad F_2(z)=:\Gamma_2^-(z)\partial_z\bigl[C^-_2(d^2z)\bigr]:,
\quad
F_0(z)=d:\Gamma^-_0(z)C^+_2(dz):\,,
\end{align*}
where $\Gamma^-_i(z)$, $C^\pm_i(z)$ are certain vertex operators, and 
$\partial_z$ signifies the $q$-difference operator
\begin{align*}
\partial_zf(z)=\frac{f(qz)-f(q^{-1}z)}{(q-q^{-1})z}\,. 
\end{align*}
Notice that $F_2(z)$ is a sum of two vertex operators.

In the simplest case $\Lambda=0$, 
the top degree subspace $\F_{\Lambda,0}$ is one dimensional with basis $v_0=1$.
The weighted trace is simply the action of $\prod_{1\le i\le 3}^{\curvearrowright}\prod_{1\le a\le N}^{\curvearrowright}F_i(x_{i,a})$.
We find 
\begin{align*}
P^{2|1}_N \tilde{I}^1_{N,N}(dx_2,x_3)=G^{2|1}_{0,N}\,. 
\end{align*}
The appearance of the functions $I^c_{M,N}$ in section \ref{sec:IcN}
is due to the presence of $\partial_z$ in $F_2(z)$.
 
For $\Lambda=\bar{\Lambda}_1$, $\F_{\bar{\Lambda}_1,0}$ is three dimensional
with basis $v_1=e^{\bar{\Lambda}_1}$, $v_2=e^{-\bar{\alpha}_1}e^{\bar{\Lambda}_1}$,
and $v_3=c_{2,-1}e^{-\bar{\alpha}_1-\bar{\alpha}_2-c_2}e^{\bar{\Lambda}_1}$.
Taking trace we obtain
\begin{align*}
P^{2|1}_N \Bigl(\frac{\bar{x}_3}{\bar{x}_1}\tilde{I}^1_{N,N}(dx_2,x_3)
+s_1\frac{\bar{x}_1}{\bar{x}_2}\tilde{I}^1_{N,N}(dx_2,x_3)
-s_1s_2 d^N \frac{\bar{x}_2}{\bar{x}_3}\tilde{I}^2_{N,N}(dx_2,x_3)
\Bigr)
=G^{2|1}_{1,N}(s)\,.
\end{align*}
Here, computing the action on $v_3$ we use further the identity
\begin{align}\label{identity}
\frac{1}{\Delta(y)}\prod_{a=1}^N\frac{q T_{y_a,q}-q^{-1}T_{q,y_a}^{-1}}{q-q^{-1}}
\Bigl[\frac{\Delta(y)}{\Pi(y,z)}\Bigl(1-\sum_{a,b}(y_a-z_a)(y_b^{-1}-z_b^{-1})\Bigr)\Bigr]
=\frac{\bar{y}}{\bar{z}}I^2_{N,N}(y,z)\,.
\end{align}

Further examples suggest 
the formula for $G^{m|n}_{r,N}(s)$ in Section \ref{sec:GrN}. 
We expect that these functions coincide with 
weighted  traces of products of $F_i(z)$ given by free fields. 
To verify this in general, we would need a host of identities 
of the type given above. 
For that reason we have chosen to prove the commutativity of 
$G^{m|n}_{r,N}(s)$ directly. 
\bigskip

\bigskip

{\bf Acknowledgments.\ }
MJ is partially supported by 
JSPS KAKENHI Grant Number JP19K03549. 
EM is partially supported by Simons Foundation grant number \#709444.

BF thanks Jerusalem University for hospitality. EM thanks Rikkyo University, where a part of this work was done, for hospitality.

\end{document}